\newtheorem{corollary}{Corollary}
\newtheorem{proposition}{Proposition}
\newtheorem{lemma}{Lemma}
\newtheorem{theorem}{Theorem}
\newtheorem{definition}{Definition}
\newtheorem{remark}{Remark}
\newtheorem{problem}{Problem}
\newtheorem{conjecture}{Conjecture}
\long\def\symbolfootnote[#1]#2{\begingroup%
\def\thefootnote{\fnsymbol{footnote}}\footnote[#1]{#2}\endgroup}
\begin{document}

\title{Low-dimensional representations of matrix groups and group actions on
\textrm{CAT(0)} spaces and manifolds}
\author{Shengkui Ye \\
%EndAName
National University of Singapore}
\maketitle

\begin{abstract}
We study low-dimensional representations of matrix groups over general
rings, by considering group actions on \textrm{CAT(0)} spaces, spheres and
acyclic manifolds.
\end{abstract}

\section{Introduction\label{section1}}

Low-dimensional representations are studied by many authors, such as
Guralnick and Tiep \cite{gt} (for matrix groups over fields), Potapchik and
Rapinchuk \cite{pr} (for automorphism group of free group), Dokovi\'{c}
and Platonov \cite{dp} (for $\mathrm{Aut}(F_{2})$), Weinberger \cite{sw1}
(for $\mathrm{SL}_{n}(\mathbb{Z})$) and so on. In this article, we study
low-dimensional representations of matrix groups over general rings. Let $R$
be an associative ring with identity and $E_{n}(R)$ $(n\geq 3)$ the group
generated by elementary matrices (cf. Section \ref{sec}). As motivation, we
can consider the following problem.

\begin{problem}
\label{prob}For $n\geq 3,$ is there any nontrivial group homomorphism $%
E_{n}(R)\rightarrow E_{n-1}(R)?$
\end{problem}

Although this is a purely algebraic problem, in general it seems hard to
give an answer in an algebraic way. In this article, we try to answer
Problem \ref{prob} negatively from the point of view of geometric group
theory. The idea is to find a good geometric object on which $E_{n-1}(R)$
acts naturally and nontrivially while $E_{n}(R)$ can only act in a special
way. We study matrix group actions on \textrm{CAT(0)} spaces, spheres and
acyclic manifolds. We prove that for low-dimensional \textrm{CAT(0)} spaces,
a matrix group action always has a global fixed point (cf. Theorem \ref{th3}%
) and that for low-dimensional spheres and acyclic manifolds, a matrix group
action is always trivial (cf. Theorem \ref{th1}). Based on these results, we
show that the low-dimensional representation of matrix groups are quite
constrained (cf. Corollary \ref{correp}) and give a negative answer to
Problem \ref{prob} for some rings $R$ (cf. Corollary \ref{repcor}).
Moreover, these results give generalizations of a result of Farb \cite{fa}
concerning Chevalley groups over commutative rings acting on \textrm{CAT(0)}
spaces and that of Bridson and Vogtmann \cite{BV}, Parwani \cite{Pa} and
Zimmermann \cite{z} concerning the special linear groups $\mathrm{SL}_{n}(%
\mathbb{Z})$ and symplectic groups $\mathrm{Sp}_{2n}(\mathbb{Z})$ acting on
spheres and acyclic manifolds.

We now consider group actions on $\mathrm{CAT(0)}$ spaces. A group $G$ has
Serre's property $\mathrm{FA}$ if any $G$-action on any simplicial tree $T$
has a global fixed point. Recall from Farb \cite{fa} that for an integer $%
n\geq 1$, a group $G$ is said to have \textit{property} $\mathrm{FA}_{n}$ if
any isometric $G$-action on any $n$-dimensional $\mathrm{CAT(0)}$ cell
complex $X$ has a global fixed point. The property $\mathrm{FA}_{1}$ is
Serre's property $\mathrm{FA}.$ If a group $G$ has property $\mathrm{FA}_{n}$
then it has property $\mathrm{FA}_{m}$ for all $m<n.$ Farb \cite{fa} proves
that when a reduced, irreducible root system $\Phi $ has rank $r\geq 2$ and $%
R$ is a finitely generated commutative ring, the elementary subgroup $E(\Phi
,R)$ of Chevalley group $G(\Phi ,R)$ has property $\mathrm{FA}_{r-1}.$ This
gives a generalization of a result obtained by Fukunaga \cite{fu} concerning
groups acting on trees$.$ The group actions on \textrm{CAT(0) }spaces and
property $\mathrm{FA}_{n}$ have also been studied by some other authors. For
example, Bridson \cite{bdm, brid} proves that the mapping class group of a
closed orientable surface of genus $g$ has property $\mathrm{FA}_{g-1}.$ The
group action on \textrm{CAT(0) }spaces of\textrm{\ }automorphism groups of
free groups is studied by Bridson \cite{brid2}. Barnhill \cite{bar}
considers the property $\mathrm{FA}_{n}$ for Coxeter groups.

In this article, we prove the property $\mathrm{FA}_{n}$ for matrix groups
over any ring (not necessary commutative). Without otherwise stated we
assume that a ring is an associative ring with identity. Let $R$ be such a
ring and $n\geq 3$ an integer. Recall the definition of the elementary group
$E_{n}(R)$ generated by elementary matrices and the unitary elementary group
$EU_{2n}(R,\Lambda )$ generated by elementary unitary matrices from Section %
\ref{sec}. When $R$ is a ring of integers in a number field and $n\geq 3,$
the group $E_{n}(R)$ is the special linear group $\mathrm{SL}_{n}(R).$ For
different choices of parameters $\Lambda ,$ the group $EU_{2n}(R,\Lambda )$
contains as special cases the elementary symplectic groups, the elementary
orthogonal groups and the elementary unitary groups.

Watatani \cite{aw} proves that a group with Kazhdan's property $(T)$ has
Serre's property $\mathrm{FA.}$ Ershov and Jaikin-Zapirain \cite{ej} proves
that for a general ring $R$ and an integer $n\geq 3,$ the elementary group $%
E_{n}(R)$ has Kazhdan's property $(T)$. It follows that $E_{n}(R)$ has
Serre's property $\mathrm{FA.}$ Our first result is the following.

\begin{theorem}
\label{th3}Let $R$ be any finitely generated ring and $n\geq 3$ an integer.
Suppose that $E_{n}(R)$ (\textsl{resp.,} $EU_{2n}(R,\Lambda )$) is the
matrix group generated by all elementary matrices (\textsl{resp.,}
elementary unitary matrices). Then the group $E_{n}(R)$ (\textsl{resp.,} $%
EU_{2n}(R,\Lambda ))$ has property $\mathrm{FA}_{n-2}\ $(\textsl{resp., }$%
\mathrm{FA}_{n-1}$).
\end{theorem}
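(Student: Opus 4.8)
The plan is to follow the strategy of Farb \cite{fa}, whose engine is a Helly-type theorem for \textrm{CAT(0)} spaces: in a complete \textrm{CAT(0)} space $Y$ of dimension $\le d$, a finite family of nonempty closed convex subsets all of whose subfamilies of size $\le d+1$ have a common point must have a global common point. I would combine this with two elementary facts about an isometric action on a complete \textrm{CAT(0)} space $X$: for any subgroup $H$ the fixed set $\mathrm{Fix}(H)$ is closed and convex, hence itself a complete \textrm{CAT(0)} space of dimension $\le \dim X$; and $\mathrm{Fix}(\langle H_i\rangle)=\bigcap_i \mathrm{Fix}(H_i)$. Thus, to prove that $E_n(R)$ has $\mathrm{FA}_{n-2}$ it suffices to realize $E_n(R)$ as generated by a finite family $H_1,\dots,H_k$ of subgroups such that every subfamily of size $\le \dim X+1\le n-1$ fixes a point of $X$; Helly then forces a global fixed point, and the unitary case is handled by the same mechanism.

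The base of the induction is where property $(T)$ enters. By Ershov and Jaikin-Zapirain \cite{ej}, for a finitely generated ring $R$ and $n\ge 3$ the group $E_n(R)$ has Kazhdan's property $(T)$, and by Watatani \cite{aw} property $(T)$ implies Serre's property $\mathrm{FA}=\mathrm{FA}_1$. This settles the case $n=3$ at once, since there $\mathrm{FA}_{n-2}=\mathrm{FA}_1$, and it also supplies fixed points for the rank-two (type $A_2$) building blocks $\cong E_3(R)$. I would then induct on $n$, taking as the Helly family the standard lower-rank subgroups $H_i\cong E_{n-1}(R)$ obtained by deleting the $i$-th coordinate, of which there are $n$, matching the Helly threshold $(\dim X)+2=n$ when $X$ attains the full dimension $n-2$, and feeding the inductive hypothesis $\mathrm{FA}_{n-3}$ for $E_{n-1}(R)$ into the verification of the required subfamily intersections. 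The unitary case runs in parallel: $EU_{2n}(R,\Lambda)$ is the elementary group of a rank-$n$ situation, so Farb's bound predicts $\mathrm{FA}_{n-1}$, one index higher than the linear case, the extra index coming from the additional roots, and the same Helly machine applies with the elementary unitary matrices in place of the $e_{ij}(r)$.

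The hard part is the dimension bookkeeping in the inductive step: verifying that every admissible subfamily really fixes a point of the full $(n-2)$-dimensional complex, even though each building block only carries the weaker property $\mathrm{FA}_{n-3}$. The naive version is circular, because of the Steinberg relation $[e_{ik}(a),e_{kj}(b)]=e_{ij}(ab)$: any three of the symmetric copies $H_i$ already regenerate all of $E_n(R)$, so one cannot conclude that an $(n-1)$-fold subfamily fixes a point without presupposing the theorem. The resolution I would pursue is to exploit that each $\mathrm{Fix}(H_i)$ is a lower-dimensional \textrm{CAT(0)} subspace invariant under the subgroups normalizing $H_i$, pushing the intersection conditions down to genuinely lower-rank actions on lower-dimensional spaces where the inductive hypothesis does apply; equivalently, to organize the root subgroups so that the subfamilies entering Helly generate proper parabolic-type subgroups rather than the whole group, using the relative property $(T)$ of the pairs (whole group, abelian root subgroup) from \cite{ej} to endow the unipotent pieces with their fixed points. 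Making this reduction precise, so that the Helly number $\dim X+1$ is matched to the rank and no subfamily short-circuits the induction, is the crux; the same care yields the sharper index $n-1$ in the unitary case, and the finite generation of $R$ is used only to invoke \cite{ej}.
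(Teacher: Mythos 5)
You have correctly identified the engine (Farb's Helly-type fixed-point machinery), but not the input it needs, and the step you yourself flag as ``the crux'' is the entire content of the proof; moreover, your proposed family cannot be repaired. Beyond the circularity you acknowledge (any three of your $H_i\cong E_{n-1}(R)$ already generate $E_n(R)$), the strategy fails already at the singleton subfamilies: $X$ may be genuinely $(n-2)$-dimensional while $E_{n-1}(R)$ carries only $\mathrm{FA}_{n-3}$, so there is no a priori reason a single $H_i$ fixes any point of $X$. This is a real obstruction, not bookkeeping: for $R=\mathbb{Z}[1/p]$ the group $\mathrm{SL}_{n-1}(\mathbb{Z}[1/p])$ acts with no global fixed point on the $(n-2)$-dimensional affine building of $\mathrm{SL}_{n-1}(\mathbb{Q}_p)$, so no argument valid for all finitely generated rings can give the $H_i$ fixed points ``for free.'' Your fallback tools also do not reach far enough: property $(T)$ yields only $\mathrm{FA}_1$ (Watatani), never $\mathrm{FA}_d$ for $d\geq 2$ (the same building example, $\mathrm{SL}_3(\mathbb{Z}[1/p])$, has $(T)$), it is not known for $EU_{2n}(R,\Lambda)$ (as the paper remarks right after the theorem), and relative property $(T)$ produces fixed points for affine isometric actions on Hilbert spaces, not for isometric actions on \textrm{CAT(0)} cell complexes.

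The missing idea is the correct choice of family: one-parameter root subgroups arranged in a cycle, not lower-rank copies of $E_{n-1}(R)$; with that choice no induction on $n$ and no property $(T)$ are needed. The paper applies Farb's Theorem 5.1 (Lemma \ref{general}) to $\Gamma_i=\langle e_{i,i+1}(x):x\in R\rangle$ for $1\leq i\leq n-1$ together with $\Gamma_n=\langle e_{n1}(x):x\in R\rangle$. These $n$ abelian subgroups generate $E_n(R)$ by the commutator formulas (Lemma \ref{ecom}), verifying condition (1); any proper subfamily generates a nilpotent group, since deleting one subgroup breaks the cycle and leaves a triangular configuration of roots, verifying condition (2); and every element of every $\Gamma_i$ is a commutator inside a nilpotent subgroup, e.g.\ $e_{12}(r)=[e_{13}(1),e_{32}(r)]$ (Lemma \ref{fuu}), verifying condition (3) with $m=1$ --- this last condition is what replaces property $(T)$: it forces each generator to be elliptic under any semisimple action, because elements of $[N,N]$ with $N$ nilpotent have zero translation length. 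Farb's theorem then gives strong $\mathrm{FA}_{n-2}$ outright, and the unitary case runs identically on a cyclic family of $n+1$ subgroups built from the $\rho_{ij}$, which is exactly where the better index $\mathrm{FA}_{n-1}$ comes from.
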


When $R$ is commutative, Theorem \ref{th3} recovers partially the results
obtained by Farb \cite{fa} for Chevalley groups. The dimension in Theorem %
\ref{th3} is sharp, since the group $\mathrm{SL}_{n}(\mathbb{Z}[1/p])$ acts
without a global fixed point on the affine building associated to $\mathrm{SL%
}_{n}(\mathbb{Q}_{p})$ and this building is an $(n-1)$-dimensional,
nonpositively curved simplicial complex.

\begin{remark}
\emph{The property }$\mathrm{FA}_{n-2}$\emph{\ of }$E_{n}(R)$\emph{\
obtained in Theorem \ref{th3} can be viewed as a higher dimensional
generalization of Serre's property }$\mathrm{FA}$\emph{\ for some Kazhdan's
groups. However, it is not clear that every unitary elementary group }$%
EU_{2n}(R,\Lambda )$\emph{\ also has Kazhdan's property }$(T)$\emph{\ (for
property }$(T)$\emph{\ of groups defined by roots, see Ershov,
Jaikin-Zapirain and Kassabov \cite{ejk}).}
\end{remark}

We consider property $\mathrm{FA}_{d}$ for general linear groups $\mathrm{GL}%
_{n}(R)$ over a general ring $R$. For this, we have to introduce notions of $%
K$-groups $K_{1}(R),$ $KU_{1}(R,\Lambda ),$ the stable range $\mathrm{sr}(R)$
and the unitary stable range $\Lambda \mathrm{sr}(R,\Lambda )$ (for details,
see Section \ref{stab}). The stable range is not bigger than most other
famous dimensions of rings, e.g. absolute stable range, $1+$ Krull
dimension, $1+$ maximal spectrum dimension, $1+$ Bass-Serre dimension. When $%
R$ is a Dedekind domain, the stable range $\mathrm{sr}(R)\leq 2.$ When $G$
is a finite group and $\mathbb{Z[}G]$ the integral group ring, the stable
range $\mathrm{sr}(\mathbb{Z[}G])\leq 2.$ The next theorem gives a criterion
when the general linear group $\mathrm{GL}_{n}(R)$ has property $\mathrm{FA}%
_{d}.$

\begin{theorem}
\label{th4}

\begin{enumerate}
\item[(i)] Let $R$ be a finitely generated ring with finite stable range $d=%
\mathrm{sr}(R).$ Suppose that $n\geq d+1$ and the $K$-group $K_{1}(R)$ has
property $\mathrm{FA}_{n-2}$ (e.g. $K_{1}(R)$ is finite). Then the general
linear group $\mathrm{GL}_{n}(R)$ has property $\mathrm{FA}_{n-2}.$

\item[(ii)] Let $(R,\Lambda )$ be a form ring over a finitely generated
associative ring $R$ with a finite $\Lambda $-stable range $d=\Lambda
\mathrm{sr}(R).$ Suppose that $n\geq d+1$ and the $K$-group $%
KU_{1}(R,\Lambda )$ has property $\mathrm{FA}_{n-1}$ (e.g. $KU_{1}(R)$ is
finite). Then the unitary group $U_{2n}(R,\Lambda )$ has property $\mathrm{FA%
}_{n-1}.$
\end{enumerate}
\end{theorem}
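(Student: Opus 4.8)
The plan is to reduce Theorem~\ref{th4} to Theorem~\ref{th3} by exploiting the structure of the general linear group as an extension of its elementary subgroup, using the finiteness of the stable range to control the quotient. I would treat part (i) in detail; part (ii) follows by the same argument with the unitary $K$-group and the unitary stable range playing the corresponding roles.

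First I would recall that for any ring $R$, the elementary subgroup $E_n(R)$ is normal in $\mathrm{GL}_n(R)$ once $n$ is large enough relative to the stable range — this is a standard $K$-theoretic fact (a consequence of the stabilization/surjective-stability results that define $\mathrm{sr}(R)$), and under the hypothesis $n\geq d+1 = \mathrm{sr}(R)+1$ the quotient $\mathrm{GL}_n(R)/E_n(R)$ is naturally isomorphic to the stabilized $K$-group $K_1(R) = \mathrm{GL}(R)/E(R)$. This gives a short exact sequence
\[
1 \longrightarrow E_n(R) \longrightarrow \mathrm{GL}_n(R) \longrightarrow K_1(R) \longrightarrow 1.
\]
The point of the stable-range hypothesis is precisely to make this sequence available at the \emph{finite} level $n$ rather than only in the stable limit, and to identify the cokernel as the given $K$-group. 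I would then note that $E_n(R)$ has property $\mathrm{FA}_{n-2}$ by Theorem~\ref{th3}, so both the normal subgroup and (by hypothesis) the quotient $K_1(R)$ have property $\mathrm{FA}_{n-2}$.

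The core of the proof is then a permanence lemma: if $1\to N\to G\to Q\to 1$ is a short exact sequence with both $N$ and $Q$ having property $\mathrm{FA}_d$, then $G$ has property $\mathrm{FA}_d$. Given an isometric $G$-action on an $n'$-dimensional $\mathrm{CAT(0)}$ cell complex $X$ with $n'\leq n-2$, I would first restrict to $N$: by $\mathrm{FA}_d$ for $N$ the fixed-point set $X^N$ is nonempty. Since $N$ is normal, $g\cdot X^N = X^{gNg^{-1}} = X^N$ for every $g\in G$, so $G$ preserves $X^N$; because $X^N$ is a closed convex subcomplex of a $\mathrm{CAT(0)}$ space it is itself $\mathrm{CAT(0)}$ of dimension at most $n-2$, and the $G$-action on it factors through $Q=G/N$. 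Applying $\mathrm{FA}_d$ for $Q$ to this induced action on $X^N$ produces a point fixed by all of $Q$, hence by all of $G$, giving the desired global fixed point for $G$ on $X$.

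I expect the main obstacle to be the first step — establishing normality of $E_n(R)$ and the identification $\mathrm{GL}_n(R)/E_n(R)\cong K_1(R)$ at exactly the threshold $n\geq d+1$ — since this is where the hypothesis on the stable range $\mathrm{sr}(R)$ (respectively $\Lambda\mathrm{sr}(R)$) does the real work, and one must invoke the correct surjective-stability theorem for $K_1$ (and $KU_1$) and verify that the bound is as stated; the convexity and dimension-preservation properties of $X^N$ needed in the permanence lemma are standard $\mathrm{CAT(0)}$ facts. For part (ii) the same scheme applies, replacing Theorem~\ref{th3}'s $\mathrm{FA}_{n-2}$ bound for $EU_{2n}(R,\Lambda)$ by the $\mathrm{FA}_{n-1}$ bound, using the short exact sequence $1\to EU_{2n}(R,\Lambda)\to U_{2n}(R,\Lambda)\to KU_1(R,\Lambda)\to 1$ valid when $n\geq \Lambda\mathrm{sr}(R)+1$, and otherwise running the identical fixed-point-set argument.
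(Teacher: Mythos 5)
Your proposal is correct and follows essentially the same route as the paper: the stabilization isomorphism (Lemma \ref{2.2}, resp.\ Lemma \ref{2.4}) gives the exact sequence $1\to E_{n}(R)\to \mathrm{GL}_{n}(R)\to K_{1}(R)\to 1$ at the finite level $n\geq \mathrm{sr}(R)+1$, Theorem \ref{th3} supplies property $\mathrm{FA}_{n-2}$ for the kernel, and the hypothesis handles the quotient. The only difference is that the extension-permanence step you prove by hand (via the invariant, convex fixed-point set $X^{N}$) is exactly item (3) of Lemma \ref{lemfarb}, which the paper simply cites from Farb rather than re-deriving.
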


Note that the stable range of a ring $A$ of integers in a number field is $2$
and the group $K_{1}(A)$ is $A^{\ast },$ the group of invertible elements in
$A$ (cf. 11.37 in \cite{mag})$.$ According to Theorem \ref{th4}, for any
ring $A$ of integers in a number field with $A^{\ast }$ finite, the general
linear group $\mathrm{GL}_{n}(A)$ has property $\mathrm{FA}_{n-2}$ for $%
n\geq 3.$ Let $G$ be a finite group and $\mathbb{Z}[G]$ the integral group
ring over $G.$ As a corollary to Theorem \ref{th4}, we get a criterion when
the general linear group $\mathrm{GL}_{n}(\mathbb{Z[}G])$ has property $%
\mathrm{FA}_{n-2}.$

\begin{corollary}
\label{cor5}Suppose that $G$ is a finite group with the same number of
irreducible real representations and irreducible rational representations.
Then $K_{1}(\mathbb{Z}[G])$ is finite and for any integer $n\geq 3,$ the
general linear group $\mathrm{GL}_{n}(\mathbb{Z[}G])$ has property $\mathrm{%
FA}_{n-2}.$
\end{corollary}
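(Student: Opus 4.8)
The plan is to split the corollary into its two assertions: first that $K_{1}(\mathbb{Z}[G])$ is finite, and then that $\mathrm{GL}_{n}(\mathbb{Z}[G])$ has property $\mathrm{FA}_{n-2}$. The second will follow from the first by a direct application of Theorem \ref{th4}(i), so the real content lies in establishing finiteness of $K_{1}$.

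For the finiteness of $K_{1}(\mathbb{Z}[G])$, I would start from the facts that $\mathbb{Z}[G]$ is an order in the semisimple $\mathbb{Q}$-algebra $\mathbb{Q}[G]$ and that $K_{1}(\mathbb{Z}[G])$ is a finitely generated abelian group; finiteness is therefore equivalent to the vanishing of its free rank. The key input is the classical computation of this rank due to Bass. Writing $\mathbb{Q}[G]=\prod_{i}M_{n_{i}}(D_{i})$ as a product of simple algebras, with one factor for each irreducible rational representation, and passing to a maximal order $\mathfrak{M}\supseteq\mathbb{Z}[G]$, the inclusion changes $K_{1}$ only by a finite group (the conductor is a nonzero ideal), so the rank may be computed on $\mathfrak{M}$. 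By Morita invariance $K_{1}(\mathfrak{M})$ is the product of the groups $K_{1}$ of maximal orders in the division algebras $D_{i}$, and the rank of each such factor is computed, via the reduced norm and the Dirichlet unit theorem, by counting the appropriate infinite places of the relevant number field. Summing the contributions, the total free rank equals the number of simple components of $\mathbb{R}[G]$ minus the number of simple components of $\mathbb{Q}[G]$, that is, $r-q$, where $r$ is the number of irreducible real representations and $q$ the number of irreducible rational representations. Under the hypothesis $r=q$ the free rank is zero, so $K_{1}(\mathbb{Z}[G])$ is finite. This rank formula is the substantive input and the main obstacle; I would cite Bass rather than reprove it.

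It then remains to verify the hypotheses of Theorem \ref{th4}(i) for $R=\mathbb{Z}[G]$. The ring $\mathbb{Z}[G]$ is finitely generated, since the finitely many group elements generate it as a ring, and as recorded in the introduction its stable range satisfies $\mathrm{sr}(\mathbb{Z}[G])\leq 2$; hence $d=\mathrm{sr}(\mathbb{Z}[G])\leq 2$ and the condition $n\geq d+1$ holds for every $n\geq 3$. Finally, since $K_{1}(\mathbb{Z}[G])$ is finite it has property $\mathrm{FA}_{m}$ for every $m$: any isometric action of a finite group on a complete $\mathrm{CAT(0)}$ space fixes the circumcenter of an orbit, so in particular $K_{1}(\mathbb{Z}[G])$ has property $\mathrm{FA}_{n-2}$. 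All hypotheses of Theorem \ref{th4}(i) being met, it yields that $\mathrm{GL}_{n}(\mathbb{Z}[G])$ has property $\mathrm{FA}_{n-2}$ for every $n\geq 3$, as required.
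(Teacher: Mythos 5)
Your proposal is correct and follows essentially the same route as the paper: both rest on Bass's rank formula for $K_{1}(\mathbb{Z}[G])$ (rank $=$ number of irreducible real representations minus number of irreducible rational representations, cited rather than reproved), the stable range bound $\mathrm{sr}(\mathbb{Z}[G])\leq 2$ from Curtis--Reiner, the fact that a finite group acting on a complete \textrm{CAT(0)} space has a global fixed point, and then Theorem \ref{th4}(i). The extra sketch you give of how the rank formula is proved (maximal orders, Morita invariance, reduced norms, Dirichlet's unit theorem) is accurate but not needed beyond the citation, exactly as in the paper.
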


For example, when $G$ is any symmetric group (cf. page 14 in \cite{ol}), the
general linear group $\mathrm{GL}_{n}(\mathbb{Z[}G])$ has property $\mathrm{%
FA}_{n-2}$ for $n\geq 3.$

We consider the stable elementary groups $E(R)$ and $EU(R,\Lambda )$ acting
on a locally finite \textrm{CAT(0)} cell complex. Recall from Section \ref%
{sec} that the stable elementary group $E(R)$ is a direct limit of $E_{n}(R)$
$(n\geq 2)$ and similarly the stable elementary unitary group $EU(R,\Lambda
) $ is a direct limit of $EU_{2n}(R,\Lambda )$ $(n\geq 2).$ The following
result is obtained:

\begin{proposition}
\label{propcat}Let $R$ be any finitely generated ring. Then any simplicial
isometric action of $E(R)$ or $EU(R,\Lambda )$ on a uniformly locally finite
\textrm{CAT(0)} cell complex is trivial.
\end{proposition}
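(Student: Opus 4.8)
The plan is to leverage Theorem~\ref{th3} after first converting the hypothesis of uniform local finiteness into a genuine dimension bound. I would begin by observing that if every vertex of $X$ meets at most $N$ cells, then $X$ has finite dimension $d$: a single $d$-dimensional cell already contributes at least $2^{d}$ incident faces at each of its vertices, so $2^{d}\le N$ and hence $d\le\log_{2}N$. The same bound shows that every link $\mathrm{Lk}(v,X)$ is a \emph{finite} complex. Thus $X$ is a finite-dimensional, complete CAT(0) cell complex with uniformly finite local structure, which is exactly the setting to which Theorem~\ref{th3} applies stage by stage.

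With $d=\dim X$ fixed, I would apply Theorem~\ref{th3} to each finite stage. For $n\ge d+2$ the group $E_{n}(R)$ has property $\mathrm{FA}_{n-2}$, hence $\mathrm{FA}_{d}$, so its fixed-point set $F_{n}:=\mathrm{Fix}(E_{n}(R))$ is a nonempty convex subcomplex of $X$. Since $E_{n}(R)\subseteq E_{n+1}(R)$ under the stabilization maps, the sets $F_{d+2}\supseteq F_{d+3}\supseteq\cdots$ are nested, and any point of $\bigcap_{n\ge d+2}F_{n}$ is fixed by $E(R)=\varinjlim_{n}E_{n}(R)$. Nonemptiness of this intersection is itself a point that needs care: I would obtain it from convexity together with local finiteness, for instance via circumcentres when the $F_{n}$ are bounded, reducing the unbounded case to the bounded one. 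This produces a global fixed point $p$ for $E(R)$; the unitary case is identical, using the $\mathrm{FA}_{n-1}$ half of Theorem~\ref{th3} for $EU_{2n}(R,\Lambda)$ and $EU(R,\Lambda)=\varinjlim_{n}EU_{2n}(R,\Lambda)$.

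The substantive point, and the step I expect to be the main obstacle, is upgrading \emph{global fixed point} to \emph{trivial action}. Here I would exploit what is special about the \emph{stable} group: it carries infinitely many mutually commuting conjugate copies of each finite stage. Concretely, the shift sending the index block $\{1,\dots,n\}$ to $\{kn+1,\dots,(k+1)n\}$ is realized by an element of $E(R)$, so each generator $g\in E_{n}(R)$ is conjugate in $E(R)$ to pairwise-commuting copies $g=g_{0},g_{1},g_{2},\dots$, and any finite sub-collection lies in a single $E_{m}(R)$ with a common fixed point. Fixing $p$, the group $E(R)$ acts on the finite link $\mathrm{Lk}(p,X)$ through a finite quotient $E(R)\to\mathrm{Aut}(\mathrm{Lk}(p,X))$; the commuting-conjugate (swindle/mitotic) structure of the stable elementary group forces this quotient to be trivial, so $E(R)$ fixes $p$ together with every direction at $p$. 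Then $\mathrm{Fix}(E(R))$ is open as well as closed, and connectedness of $X$ yields $\mathrm{Fix}(E(R))=X$, i.e. the action is trivial.

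The delicate part is justifying triviality of the finite-link action purely from the stable structure. I would isolate this as a lemma asserting that $E(R)$ (resp.\ $EU(R,\Lambda)$) admits no nontrivial homomorphism to a finite group, to be proved by a displacement/translation-length swindle over the commuting shifted copies: each $g_{k}$ has the same translation length as $g$, finitely many of them share a fixed point, and the shift conjugating the partial product $g_{0}\cdots g_{m-1}$ to $g_{1}\cdots g_{m}$ constrains these lengths so strongly that, in any finite image, the common value must be trivial. Establishing this swindle rigorously — in particular checking that the shift elements genuinely lie in $E(R)$ and that the recursion closes up — is where I anticipate the real work of the proof to lie.
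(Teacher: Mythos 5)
Your overall skeleton (a fixed point from Theorem~\ref{th3}, then the action on a finite link, then propagation over the connected complex) matches the paper's, but two of your steps have genuine gaps, and in both places the paper's proof is simpler. First, the nested-intersection step fails as stated: a decreasing sequence of nonempty closed convex subsets of a CAT(0) space can have empty intersection when the sets are unbounded (already $F_n=[n,\infty)$ in $\mathbb{R}$), and there is no reason that the fixed-point sets $F_n=\mathrm{Fix}(E_n(R))$ are bounded, so the circumcentre device has nothing to act on; ``reducing the unbounded case to the bounded one'' is precisely the part that cannot be done in general. The paper sidesteps this entirely: it never seeks a common fixed point for the whole stable group. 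It fixes a \emph{single} $n$ larger than both the dimension of $X$ (plus $2$) and the uniform bound $N$ on vertex degrees, lets $G=E_n(R)\subset E(R)$ act, gets one fixed vertex $x_0$ by Theorem~\ref{th3}, shows $G$ acts trivially on all of $X$, and only then passes to the stable group: the kernel of the $E(R)$-action is a normal subgroup containing $G$, and $E(R)$ (resp.\ $EU(R,\Lambda)$) is normally generated by this copy of $E_n(R)$ by the commutator formulas (Lemma~\ref{ecom}, Lemma~\ref{2.1}), so the kernel is everything. This normal-generation step is the idea your proposal is missing, and it renders both of your problematic steps unnecessary.

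Second, the triviality of the action on the finite link. The paper's tool is Lemma~\ref{lem4.2}: for $n\ge 3$, any action of $E_n(R)$ on a finite set with fewer than $n$ points is trivial (proved via simplicity of the alternating group $A_n$, which normally generates $E_n(R)$); this applies directly because the link of $x_0$ has fewer than $N<n$ vertices. Your substitute --- that $E(R)$ admits no nontrivial homomorphism to a finite group --- is a true statement, but your proposed proof of it does not close up. In a finite quotient $\varphi:E(R)\to F$, pigeonhole on the partial products $h_m=g_0g_1\cdots g_{m-1}$ of commuting conjugates of $g$ only yields that some product of (distinct) conjugates of $\varphi(g)$ is trivial; that forces $\varphi(g)$ to vanish in the abelianization of $F$, which is vacuous information since $E(R)$ is perfect, and it does not force $\varphi(g)=1$ --- nothing prevents a product of $d$ distinct conjugates of a nontrivial element from being trivial. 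Translation lengths cannot rescue the argument either, since every element of a group acting with a global fixed point has translation length zero, so no nontrivial constraint survives in the finite image. The step you yourself flagged as ``the real work'' is thus exactly where the argument breaks, and the missing ingredient is Lemma~\ref{lem4.2} (equivalently, normal generation of $E_n(R)$ by its alternating subgroup), not a swindle.
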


When $R=\mathbb{Z}$ (so $E(R)=\mathrm{SL}(\mathbb{Z}))$, this is a result
proved by Chatterji and Kassabov (cf. Corollary 4.5 in \cite{ck}).

As the representations of groups with property \textrm{FA}$_{n}$ are quite
constrained, we obtain that for integers $k\geq n$ the elementary group $%
E_{k+1}(R)$ and the unitary elementary group $EU_{2k}(R,\Lambda )$ are
groups of integral $n$-representation type as follows. This theory was
introduced and studied by Bass \cite{ba}. When the ring $R$ in the second
group of Problem \ref{prob} is a field, we have the following.

\begin{corollary}
\label{correp}Let $R$ be a finitely generated ring and an integer $n\geq 2$.
For an integer $k\geq n,$ let $\Gamma $ be the elementary group $E_{k+1}(R)$
or the unitary elementary group $EU_{2k}(R,\Lambda )$ (for $%
EU_{2k}(R,\Lambda ),$ we assume that $k\geq \max \{n,3\}$). Let $\rho
:\Gamma \rightarrow \mathrm{GL}_{n}(K)$ be any representation of degree $n$
over a field $K$. Then

\begin{enumerate}
\item[(i)] the eigenvalues of each of the matrices in $\rho (\Gamma )$ are
integral. In particular they are algebraic integers if the characteristic $%
\mathrm{char}(K)=0$ and are roots of unity if the characteristic $\mathrm{%
char}(K)>0$; and

\item[(ii)] for any algebraically closed field $K,$ there are only finitely
many conjugacy classes of irreducible representations of $\Gamma $ into $%
\mathrm{GL}_{n}(K)$.
\end{enumerate}
\end{corollary}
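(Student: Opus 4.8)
The plan is to deduce Corollary \ref{correp} from the property $\mathrm{FA}_{n-1}$ results, namely Theorem \ref{th3}, together with the general theory of groups of integral $n$-representation type as developed by Bass \cite{ba}. The key structural input is that when $\Gamma = E_{k+1}(R)$ with $k \geq n$, Theorem \ref{th3} gives that $\Gamma$ has property $\mathrm{FA}_{k-1}$, and since $k-1 \geq n-1$, in particular $\Gamma$ has property $\mathrm{FA}_{n-1}$; similarly for $EU_{2k}(R,\Lambda)$ with $k \geq \max\{n,3\}$ we get property $\mathrm{FA}_{k-1} \subseteq \mathrm{FA}_{n-1}$. So in both cases $\Gamma$ is a finitely generated group (being finitely generated by elementary matrices over a finitely generated ring) with property $\mathrm{FA}_{n-1}$.

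\emph{Step one: a fixed point for the representation.} Given $\rho : \Gamma \to \mathrm{GL}_n(K)$, I would first reduce to the case where $K$ is finitely generated over its prime field, since $\Gamma$ is finitely generated and only finitely many entries of $K$ appear among the images of the finitely many generators. Now I want to produce a $\rho(\Gamma)$-invariant geometric structure. The standard mechanism (this is the heart of Bass's theory and of the argument Farb uses) is that $\mathrm{GL}_n$ of a field, or more precisely of the relevant local or global completions, acts on a product of symmetric spaces and Bruhat--Tits buildings, and these are $\mathrm{CAT(0)}$ spaces of controlled dimension. The point of property $\mathrm{FA}_{n-1}$ is that the dimension $n-1$ is tuned so that a faithful-enough action of $\Gamma$ on such an $(n-1)$-dimensional nonpositively curved complex must fix a point, and a global fixed point translates into an algebraic constraint on the eigenvalues.

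\emph{Step two: integrality via valuations.} To prove part (i), I would argue valuation by valuation. For each discrete valuation $v$ on $K$ (or on the finitely generated field in question), the group $\mathrm{GL}_n(K_v)$ acts on the associated Bruhat--Tits building, an $(n-1)$-dimensional $\mathrm{CAT(0)}$ complex. Property $\mathrm{FA}_{n-1}$ forces $\rho(\Gamma)$ to fix a vertex, which means $\rho(\Gamma)$ is conjugate into $\mathrm{GL}_n(\mathcal{O}_v)$, the matrices with $v$-integral entries. Since this holds for every $v$, the eigenvalues of every $\rho(\gamma)$ are $v$-integral for all $v$, hence integral over the appropriate ring of integers. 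In characteristic zero this gives algebraic integers; in positive characteristic one combines this with the fact that the group is generated by elements of finite order (the elementary generators satisfy relations forcing their images to be unipotent-or-torsion), and integral eigenvalues that are units at all places of a function field must be roots of unity. I expect the main obstacle to be \emph{handling positive characteristic cleanly and handling the case of an infinite or nondiscrete valuation}, where one must be careful that the fixed-point statement still applies to the correct finite-dimensional complex, and that passing from ``$v$-integral for all $v$'' to ``root of unity'' is justified.

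\emph{Step three: finitely many irreducible representations.} For part (ii), with $K$ algebraically closed, I would first reduce again to $K$ finitely generated, then use part (i): each irreducible $\rho$ factors, up to conjugacy, through a representation landing in matrices with eigenvalues that are roots of unity (char $>0$) or algebraic integers of bounded degree. Combined with the finite generation of $\Gamma$ and a rigidity or compactness argument — the character variety of a finitely generated group in a fixed $\mathrm{GL}_n$ is an algebraic variety, and the integrality constraints cut out a subset on which only finitely many semisimple conjugacy classes survive — one concludes there are finitely many conjugacy classes of irreducible representations. This is exactly the assertion that $\Gamma$ is of integral $n$-representation type in the sense of Bass \cite{ba}, so I would invoke his general theorem: a finitely generated group with property $\mathrm{FA}_{n-1}$ is of integral $n$-representation type, and the two displayed conclusions are precisely the defining consequences. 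The cleanest writeup therefore checks the hypotheses of Bass's theorem and quotes it, with the valuation argument above supplied as the verification of integrality.
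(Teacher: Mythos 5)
Your proposal is correct and follows essentially the same route as the paper: deduce property $\mathrm{FA}_{n-1}$ for $\Gamma$ from Theorem \ref{th3} (since $k-1\geq n-1$), then quote the general theorem that any finitely generated group with property $\mathrm{FA}_{n-1}$ is of integral $n$-representation type, of which (i) and (ii) are exactly the stated consequences. The only slip is attribution: that general theorem is Farb's (Theorems 1.7 and 1.8 of \cite{fa}), generalizing Bass's tree case, so your valuation/building and character-variety sketch is a re-derivation of Farb's result rather than something the proof needs to supply.
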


\bigskip

We now consider group actions on manifolds. The following conjecture is from
Farb and Shalen \cite{fs}, which is related to Zimmer's program (see \cite%
{z3,z87}).

\begin{conjecture}
Any smooth action of a finite-index subgroup of $\mathrm{SL}_{n}(\mathbb{Z})$%
, where $n>2$, on a $r$-dimensional compact manifold $M$ factors through a
finite group action if $r<n-1$.
\end{conjecture}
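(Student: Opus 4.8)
The plan is to attack the conjecture by replacing the smooth action with a combinatorial or cohomological one that the rigidity of $\mathrm{SL}_n(\mathbb{Z})$ can control directly. The guiding principle, following Zimmer's program, is that a higher-rank lattice cannot genuinely act on a manifold whose dimension lies below a critical threshold determined by the rank, so any such action must be essentially finite. Concretely, I would first pass to a torsion-free finite-index subgroup $\Gamma$, reduce to understanding the induced action on the (co)homology of $M$ and on the fixed-point sets of its finite subgroups, and then exploit property $(T)$ of $\mathrm{SL}_n(\mathbb{Z})$ together with the fixed-point results of this paper to force triviality up to finite index. Note that the critical dimension $r < n-1$, i.e. $r \leq n-2$, matches exactly the $\mathrm{FA}_{n-2}$ bound of Theorem~\ref{th3}, which is the source of the available fixed-point leverage.

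First I would dispose of the tractable model cases, namely where $M$ is a homotopy sphere or an acyclic manifold; this is precisely the setting of Theorem~\ref{th1}. Here the strategy is to observe that, because $\mathrm{SL}_n(\mathbb{Z})$ has property $(T)$ and the strong fixed-point properties $\mathrm{FA}_{n-2}$, the induced action on the singular cohomology of $M$ must be trivial in the relevant range, and then to combine Smith theory and Lefschetz fixed-point arguments for the finite subgroups with the connectivity of $M$ to conclude that the whole action is trivial once $r < n-1$. This recovers and generalizes the results of Bridson--Vogtmann, Parwani and Zimmermann for $\mathrm{SL}_n(\mathbb{Z})$ and $\mathrm{Sp}_{2n}(\mathbb{Z})$ acting on spheres and acyclic manifolds.

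The hard part is passing from these cohomologically simple manifolds to an arbitrary compact $r$-manifold $M$. For general $M$ there is no reason for the action on $H^{*}(M)$ to be trivial, and the fixed-point machinery no longer applies directly; one must instead control the derivative cocycle $\alpha\colon \Gamma \times M \to \mathrm{GL}_r(\mathbb{R})$. The natural route is Zimmer's cocycle superrigidity: one aims to show that $\alpha$ is measurably cohomologous to a cocycle valued in a compact subgroup, so that the action preserves a measurable Riemannian structure and is therefore isometric. Once the action is isometric, the low dimension $r \leq n-2$ should force the image in the isometry group to be finite, since by the representation constraints of Corollary~\ref{correp} a degree-$(n-2)$ representation of $\Gamma$ cannot have infinite image.

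I expect the decisive obstacle to be exactly this superrigidity step for the derivative cocycle in the purely smooth category: without an invariant volume the cocycle superrigidity theorem does not apply verbatim, and promoting a measurable splitting to genuine smooth triviality of the action demands regularity results that are not available in general. This is why the conjecture remains open; what the methods developed here deliver \emph{unconditionally} is the special case of spheres and acyclic manifolds through Theorem~\ref{th1}, together with the nonexistence of faithful low-degree actions implied by Corollary~\ref{correp}.
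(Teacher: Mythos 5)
This statement is the Farb--Shalen conjecture, which the paper quotes only as motivation: the paper contains no proof of it, and you are right to conclude that it remains open, with only special cases (spheres and acyclic manifolds, Theorem \ref{th1}) following from the paper's techniques. Your identification of the genuine obstruction on the Zimmer-program route---cocycle superrigidity for the derivative cocycle is not available without an invariant volume, and a measurable reduction cannot in general be promoted to a smooth one---is also accurate. So your overall verdict is correct.

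However, your sketch misrepresents the paper's partial results in two ways that matter. First, Theorem \ref{th1} is not proved via property $(T)$ or $\mathrm{FA}_{n-2}$ acting on the cohomology of $M$; those fixed-point properties concern isometric actions on \textrm{CAT(0)} spaces and play no role in the manifold results. The actual mechanism is Smith theory applied to explicit \emph{torsion} subgroups of $E_n(R)$ (the $\mathbb{Z}_2^{n-1}$ generated by the $A_{i,i+1}$ of Lemma \ref{prop}, or the $\mathbb{Z}_3^{k}$ generated by the $B_i$), combined with the fact that $E_n(R)$ is normally generated by a single elementary matrix. This exposes an internal contradiction in your plan: you propose to first pass to a torsion-free finite-index subgroup $\Gamma$ and then to use ``Smith theory and Lefschetz fixed-point arguments for the finite subgroups,'' but a torsion-free group has no nontrivial finite subgroups. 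This is precisely why the paper's methods cover $\mathrm{SL}_n(\mathbb{Z})$ itself but say nothing about proper finite-index subgroups, which is what the conjecture concerns. Second, Corollary \ref{correp} does not assert that a degree-$(n-2)$ representation of $\Gamma$ has finite image; it only constrains eigenvalues to be integral and bounds the number of conjugacy classes of irreducible representations, and it is stated for the full elementary groups rather than for finite-index subgroups. Finiteness of the image of low-degree representations of finite-index subgroups of $\mathrm{SL}_n(\mathbb{Z})$ is a consequence of Margulis superrigidity---exactly the tool the paper advertises avoiding---so your final isometric-action step would need to invoke machinery outside this paper even after the superrigidity obstacle you flag.
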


Parwani \cite{Pa} considers this conjecture for the group $\mathrm{SL}_{n}(%
\mathbb{Z})$ itself and $M$ is a sphere. The idea is to use the theory of
compact transformation groups to show that some sufficiently large finite
subgroups cannot act effectively on $M$, and then to use the Margulis
finiteness theorem to show that any $\mathrm{SL}_{n}(\mathbb{Z})$-action on $%
M$ must be finite. Such techniques are also used several times by many other
authors, e.g. the proof of trivial actions of $\mathrm{SL}_{n}(\mathbb{Z})$
on tori by Weinberger in \cite{sw1}, the proof of the trivial action of $%
\mathrm{SL}(\mathbb{Z})$ on compact manifolds by Weinberger in \cite{sw}
(Proposition 1), the proof of trivial actions of $\mathrm{SL}_{n}(\mathbb{Z}%
) $ on small finite sets by Chatterji and Kassabov in \cite{ck} (Lemma 4.2)
and so on. Zimmermann \cite{z3} actually proves that any smooth action of $%
\mathrm{SL}_{n}(\mathbb{Z})$ on small spheres is trivial. It is natural to
consider other kinds of group actions on compact manifolds. Zimmermann \cite%
{z10} proves a similar trivial action of the symplectic group $\mathrm{Sp}%
_{2n}(\mathbb{Z}).$ The group action of $\mathrm{Aut}(F_{n}),$ the
automorphism group of a free group, on spheres and acyclic manifolds is
considered by Bridson and Vogtmann \cite{BV} and similar trivial-action
results are obtained. More precisely, they show that for $n\geq 3$ and $%
d<n-1,$ any action of the special automorphism group $\mathrm{SAut}(F_{n})$
by homeomorphisms on a generalized $d$-sphere over $\mathbb{Z}_{2}$ or a $%
(d+1)$-dimensional $\mathbb{Z}_{2}$-acyclic homology manifold over $\mathbb{Z%
}_{2}$ is trivial. Hence the group $\mathrm{Aut}(F_{n})$ can act only via
the determinant map $\det :\mathrm{Aut}(F_{n})\rightarrow \mathbb{Z}_{2}.$
In this article, we notice that the Margulis finiteness theorem is not
necessary for such problem. Actually, we get a much more general result for
the actions of matrix groups over any general ring, as follows.

\begin{theorem}
\label{th1}Let $R$ be any ring and $n\geq 3$ be an integer. Suppose that $%
E_{n}(R)$ (\textsl{resp.} $EU_{2n}(R,\Lambda )$) is the matrix group
generated by all elementary matrices (\textsl{resp.} elementary unitary
matrices). Then we have that

\begin{enumerate}
\item[(a)(i)] for an integer $d\leq n-2,$ any action of $E_{n}(R)$ by
homeomorphisms on a generalized $d$-sphere over $\mathbb{Z}_{2}$ is trivial;

\item[(ii)] for an integer $d\leq n-1,$ any action of $E_{n}(R)$ by
homeomorphisms on a $d$-dimensional $\mathbb{Z}_{2}$-acyclic homology
manifold (\textsl{i.e.} has the $\mathbb{Z}_{2}$-homology of a point) is
trivial.

\item[(b)(i)] for an integer $d\leq n-2$ when $n$ is even or $d\leq n-3$
when $n$ is odd, any action of $E_{n}(R)$ by homeomorphisms on a generalized
$d$-sphere over $\mathbb{Z}_{3}$ is trivial;

\item[(ii)] for an integer $d\leq n-1$ when $n$ is even or $d\leq n-2$ when $%
n$ is odd$,$ any action of $E_{n}(R)$ by homeomorphisms on a $d$-dimensional
$\mathbb{Z}_{3}$-acyclic homology manifold (\textsl{i.e.} has the $\mathbb{Z}%
_{3}$-homology of a point) is trivial.

\item[(c)] The statements (a) and (b) also hold for $EU_{2n}(R,\Lambda )$
instead of $E_{n}(R).$
\end{enumerate}
\end{theorem}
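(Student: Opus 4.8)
The plan is to prove Theorem \ref{th1} by exploiting the abundance of finite subgroups inside $E_n(R)$ together with classical restrictions from the theory of compact transformation groups on cohomology spheres and acyclic manifolds. The key structural fact is that $E_n(R)$ contains, via the standard embedding of permutation and sign matrices, copies of large elementary abelian $p$-groups for $p=2$ and $p=3$. First I would make precise the relevant such subgroups: inside $E_n(R)$ one finds the group $(\mathbb{Z}_2)^{n-1}$ of diagonal $\pm 1$ matrices of determinant one (realized as products of the matrices $w_{ij}(1)w_{ij}(-1)^{-1}$ so that they genuinely lie in the elementary subgroup, not merely in $\mathrm{GL}_n$), and similarly a large $(\mathbb{Z}_3)^{k}$ coming from order-three monomial matrices. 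The parity split between $n$ even and $n$ odd in part (b) reflects exactly how many independent commuting order-three elements of determinant one one can build, so I would track the rank of the elementary abelian $3$-subgroup carefully.

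The core analytic input is the Borel-type formula bounding the dimension of a $\mathbb{Z}_p$-cohomology sphere (or acyclic manifold) that admits an effective action of $(\mathbb{Z}_p)^r$. Concretely, if $(\mathbb{Z}_p)^r$ acts effectively on a generalized $d$-sphere over $\mathbb{Z}_p$, then $d$ is bounded below in terms of $r$ (Smith theory and the Borel fixed-point formula give $d \ge r-1$ for $p=2$, with the analogous statement for acyclic manifolds), so in the stated dimension ranges $d \le n-2$ (resp.\ the $p=3$ ranges) the elementary abelian subgroup \emph{cannot} act effectively. Thus every action of $E_n(R)$ on such a space fails to be effective: it has nontrivial kernel. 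The next step is to upgrade ``non-effective'' to ``trivial.'' For this I would invoke the normal subgroup structure of $E_n(R)$: the kernel $N$ of the action is a normal subgroup, and I would argue that any normal subgroup meeting the relevant finite subgroups nontrivially must be all of $E_n(R)$. The cleanest route is to show the kernel contains a single nontrivial elementary matrix and then use that $E_n(R)$ is generated by elementaries that are all conjugate (or linked by commutator relations $[e_{ij}(a),e_{jk}(b)]=e_{ik}(ab)$), so normality forces the kernel to be everything.

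The step I expect to be the main obstacle is bridging cleanly from ``the finite subgroup acts with a nontrivial kernel'' to ``the whole elementary matrix group acts trivially,'' because a priori the kernel of the $E_n(R)$-action need only contain \emph{some} element of the finite subgroup, not a prescribed elementary generator. The plan to overcome this is to choose the finite subgroups so that their intersections with the standard elementary generators are controlled, and then to use a commutator/normal-generation argument: once one shows a nontrivial element of the diagonal or monomial torus lies in the kernel, conjugating by elementary matrices produces an elementary matrix $e_{ij}(a)$ with $a \ne 0$ in the kernel, after which the Steinberg-type relations propagate triviality across all generators. A secondary technical point is ensuring the transformation-group bounds are applied in the correct category (homeomorphisms, not diffeomorphisms) and over the right coefficient field, which is exactly why the hypotheses specify generalized spheres and $\mathbb{Z}_p$-acyclic homology manifolds rather than smooth manifolds; these are the natural setting for Smith theory and the cohomological Borel formula, and I would cite the Bridson--Vogtmann \cite{BV} treatment as the template for the $p=2$ case and extend it to $p=3$ by the corresponding rank count.
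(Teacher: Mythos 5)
Your outline is essentially the paper's strategy for part (b) and for the characteristic-$2$ case of part (a): order-$3$ block-rotation matrices give $\mathbb{Z}_3^{\lfloor n/2\rfloor}$, the odd-prime Smith--Borel bound ($d\geq 2k-1$ for effective $\mathbb{Z}_3^k$-actions) does cover the whole stated range, and a commutator computation then puts an honest elementary matrix (e.g.\ $e_{32}(1)$, via $[e_{32}(1),B_1]=e_{31}(-1)e_{32}(2)$ and $[e_{31}(-1)e_{32}(2),e_{12}(-1)]=e_{32}(1)$) in the kernel, which normally generates $E_n(R)$. But for $p=2$ over a ring of characteristic $\neq 2$ (e.g.\ $R=\mathbb{Z}$) both of your key steps fail, and this is exactly where the paper has to work hardest. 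The only $2$-torsion available there is the diagonal group $\mathbb{Z}_2^{n-1}$ of Lemma \ref{prop} (your proposed group; note that in characteristic $2$ it collapses to the identity, so it cannot serve there either), and the bound $d\geq r-1$ with $r=n-1$ only forbids effective actions for $d\leq n-3$, not $d\leq n-2$: indeed $\mathbb{Z}_2^{n-1}$ acts effectively and linearly on $S^{n-2}$, and on $\mathbb{R}^{n-1}$ in the acyclic case. So at the boundary dimensions claimed in (a) you cannot conclude that the kernel is nontrivial from rank considerations; and even where you can, the element handed to you might be the central $-I_n$ (when $n$ is even), from which nothing propagates. The paper instead runs an induction on $n$ (cases $n=3$, $n=4$, $n\geq 5$), using the mutual conjugacy of the involutions $A_{i,i+1}$, the codimension-$\geq 2$ fixed-point lemma (Lemma \ref{lemm2}), Smith theory (Lemma \ref{locsmith}), and Theorem 4.8 of \cite{BV} to manufacture a \emph{noncentral} involution acting trivially.

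The second gap is your upgrade from ``the kernel meets the finite subgroup'' to ``the kernel is everything.'' Conjugating a noncentral diagonal involution $A$ by an elementary matrix gives $e_{ij}(r)\,A\,e_{ij}(-r)\,A^{-1}=e_{ij}(2r)$ --- note the factor $2$ --- so the normal closure of $A$ contains only the relative elementary subgroup $E_n(R,2R)$ (Theorem \ref{normal}), which is proper whenever $2$ is not invertible: for $R=\mathbb{Z}$ it lies in the kernel of reduction mod $2$, and likewise $e_{ij}(2)$ does \emph{not} normally generate $E_n(\mathbb{Z})$. So your claim that any normal subgroup meeting these finite subgroups nontrivially is all of $E_n(R)$ is false, and the Steinberg relations only propagate triviality across $E_n(R,2R)$. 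The paper closes this by observing that the action then factors through $E_n(R)/E_n(R,2R)$, lifting that action to the Steinberg group $\mathrm{St}_n(R/2R)$ over the characteristic-$2$ ring $R/2R$, and rerunning the characteristic-$2$ argument there, where the unipotents $x_{ij}(1)$ ($1\leq i\leq n/2<j\leq n$) form an elementary abelian $2$-group of rank about $n^2/4\geq n$, large enough that non-effectiveness does hold in the full range. Your proposal as written proves (a) only for $d\leq n-3$ (spheres), respectively $d\leq n-2$ (acyclic manifolds), when $\mathrm{char}(R)\neq 2$, and silently assumes $2$ is invertible in the normal-generation step; part (c) (the unitary case, handled in the paper by the hyperbolic embedding $A\mapsto\mathrm{diag}(A,A^{*-1})$ and normal generation) is not addressed at all, though that step is routine.
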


When the ring $R=\mathbb{Z}$ and $E_{n}(R)=\mathrm{SL}_{n}(\mathbb{Z}),$ the
above theorem recovers the results obtained by Bridson and Vogtmann \cite{BV}%
, Parwani \cite{Pa} and Zimmermann \cite{z}. The dimensions in (a) and those
in (b) with even $n$ of Theorem \ref{th1} are sharp, since the group $%
\mathrm{SL}_{n}(\mathbb{Z})=E_{n}(\mathbb{Z})$ ($n\geq 3$) can act
nontrivially on the standard sphere $S^{n-1}$ and the Euclidean space $%
\mathbb{R}^{n}.$

If the parameter $\Lambda $ in the definition of form ring $(R,\Lambda )$
contains the identity $1\in R$, we can get an improvement of Theorem \ref%
{th1} as following.

\begin{theorem}
\label{th2}Let $(R,\Lambda )$ be a form ring. Suppose that $1\in \Lambda .$
Then we have that

\begin{enumerate}
\item[(i)] for an integer $d\leq 2n-2$, any action of $EU_{2n}(R,\Lambda )$
by homeomorphisms on a generalized $d$-sphere over $\mathbb{Z}_{3}$ is
trivial;

\item[(ii)] for an integer $d\leq 2n-1,$ any action of $EU_{2n}(R,\Lambda )$
by homeomorphisms on a $d$-dimensional $\mathbb{Z}_{3}$-acyclic homology
manifold (\textsl{i.e.} has the $\mathbb{Z}_{3}$-homology of a point) is
trivial.
\end{enumerate}
\end{theorem}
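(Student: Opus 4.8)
The plan is to run exactly the Smith-theoretic argument behind Theorem \ref{th1}, but to feed it a substantially larger elementary abelian $3$-subgroup than is available in general, the extra size being precisely what the hypothesis $1\in\Lambda$ buys. So the first and main new step is to construct a copy of $(\mathbb{Z}/3)^{n}$ inside the elementary group $EU_{2n}(R,\Lambda)$. For each hyperbolic plane $\langle e_i,e_{-i}\rangle$ the two long-root subgroups $\{T_i(s):s\in\Lambda\}$ and $\{T_{-i}(s):s\in\Lambda\}$ are defined, and because $1\in\Lambda$ they contain the integral parameters; hence they generate a subgroup $L_i\subseteq EU_{2n}(R,\Lambda)$ supported on the $i$-th plane and of $\mathrm{SL}_2$-type, containing $\mathrm{SL}_2(\mathbb{Z})$. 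Inside $L_i$ I pick the order-$3$ element $z_i$ acting on the $i$-th plane with characteristic polynomial $t^2+t+1$ and as the identity elsewhere. The $z_i$ have pairwise disjoint support, so they commute and generate $A\cong(\mathbb{Z}/3)^{n}$; crucially each $z_i$ is a product of elementary unitary generators, so $A\le EU_{2n}(R,\Lambda)$. This is exactly where $1\in\Lambda$ is essential: without it one can only place an order-$3$ rotation across a \emph{pair} of planes using short-root elements, yielding merely $(\mathbb{Z}/3)^{\lfloor n/2\rfloor}$ --- precisely the rank that produces the weaker bounds and the even/odd-$n$ dichotomy of Theorem \ref{th1}.

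Second, I feed $A$ into the odd-prime Smith theory used for Theorem \ref{th1}. For $p$ odd the fixed set of a $\mathbb{Z}/p$-action on a generalized $\mathbb{Z}_p$-homology $m$-sphere is again a $\mathbb{Z}_p$-homology sphere whose dimension drops from $m$ by an even amount, strictly positive whenever the factor acts nontrivially. Iterating over the $n$ generators of $A$ converts the rank $n$ into the dimension bound $2n-2$: on a homology $d$-sphere with $d\le 2n-2$, if every generator acted nontrivially on the successive fixed subspheres the total drop would be at least $2n>d+1$, which is impossible. This is the source of the number $2n-2$ in part (i), and of $2n-1$ in part (ii): a $\mathbb{Z}_3$-acyclic $d$-manifold is handled by passing to the linking homology $(d-1)$-sphere at a global fixed point, which exists because, by Smith theory, the fixed set of any finite $3$-group on a $\mathbb{Z}_3$-acyclic space is $\mathbb{Z}_3$-acyclic and hence nonempty, thereby reducing (ii) in dimension $d\le 2n-1$ to (i) in dimension $d-1\le 2n-2$.

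Third, one must upgrade these Smith-theoretic constraints, which a priori only say that the $3$-group cannot act effectively, to genuine triviality of the whole group. I would do this by induction on $n$ combined with the generation structure of the elementary unitary group. If some $z_i$ acts nontrivially on the homology $d$-sphere $X$, then $Y=X^{z_i}$ is a $\mathbb{Z}_3$-homology sphere of dimension at most $d-2\le 2(n-1)-2$, and the sub-unitary-group $EU_{2(n-1)}(R,\Lambda)$ supported on the remaining planes centralizes $z_i$ and so acts on $Y$; the inductive hypothesis forces that action to be trivial. If instead every $z_i$ acts trivially, then already $A\le\ker$. Combining these over the various planes, and using that the kernel is normal while the Weyl group inside $EU_{2n}(R,\Lambda)$ permutes the elementary root subgroups transitively within each root length, reduces the problem to showing that one root subgroup of each type lies in the kernel; since the elementary root subgroups generate $EU_{2n}(R,\Lambda)$, triviality of the whole group follows.

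The hardest part will be this last upgrade, namely forcing the \emph{infinite-order unipotent} root elements into the kernel, since Smith theory directly constrains only the finite subgroup $A$. The real work is the fixed-point bookkeeping that ties a root subgroup $U$ to an order-$3$ element normalizing it and acting on it without nonzero fixed vectors, so that the fixed-set analysis localizes the action of $U$ onto a subsphere of too small a dimension to support it nontrivially; here one genuinely uses the manifold and linking structure rather than homology alone, and must check that the reductions to fixed subspheres remain compatible with the inductive hypothesis. This is the same mechanism as in Theorem \ref{th1}, now carried out with the rank-$n$ group $A$ in place of the rank-$\lfloor n/2\rfloor$ one, which is what doubles the admissible dimension and removes the even/odd-$n$ distinction.
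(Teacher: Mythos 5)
Your first two steps coincide with the paper's. The paper's proof constructs exactly the group you describe: for $i=1,\ldots,n$ it takes $C_{i}=\rho _{i,n+i}(1)\rho _{n+i,i}(-1)\rho _{i,n+i}(1)\rho _{n+i,i}(-1)$, an order-$3$ element of the copy of $\mathrm{SL}_2(\mathbb{Z})$ sitting in the $i$-th hyperbolic plane, legitimate precisely because $1\in \Lambda$ makes the long-root parameters integral; these commute and generate $\mathbb{Z}_{3}^{n}$. Your Smith-theoretic dimension count is the content of Lemma \ref{lemm1} (Bridson--Vogtmann, Theorem 4.7), which the paper cites as a black box rather than reproves; note that the statement of that lemma already covers the $\mathbb{Z}_{3}$-acyclic case with the extra dimension, so your reduction of (ii) to (i) ``by passing to the linking homology $(d-1)$-sphere at a global fixed point'' is both unnecessary and, as written, not rigorous: a homeomorphism action on a homology manifold need not be locally linear, and there is no canonical link on which the group acts. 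The correct output of this stage is simply that for the stated $d$ the group $\mathbb{Z}_{3}^{n}$ cannot act effectively, so some nontrivial $\gamma $ in it lies in the kernel $K$ of $EU_{2n}(R,\Lambda )\rightarrow \mathrm{Homeo}(X)$.

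The genuine gap is your third step, which you yourself flag as the hardest part without supplying the mechanism. Your route --- induction on $n$, restricting the commuting copy of $EU_{2(n-1)}(R,\Lambda )$ to $Y=X^{z_{i}}$ --- only yields that this subgroup acts trivially \emph{on} $Y$, not on $X$; the one tool the paper has for converting fixed-set equalities into equality of images in $\mathrm{Homeo}(X)$ (Theorem 4.8 of Bridson--Vogtmann, used in the proof of Theorem \ref{th1}(a)) applies to finite-order homeomorphisms only, so it cannot force the infinite-order unipotent root elements into $K$; your appeal to Weyl-group transitivity presupposes that some root subgroup already lies in $K$, which is exactly what is never established; and the base case of the induction is not addressed. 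The paper needs none of this: since $K$ is normal and contains the nontrivial order-$3$ element $\gamma $, every commutator of $\gamma $ with an elementary unitary matrix lies in $K$, and exactly as in the proof of Theorem \ref{th1}(b)(i) --- where the identities $[e_{32}(1),B_{1}]=e_{31}(-1)e_{32}(2)$ and $[e_{31}(-1)e_{32}(2),e_{12}(-1)]=e_{32}(1)$ are used --- two explicit commutators produce a single elementary unitary matrix $\rho _{ij}(1)$ in $K$; by the relations of Lemma \ref{2.1} such a matrix normally generates $EU_{2n}(R,\Lambda )$, so $K$ is the whole group. This purely algebraic commutator argument is the missing ingredient; without it (or an equivalent), your proof does not close.
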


When the ring $R=\mathbb{Z}$ and $EU_{2n}(R,\Lambda )=\mathrm{Sp}_{2n}(%
\mathbb{Z}),$ the above theorem recovers a result obtained by Zimmermann in
\cite{z10}. Considering the nontrivial actions of the symplectic group $%
\mathrm{Sp}_{2n}(\mathbb{Z})$ on $S^{2n-1}$ and $\mathbb{R}^{2n},$ we see
that the dimensions in Theorem \ref{th2} are sharp.

As an easy corollary of Theorem \ref{th1} and Theorem \ref{th2}, we have a
negative answer to Problem \ref{prob} when $R$ is a subring of the real
numbers $\mathbb{R}$.

\begin{corollary}
\label{repcor}Let $R$ be a general ring and $S$ a commutative ring. Assume
that $A$ is a subring of the real numbers $\mathbb{R}$ and $n\geq 3.$ Then

\begin{enumerate}
\item[(i)] any group homomorphism
\begin{equation*}
E_{n}(R)\rightarrow E_{n-1}(A)
\end{equation*}%
is trivial;

\item[(ii)] any group homomorphism%
\begin{equation*}
\mathrm{Sp}_{2n}(S)\rightarrow \mathrm{Sp}_{2(n-1)}(A)
\end{equation*}%
is trivial.
\end{enumerate}
\end{corollary}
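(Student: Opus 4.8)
The plan is to reduce both statements to the manifold-triviality theorems by \textbf{linearizing the target group}. For part (i), suppose $\phi\colon E_n(R)\to E_{n-1}(A)$ is a homomorphism. Because $A$ is a subring of $\mathbb{R}$, we have inclusions $E_{n-1}(A)\subseteq \mathrm{SL}_{n-1}(A)\subseteq \mathrm{SL}_{n-1}(\mathbb{R})$, and $\mathrm{SL}_{n-1}(\mathbb{R})$ acts linearly on $\mathbb{R}^{n-1}$, fixing the origin. Precomposing with $\phi$ produces an action of $E_n(R)$ by homeomorphisms on $\mathbb{R}^{n-1}$. The space $\mathbb{R}^{n-1}$ is contractible, hence has the $\mathbb{Z}_2$-homology of a point, and is a topological (so homology) manifold of dimension $d=n-1$.

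I would then invoke Theorem \ref{th1}(a)(ii): since $d=n-1\leq n-1$, the resulting action of $E_n(R)$ on $\mathbb{R}^{n-1}$ is trivial, i.e.\ each $\phi(g)$ acts as the identity homeomorphism. As the linear action of $\mathrm{SL}_{n-1}(\mathbb{R})$ on $\mathbb{R}^{n-1}$ is faithful (a matrix fixing every vector is the identity), this forces $\phi(g)=I$ for all $g$, so $\phi$ is trivial. It is worth recording that one must use the acyclic-manifold case here rather than the sphere case: the natural linear action lives on $\mathbb{R}^{n-1}$, and its one-point compactification $S^{n-1}$ is a generalized sphere of dimension $n-1$, which lies outside the range $d\leq n-2$ of Theorem \ref{th1}(a)(i); the Euclidean space $\mathbb{R}^{n-1}$ instead sits exactly at the admissible boundary $d=n-1$ for acyclic manifolds.

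Part (ii) runs in parallel, but over $\mathbb{Z}_3$. Given $\phi\colon \mathrm{Sp}_{2n}(S)\to \mathrm{Sp}_{2(n-1)}(A)$, the target embeds in $\mathrm{Sp}_{2(n-1)}(\mathbb{R})$ and acts linearly and faithfully on $\mathbb{R}^{2n-2}$, which is a $\mathbb{Z}_3$-acyclic homology manifold of dimension $2n-2$. I would restrict $\phi$ to the elementary symplectic subgroup $Ep_{2n}(S)=EU_{2n}(S,\Lambda)$, where $\Lambda$ is the symplectic form parameter (so that $1\in\Lambda$); this is exactly the hypothesis needed to apply Theorem \ref{th2} rather than the weaker bound coming from Theorem \ref{th1}(c). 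Using Theorem \ref{th2} is essential here: its range $d\leq 2n-1$ covers the dimension $2n-2$ of $\mathbb{R}^{2n-2}$, whereas the bounds of Theorem \ref{th1}(c) (of the form $d\leq n-1$) fall far short once $n\geq 3$. Thus Theorem \ref{th2}(ii) gives that the restriction of $\phi$ to $EU_{2n}(S,\Lambda)$ is trivial, and faithfulness of the linear action yields $\phi|_{EU_{2n}(S,\Lambda)}=\{I\}$.

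The main obstacle is the final passage from the elementary symplectic group to the full group $\mathrm{Sp}_{2n}(S)$. The argument controls $\phi$ only on $EU_{2n}(S,\Lambda)=Ep_{2n}(S)$, whereas the statement concerns all of $\mathrm{Sp}_{2n}(S)$, and Theorem \ref{th2} is available only for the elementary group. To close this gap one needs $\mathrm{Sp}_{2n}(S)=Ep_{2n}(S)$, i.e.\ that the symplectic group is generated by elementary symplectic matrices. This holds for $S=\mathbb{Z}$ (recovering Zimmermann's case), as well as for fields, local and semilocal rings, and Euclidean domains, and more generally whenever $n$ is large relative to the stable range of $S$; for a completely arbitrary commutative ring the quotient $\mathrm{Sp}_{2n}(S)/Ep_{2n}(S)$ need not vanish, so this generation step is where hypotheses on $S$ (or the convention that $\mathrm{Sp}_{2n}(S)$ denotes the elementary symplectic group) must enter. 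Part (i) presents no such difficulty, because there the domain $E_n(R)$ is precisely the group to which Theorem \ref{th1} applies, while in (ii) the domain $\mathrm{Sp}_{2n}(S)$ strictly contains the elementary subgroup $EU_{2n}(S,\Lambda)$ governed by Theorem \ref{th2}.
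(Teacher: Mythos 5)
Your argument is essentially the paper's own proof. For (i) the paper does exactly what you do: let $E_{n-1}(A)$ act on $\mathbb{R}^{n-1}$ by matrix multiplication, apply Theorem \ref{th1}(a)(ii) at $d=n-1$ to the pulled-back action of $E_{n}(R)$, and conclude from faithfulness of the linear action; for (ii) the paper likewise invokes Theorem \ref{th2} for the linear action of $\mathrm{Sp}_{2(n-1)}(A)$ on $\mathbb{R}^{2(n-1)}$. The obstacle you flag in (ii) is real, but it is a gap in the paper's proof rather than a defect of your write-up: Theorem \ref{th2} controls only actions of the elementary symplectic group $EU_{2n}(S,S)$, whereas the paper's one-sentence proof applies it to all of $\mathrm{Sp}_{2n}(S)$ without comment, in effect assuming $\mathrm{Sp}_{2n}(S)=EU_{2n}(S,S)$; this holds for $S=\mathbb{Z}$ (Zimmermann's case), fields, and rings of arithmetic type, but for an arbitrary commutative ring the quotient $\mathrm{Sp}_{2n}(S)/EU_{2n}(S,S)$ is a symplectic $K_{1}$-type group that need not vanish, and nothing in the paper rules out a nontrivial homomorphism from that quotient into $\mathrm{Sp}_{2(n-1)}(A)$. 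So your proposal matches the paper's intent, and is more candid than the paper about the extra hypothesis (or extra argument) that part (ii) requires for general $S$.
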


As an easy corollary of Theorem \ref{th1} and Theorem \ref{th2}, we see that
the group $E(R)$ or $EU(R,\Lambda )$ cannot act nontrivially by
homeomorphisms on any generalized $d$-sphere or $\mathbb{Z}_{2}$-acyclic
homology manifold. Actually, on any compact manifold, the following theorem
shows that there are no nontrivial actions of $E(R)$ and $EU(R,\Lambda ).$

\begin{theorem}
\label{thcor}Let $R$ be any ring, $E(R)$ and $EU(R,\Lambda )$ the stable
elementary and unitary elementary groups. Then the group $E(R)$ or $%
EU(R,\Lambda )$ does not act topologically, nontrivially, on any compact
manifold, or indeed on any manifold whose homology with coefficients in a
field of positive characteristic is finitely generated.
\end{theorem}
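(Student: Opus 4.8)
The plan is to show that the kernel $N$ of any homeomorphism action $\varphi\colon E(R)\to\mathrm{Homeo}(M)$ is all of $E(R)$. The case in which $M$ is a generalized $\mathbb{Z}_p$-sphere or a $\mathbb{Z}_p$-acyclic homology manifold is already covered by Theorems \ref{th1} and \ref{th2} together with the direct-limit description $E(R)=\bigcup_n E_n(R)$: for a fixed dimension $d$ one chooses $n$ large enough that the relevant part of Theorem \ref{th1} forces $E_n(R)$ to act trivially, and since every $g\in E(R)$ lies in some $E_n(R)$, the whole group acts trivially. The genuinely new content is an arbitrary manifold $M$ with finitely generated $\mathbb{F}_p$-homology, and for this I would combine two ingredients: a rank bound from the theory of compact transformation groups, and the normal-subgroup structure of the stable elementary group. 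I may assume $R\neq 0$, since otherwise $E(R)$ is trivial, and I may take $p$ to be the prime field so that $H_\ast(M;\mathbb{F}_p)$ is finitely generated.

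First I would exhibit large elementary abelian $p$-subgroups inside $E(R)$. For $n\geq 3$ and any ring $R$ with identity, the signed Weyl elements $w_{ij}=e_{ij}(1)e_{ji}(-1)e_{ij}(1)$ and the involutions $w_{ij}^2$ lie in $E_n(R)$, and a short computation writes every even permutation matrix as a product of these; hence the alternating groups $A_m$ embed in $E_m(R)\subseteq E(R)$ as honest permutation matrices. Taking disjoint cycles — $p$-cycles when $p$ is odd and double transpositions when $p=2$ — produces a subgroup $A_s\cong(\mathbb{Z}/p)^s$ of $E(R)$ for every $s$. The reason for using permutations is that each nontrivial $x\in A_s$ is a nontrivial permutation matrix, so $x-I$ has an off-diagonal entry equal to $\pm 1$; consequently the two-sided ideal of $R$ generated by the entries of $x-I$ is all of $R$.

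Next I would invoke the bound of Mann--Su: since $H_\ast(M;\mathbb{F}_p)$ is finitely generated, there is an integer $r_0=r_0(M,p)$ such that no $(\mathbb{Z}/p)^r$ with $r>r_0$ acts effectively on $M$. Applying this to $A_s$ with $s>r_0$, the restriction $\varphi|_{A_s}$ has nontrivial kernel, so $N$ contains some nontrivial $x\in A_s$. By the previous paragraph $x-I$ generates the unit ideal, and I would then show that the normal closure $\langle\!\langle x\rangle\!\rangle$ in $E(R)$ is the whole group. The cleanest route is to quote the classification of the normal subgroups of the stable group $E(R)$ as the subgroups $E(R,I)$ indexed by two-sided ideals $I$: writing $N=E(R,I)$, if $I\neq R$ then every matrix in $N$ is congruent to $I$ modulo $I$, forcing all entries of $x-I$ into $I$, which contradicts that one of them is a unit; hence $I=R$ and $N=E(R)$. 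Alternatively, one commutes $x$ against elementary matrices to produce an elementary matrix $e_{ij}(u)$ with $u$ a unit, and then the Steinberg identity $e_{ki}(s)e_{ij}(u)e_{ki}(-s)=e_{kj}(su)e_{ij}(u)$ (with $k\neq j$) shows $e_{kj}(t)\in\langle\!\langle e_{ij}(u)\rangle\!\rangle$ for all $t$, so $e_{ij}(u)$ normally generates $E(R)$. Either way $N\supseteq\langle\!\langle x\rangle\!\rangle=E(R)$, so $\varphi$ is trivial.

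The unitary case is formally identical: $EU(R,\Lambda)=\bigcup_n EU_{2n}(R,\Lambda)$ contains elementary abelian $p$-groups obtained by permuting hyperbolic pairs, the corresponding permutation elements again satisfy the unit-ideal condition, and the analogous normal structure of $EU(R,\Lambda)$ lets the same normal-closure step go through. I expect the main obstacle to be exactly this last step — verifying that a nontrivial permutation normally generates the stable elementary (resp. unitary) group — since this is where the algebra of $E(R)$, rather than soft transformation-group input, does the work; the transformation-group part only supplies a single nontrivial element of the kernel.
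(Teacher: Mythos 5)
Your proposal is correct, and it follows the same overall skeleton as the paper's proof — a Mann--Su-type rank bound on elementary abelian $p$-subgroups acting effectively, followed by the observation that any nontrivial element of the kernel normally generates the stable group — but your implementation is genuinely different, and in one respect more robust. The paper splits on the characteristic of $R$: in characteristic $2$ it uses the subgroup $\mathbb{Z}_{2}^{k}$ generated by the $e_{1j}(1)$, while otherwise it uses the diagonal sign matrices $A_{i,i+1}$ of Lemma \ref{prop}, invokes Theorem \ref{normal} to place $E(R,2R)$ in the kernel, and then reduces modulo $2$ to the first case; your alternating subgroups $(\mathbb{Z}/p)^{s}$ of permutation matrices need no case division on $\mathrm{char}(R)$ at all. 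More importantly, your subgroups exist for \emph{every} prime $p$, so you can match $p$ to the characteristic of the coefficient field; the paper's blanket claim that $\mathbb{Z}_{2}^{k}$ cannot act effectively on the manifolds of Theorem \ref{thcor} really requires finitely generated mod-$2$ homology, which is automatic for compact $M$ but not for a manifold that merely has finitely generated mod-$q$ homology with $q$ odd (an infinite connected sum of copies of $\mathbb{RP}^{3}$ has finitely generated $\mathbb{F}_{3}$-homology yet admits effective $\mathbb{Z}_{2}^{k}$-actions of arbitrarily large rank by permuting summands). In this respect your route, which is essentially Weinberger's original argument in \cite{sw} transported to $E(R)$, covers the odd-characteristic noncompact case more carefully than the paper does. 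Two smaller points. First, the stable classification you quote is not quite right as stated: normal subgroups of $E(R)$ are not exactly the $E(R,I)$ but are sandwiched, $E(R,I)\subseteq N\subseteq \ker\bigl(\mathrm{GL}(R)\rightarrow \mathrm{GL}(R/I)\bigr)$ (relative $SK_{1}$ can be nontrivial); your argument only uses the upper inclusion, so it survives, and in any case your alternative commutator computation is self-contained and closer in spirit to the paper, whose proofs use nothing beyond the formulas of Lemma \ref{ecom}. Second, for the unitary case you do not need any unitary analogue of the sandwich theorem: as in the paper, once $E(R)$ is known to act trivially, compose the hyperbolic embedding $A\longmapsto \mathrm{diag}(A,A^{\ast -1})$ with the action to see that the image of $E(R)$ lies in the kernel, and then the commutator formulas of Lemma \ref{2.1} show this image normally generates $EU(R,\Lambda )$, so the whole action is trivial. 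That shortcut eliminates what you identified as the main obstacle.
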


When $R=\mathbb{Z}$ and $E(R)=\mathrm{SL}(\mathbb{Z}),$ Theorem \ref{thcor}
is Proposition 1 in \cite{sw}.

\begin{remark}
\emph{Let }$R$\emph{\ be any ring and }$n\geq 3$\emph{. The author believe
that the same results in this article hold as well for the Steinberg group }$%
St_{n}(R)$\emph{\ (resp., the unitary Steinberg group }$USt_{n}(R,\Lambda )$%
\emph{) instead of the elementary group }$E_{n}(R)$\emph{\ (resp., the
unitary elementary group }$EU_{n}(R,\Lambda )$\emph{). This is because in
the proofs(cf. last section), only commutator formulas are used and these
formulas hold as well for Steinberg groups (resp., unitary Steinberg groups).%
}
\end{remark}

In Section 2, we give some basic facts of $\mathrm{CAT(0)}$ spaces and
homology manifolds. In Section 3, we introduce the notions of elementary
groups $E_{n}(R)$, $EU_{2n}(R,\Lambda )$, Steinberg groups, the algebraic $K$%
-groups $K_{1}$, $KU_{1}$ and the stable ranges. The results in the
introduction will be proved in Section 4.

\section{Notations and basic facts}

\noindent \textbf{Standard assumptions.} In this article, we assume that all
the rings are associative rings with identity. All the \textrm{CAT(0)}
spaces are complete and all actions on them are isometric and semisimple.
When we talk about groups with properties $\mathrm{FA}_{n},$ we always
assume that the groups are finitely generated.

\subsection{CAT(0) spaces and property $\mathrm{FA}_{n}$}

Let $(X,d_{X})$ be a geodesic metric space. For three points $x,y,z\in X,$
the geodesic triangle $\Delta (x,y,z)$ consists of the three vertices $x,y,z$
and the three geodesics $[x,y],[y,z]$ and $[z,x].$ Let $\mathbb{R}^{2}$ be
the Euclidean plane with the standard distance $d_{\mathbb{R}^{2}}$ and $%
\bar{\Delta}$ a triangle in $\mathbb{R}^{2}$ with the same edge lengths as $%
\Delta $. Denote by $\varphi :\Delta \rightarrow \bar{\Delta}$ the map
sending each edge of $\Delta $ to the corresponding edge of $\bar{\Delta}.$
The space $X$ is called a \textrm{CAT(0)} space if for any triangle $\Delta $
and two elements $a,b\in \Delta ,$ we have the inequality
\begin{equation*}
d_{X}(a,b)\leq d_{\mathbb{R}^{2}}(\varphi (a),\varphi (b)).
\end{equation*}%
The typical examples of \textrm{CAT(0)} spaces include simplicial trees,
hyperbolic spaces, products of \textrm{CAT(0) }spaces and so on. From now
on, we assume that $X$ is a complete \textrm{CAT(0)} space. Denote by
\textrm{Isom}$(X)$ the isometry group of $X.$ For any $g\in $ \textrm{Isom}$%
(X)$\textrm{, }let
\begin{equation*}
\mathrm{Minset}(g)=\{x\in X:d(x,gx)\leq d(y,gy)\text{ for any }y\in X\}
\end{equation*}%
and let $\tau (g)=\inf\nolimits_{x\in X}d(x,gx)$ be the translation length
of $g.$ When the fixed-point set $\mathrm{Fix}(g)\neq \emptyset ,$ we call $%
g $ elliptic. When $\mathrm{Minset}(g)\neq \emptyset $ and $d_{X}(x,gx)=\tau
(g)>0$ for any $x\in \mathrm{Minset}(g),$ we call $g$ hyperbolic. The group
element $g$ is called semisimple if the minimal set $\mathrm{Minset}(g)$ is
not empty, i.e. it is either elliptic or hyperbolic. By a \textrm{CAT(0)}
complex, we mean a \textrm{CAT(0)} cell complex of piecewise constant
curvature with only finitely many isometry types of cells. For more details
on \textrm{CAT(0) }spaces, see the book of Bridson and Haefliger \cite{bh}.

The following definition of property $\mathrm{FA}_{n}$ and strong $\mathrm{FA%
}_{n}$ property were given by Farb \cite{fa} as a generalization of Serre's
property $\mathrm{FA}.$

\begin{definition}
\emph{Let }$n\geq 1$\emph{\ be an integer. A group }$\Gamma $\emph{\ is said
to have property }$\mathrm{FA}_{n}$\emph{\ if any isometric }$\Gamma $\emph{%
-action on any }$n$\emph{-dimensional, \textrm{CAT(0)} cell complex }$X$%
\emph{\ has a global fixed point. A group }$\Gamma $\emph{\ is said to have
strong }$\mathrm{FA}_{n}$\emph{\ property if any }$\Gamma $\emph{-action on
a complete \textrm{CAT(0)} space }$X$\emph{\ satisfying the following two
properties has a global fixed point.}

\begin{enumerate}
\item[(i)] $n$\emph{-dimensionality: The reduced homology group }$\tilde{H}%
_{n}(Y;\mathbb{Z})=0$\emph{\ for all open subsets }$Y\subseteq X.$

\item[(ii)] \emph{Semisimplicity: The action of }$\Gamma $\emph{\ on }$X$%
\emph{\ is semisimple, i.e., the translation length of each }$g\in \Gamma $%
\emph{\ is realized by some }$x\in X.$
\end{enumerate}
\end{definition}

When $n=1,$ the property $\mathrm{FA}_{1}$ corresponds with Serre's property
$\mathrm{FA}.$ Since any isometric action on a \textrm{CAT(0)} cell complex
must be semisimple (cf. page 231 in \cite{bh}), we see that strong $\mathrm{%
FA}_{n}$ implies $\mathrm{FA}_{n}.$ The following lemma contains some
general facts on $\mathrm{FA}_{n}$ (see pages 1578-1579 in \cite{fa} for
more details).

\begin{lemma}
\label{lemfarb}The following properties hold:

\begin{enumerate}
\item[(1)] If $G$ has property $\mathrm{FA}_{n}$ then $G$ has $\mathrm{FA}%
_{m}$ for all $m\leq n.$

\item[(2)] If $G$ has $\mathrm{FA}_{n}$ then so does every quotient group of
$G.$

\item[(3)] Let $H$ be a normal subgroup of $G.$ If $H$ and $G/H$ have $%
\mathrm{FA}_{n}$ then so does $G.$

\item[(4)] If some finite index subgroup $H$ of $G$ has $\mathrm{FA}_{n},$
then so does $G.$
\end{enumerate}
\end{lemma}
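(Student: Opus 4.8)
The plan is to prove each of the four statements by a standard fixed-point argument for complete CAT(0) spaces; throughout I fix a group acting by isometries on an $n$-dimensional complete CAT(0) cell complex $X$ and seek a global fixed point. Recall the two basic tools I will use repeatedly: the fixed-point set $\mathrm{Fix}(g)$ of an isometry is closed and convex, so the common fixed-point set $X^{H}=\bigcap_{h\in H}\mathrm{Fix}(h)$ of any subgroup $H$ is a closed convex (hence again complete CAT(0)) subspace; and every bounded subset $B$ of a complete CAT(0) space has a unique circumcenter, which is fixed by every isometry that stabilizes $B$ (the Bruhat--Tits fixed-point lemma).

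For (2) the argument is immediate: given an action of a quotient $G/N$ on such an $X$, precompose with the projection $G\twoheadrightarrow G/N$ to obtain a $G$-action; a global fixed point supplied by $\mathrm{FA}_{n}$ for $G$ is automatically fixed by $G/N$. For (1), given an action of $G$ on an $m$-dimensional complex $X$ with $m\le n$, I would form the product $X\times[0,1]^{\,n-m}$ with the product metric and the action trivial on the cube factor; this is an $n$-dimensional complete CAT(0) cell complex, so $\mathrm{FA}_{n}$ produces a fixed point whose projection to $X$ is $G$-fixed. (If one reads ``$n$-dimensional'' as ``of dimension at most $n$'', then (1) is a tautology.)

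For (3), restrict the $G$-action to the normal subgroup $H$. Since $H$ has $\mathrm{FA}_{n}$, the convex set $X^{H}$ is nonempty. Normality makes $X^{H}$ invariant under $G$: for $g\in G$, $h\in H$ and $x\in X^{H}$ one has $h(gx)=g\big((g^{-1}hg)x\big)=gx$ because $g^{-1}hg\in H$. Thus $G$ acts on $X^{H}$ with $H$ acting trivially, so the action factors through $G/H$; applying $\mathrm{FA}_{n}$ for $G/H$ to $X^{H}$ yields a point fixed by all of $G$. The one point that needs care here --- and the main technical obstacle of the whole lemma --- is that $X^{H}$ must itself qualify as an (at most) $n$-dimensional CAT(0) cell complex: it is convex, hence CAT(0) of geometric dimension $\le n$, and after passing to a barycentric subdivision (so that any cell mapped to itself is fixed pointwise) the fixed set of the cellular isometric $H$-action becomes a genuine subcomplex. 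This reduction to a subcomplex is what makes the quotient structure go through.

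For (4), let $H\le G$ have finite index and property $\mathrm{FA}_{n}$, and pick $p\in X^{H}$, which is nonempty as above. Because $p$ is fixed by $H$, the orbit $G\cdot p$ depends only on cosets in $G/H$, so it is a finite (hence bounded) set permuted by $G$. Its circumcenter is therefore a global fixed point for $G$, proving $\mathrm{FA}_{n}$ for $G$. I expect parts (2) and (4) to be routine, part (1) to be essentially formal, and the verification in (3) that the $H$-fixed-point set is a bona fide $n$-dimensional CAT(0) cell complex to be the only genuinely delicate step.
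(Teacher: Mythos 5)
You should know at the outset that the paper offers no proof of this lemma at all: it simply points to pages 1578--1579 of Farb \cite{fa}, so your proposal has to be judged against the standard arguments. Your parts (2) and (4) are exactly those standard arguments and are correct: quotient actions pull back along $G\twoheadrightarrow G/N$, and for finite index one takes the $G$-orbit of an $H$-fixed point (finite because it depends only on cosets of $H$), which is bounded and $G$-invariant, and applies the Bruhat--Tits circumcenter lemma. Part (1) is also essentially fine, with one caveat: the paper's CAT(0) cell complexes are of piecewise \emph{constant curvature}, and if the cells of $X$ are hyperbolic then $X\times[0,1]^{\,n-m}$ is no longer piecewise of constant curvature, so the product trick literally covers only the piecewise Euclidean case; the cleaner route is the one you note parenthetically, reading ``$n$-dimensional'' as ``of dimension at most $n$'', which is how the property is used throughout the paper.

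The genuine gap is in (3), at precisely the step you flag as delicate, and your proposed fix does not work. Property $\mathrm{FA}_{n}$ as defined here concerns arbitrary \emph{isometric} actions on $n$-dimensional CAT(0) cell complexes, and such actions need not be cellular in any subdivision: already on $\mathbb{R}$ with vertex set $\mathbb{Z}$, the isometry $x\mapsto c-x$ for irrational $c$ preserves no cell structure with finitely many isometry types of cells, so the device ``pass to a barycentric subdivision so that the fixed set becomes a subcomplex'' is simply not available. More fundamentally, a closed convex subset of an $n$-dimensional CAT(0) cell complex need not admit \emph{any} structure of a piecewise-constant-curvature CAT(0) cell complex with finitely many cell shapes (a round disc inside the square-tiled plane is already such a convex set), so convexity alone does not place $X^{H}$ in the class of spaces to which the hypothesis ``$G/H$ has $\mathrm{FA}_{n}$'' applies; this is where your induction fails to close. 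There is also a second, fixable, omission: you never check that the induced $G/H$-action on $X^{H}$ is semisimple. That part does hold, because the nearest-point projection $\pi\colon X\rightarrow X^{H}$ is $1$-Lipschitz and commutes with every isometry preserving $X^{H}$, so $\pi$ carries $\mathrm{Minset}(g)$ into $\mathrm{Minset}(g|_{X^{H}})$ and translation lengths are preserved. The standard repair for the main gap is to run your argument for the \emph{strong} $\mathrm{FA}_{n}$ property instead (this is the setting of Farb's Theorem 5.1, quoted as Lemma \ref{general} in the paper, and indeed the paper itself invokes part (2) of the present lemma for strong $\mathrm{FA}_{n}$ in the proof of Theorem \ref{th3}): the class of spaces appearing there --- complete CAT(0) spaces satisfying the homological $n$-dimensionality condition, carrying semisimple actions --- \emph{is} closed under passing to $H$-fixed point sets, since that notion of dimension is monotone under convex subspaces (by the dimension theory Farb cites) and semisimplicity restricts by the projection argument above. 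With the cell-complex version of $\mathrm{FA}_{n}$ alone, step (3) as you wrote it has a real hole.
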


\subsection{Homology manifolds and Smith theory}

Since the fixed-point set of a finite-period homeomorphism of a manifold is
not necessary a manifold any more, we are working with generalized
manifolds. All homology groups in this subsection are Borel-Moore homology
with compact supports and coefficients in a sheaf $\mathcal{A}$ of modules
over a principal ideal domain $L.$ All the concepts below are from Bredon's
book \cite{br}. Let $X$ be a locally finite CW-complex and $\mathcal{A}$ be
the constant sheaf $X\times L$ (simply denoted by $L$). The homology groups $%
H_{\ast }^{c}(X)$ of $X$ are isomorphic to the singular homology groups with
coefficients in $L$ (cf. page 279 in \cite{br}).

Let $L$ be the integers $\mathbb{Z}$ or the finite field $\mathbb{Z}_{p}$
for a prime $p.$ The following definition is from page 329 in \cite{br} (see
also Definition 4.1 of \cite{BV}).

\begin{definition}
\emph{An }$m$\emph{-dimensional homology manifold over }$L$\emph{\ (denoted }%
$m$\emph{-hm}$_{L}$\emph{) is a locally compact Hausdorff space }$X$\emph{\
with finite homological dimension over }$L$\emph{\ that has the local
homology properties of a manifold of dimension }$m$\emph{.}
\end{definition}

The homology spheres and homology acyclic manifolds are defined as follows
(cf. Definition 4.2 and 4.3 of \cite{BV}).

\begin{definition}
\emph{Let }$S^{n}$\emph{\ be the standard }$n$\emph{-dimensional sphere. If }%
$X$\emph{\ is an }$m$\emph{-hm}$_{L}$\emph{\ and }$H_{\ast }^{c}(X;L)\cong
H_{\ast }^{c}(S^{m};L)$\emph{\ then }$X$\emph{\ is called a generalized }$m$%
\emph{-sphere over }$L.$\emph{\ If }$X$\emph{\ is an }$m$\emph{-hm}$_{L}$%
\emph{\ with }$H_{0}^{c}(X;L)=L$\emph{\ and }$H_{k}^{c}(X;L)=0$\emph{\
for }$k>0,$\emph{\ then }$X$\emph{\ is said to be }$L$\emph{-acyclic.}
\end{definition}

The following "global" Smith theorem was originally proved by P.A. Smith (%
\cite{sm1}, \cite{sm2}). Here we follow the exposition in Bredon's book \cite%
{br}. The following lemma is a combination of Corollary 19.8 and Corollary
19.9 (page 144) in \cite{br} (see also Theorem 4.5 in \cite{BV}).

\begin{lemma}
\label{locsmith}Let $p$ be a prime and $X$ be a locally compact Hausdorff
space of finite dimension over $\mathbb{Z}_{p}.$ Suppose that $\mathbb{Z}%
_{p} $ acts on $X$ with fixed-point set $F.$

\begin{enumerate}
\item[(i)] If $H_{\ast }^{c}(X;\mathbb{Z}_{p})\cong H_{\ast }^{c}(S^{m};%
\mathbb{Z}_{p}),$ then $H_{\ast }^{c}(F;\mathbb{Z}_{p})\cong H_{\ast
}^{c}(S^{r};\mathbb{Z}_{p})$ for some $r$ with $-1\leq r\leq m.$ If $p$ is
odd, then $r-m$ is even.

\item[(ii)] If $X$ is $\mathbb{Z}_{p}$-acyclic, then $F$ is $\mathbb{Z}_{p}$%
-acyclic (in particular nonempty and connected).
\end{enumerate}
\end{lemma}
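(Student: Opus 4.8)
The plan is to prove Lemma \ref{locsmith} by classical Smith theory, reducing everything to the algebra of the group ring of $G=\mathbb{Z}_p$ over the field $\mathbb{Z}_p$. Write $T$ for a generator of $G$ and set $\rho = T-1$ and $N = 1 + T + \cdots + T^{p-1}$ inside $\mathbb{Z}_p[G]$. Over $\mathbb{Z}_p$ these satisfy the three relations $\rho^{p} = 0$, $\rho N = 0$, and $\rho^{p-1} = N$; in particular $\mathbb{Z}_p[G] \cong \mathbb{Z}_p[\rho]/(\rho^p)$ is a local ring in which the powers of $\rho$ form the only chain of ideals. The geometric content of ``$p$ is prime'' is that each $G$-orbit in $X$ is either free, contributing a free $\mathbb{Z}_p[G]$-module to the chains, or a single fixed point, on which $T$ acts as the identity so that $\rho$ acts as $0$; thus the fixed set $F$ is exactly detected by the vanishing of $\rho$.

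Working throughout with the Borel--Moore homology / sheaf cohomology with compact supports used in the statement (the finite homological dimension over $\mathbb{Z}_p$ guarantees the relevant complexes behave well), I would regard the cochain complex $A^{*} = C^{*}_c(X;\mathbb{Z}_p)$ as a complex of $\mathbb{Z}_p[G]$-modules. Following Bredon \cite{br}, one introduces Smith's special cohomology groups from the subcomplexes $\rho^{i}A^{*}$ ($1 \le i \le p-1$) together with the cochains of $F$, on which $\rho$ acts trivially. The core of the argument is the pair of fundamental Smith long exact sequences, which interlock $H^{*}_c(X;\mathbb{Z}_p)$, $H^{*}_c(F;\mathbb{Z}_p)$ and the special cohomology of the index-$\rho^{i}$ complexes; the connecting maps are induced by multiplication by $\rho$ and by $N = \rho^{p-1}$, and the relation $\rho N = 0$ is exactly what makes the two sequences close up into a usable exact staircase.

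With these sequences in hand I would feed in the hypothesis. In case (i), the assumption $H^{*}_c(X;\mathbb{Z}_p) \cong H^{*}_c(S^{m};\mathbb{Z}_p)$ bounds the special cohomology, and an induction on the power of $\rho$ shows that each index complex again has the cohomology of a sphere, the dimension dropping as one passes down the staircase; the upshot is $H^{*}_c(F;\mathbb{Z}_p) \cong H^{*}_c(S^{r};\mathbb{Z}_p)$ with $-1 \le r \le m$ (the value $r = -1$ encoding $F = \emptyset$). In case (ii), the same machinery with the acyclicity input forces the special cohomology to vanish, whence $F$ is $\mathbb{Z}_p$-acyclic and in particular nonempty and connected. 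The parity statement for odd $p$, that $r \equiv m \pmod 2$, comes from the fact that the staircase joining $X$ to $F$ has even length $p-1$, so the two sequences pair up and each full loop shifts degrees by an even amount; for $p = 2$ one has $\rho = N$ and a single sequence, imposing no parity constraint and allowing every $r$ in $[-1,m]$.

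The step I expect to be the main obstacle is establishing the Smith exact sequences correctly in this sheaf-theoretic, non-manifold setting and then carrying out the degree bookkeeping that yields the parity. A cleaner route to the parity, which I would use as a cross-check or alternative, is the Borel construction: passing to equivariant cohomology $H^{*}_G(X) = H^{*}(EG \times_G X;\mathbb{Z}_p)$ and invoking the localization theorem, restriction to $F$ becomes an isomorphism after inverting the polynomial generator $t \in H^{*}(BG;\mathbb{Z}_p)$. Since $\deg t = 2$ for odd $p$ and $\deg t = 1$ for $p = 2$, the $2$-periodicity of the localized theory for odd $p$ makes the congruence $r \equiv m \pmod 2$ transparent, while its failure for $p = 2$ explains the absence of a parity constraint there.
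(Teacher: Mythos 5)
The paper offers no proof of this lemma at all: it is imported as stated, being explicitly a combination of Corollaries 19.8 and 19.9 (page 144) of Bredon's book \cite{br} (with Theorem 4.5 of \cite{BV} as a parallel reference), so the relevant comparison is with Bredon's argument. Your outline --- the relations $\rho^{p}=0$, $\rho N=0$, $\rho^{p-1}=N$ in $\mathbb{Z}_{p}[G]$, the Smith special cohomology of the subcomplexes $\rho^{i}C_{c}^{*}$, and the two interlocking fundamental exact sequences fed first with the sphere hypothesis and then with the acyclicity hypothesis --- is precisely the classical Smith-theoretic proof by which those corollaries are established, so you have reconstructed the cited proof rather than found a genuinely different route. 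The only place your sketch is looser than the source is the parity claim for odd $p$: the ``even staircase'' heuristic is the right intuition (the two distinct sequences interlock with period $2$ when $p>2$, versus period $1$ when $p=2$), but the cleanest ways to nail it down are either the localization argument you already name or the mod-$p$ Euler characteristic congruence $\chi(X)\equiv\chi(F)\pmod{p}$, which for cohomology spheres reads $1+(-1)^{m}\equiv 1+(-1)^{r}\pmod{p}$ and hence forces $m\equiv r\pmod{2}$ when $p$ is odd.
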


\section{Elementary groups and $K$-theory}

\subsection{Elementary groups and Steinberg groups$\label{sec}$}

In this subsection, we briefly recall the definitions of the elementary
subgroups $E_{n}(R)$ of the general linear group $\mathrm{GL}_{n}(R)$, the
unitary elementary subgroup $EU_{2n}(R,\Lambda )$ of the unitary group $%
U_{2n}(R,\Lambda )$ and the Steinberg groups $\mathrm{St}_{n}(R)$. For more
details, see the book of Magurn \cite{mag}, the book of Hahn and O'Meara
\cite{ho} and the book of Bak \cite{bak}. We define the groups $\mathrm{GL}%
_{n}(R)$ and $E_{n}(R)$ first. Let $R$ be an associative ring with identity
and $n\geq 2$ be an integer. The general linear group $\mathrm{GL}_{n}(R)$
is the group of all $n\times n$ invertible matrices with entries in $R$. For
an element $r\in R$ and any integers $i,j$ such that $1\leq i\neq j\leq n,$
denote by $e_{ij}(r)$ the elementary $n\times n$ matrix with $1$ in the
diagonal positions and $r$ in the $(i,j)$-th position and zeros elsewhere.
The group $E_{n}(R)$ is generated by all such $e_{ij}(r),$\textsl{\ i.e. }%
\begin{equation*}
E_{n}(R)=\langle e_{ij}(r)|1\leq i\neq j\leq n,r\in R\rangle .
\end{equation*}
Denote by $I_{n}$ the identity matrix and by $[a,b]$ the commutator $%
aba^{-1}b^{-1}.$

The following lemma displays the commutator formulas for $E_{n}(R)$ (cf.
Lemma 9.4 in \cite{mag}).

\begin{lemma}
\label{ecom}Let $R$ be a ring and $r,s\in R.$ Then for distinct integers $%
i,j,k,l$ with $1\leq i,j,k,l\leq n,$ the following hold:

\begin{enumerate}
\item[(1)] $e_{ij}(r+s)=e_{ij}(r)e_{ij}(s);$

\item[(2)] $[e_{ij}(r),e_{jk}(s)]=e_{ik}(rs);$

\item[(3)] $[e_{ij}(r),e_{kl}(s)]=I_{n}.$
\end{enumerate}
\end{lemma}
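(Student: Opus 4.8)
The plan is to realize each generator as a perturbation of the identity by a single matrix unit and then expand directly. Write $e_{ij}(r) = I_n + rE_{ij}$, where $E_{ij}$ denotes the $n \times n$ matrix with $1$ in the $(i,j)$-th entry and zeros elsewhere. The only computational input needed is the multiplication rule for matrix units, $E_{ij}E_{kl} = \delta_{jk}E_{il}$, together with the observation that matrix multiplication keeps scalar entries in their left-to-right order, so that $(rE_{ij})(sE_{jk}) = rsE_{ik}$ with the factor $rs$ in exactly that order. This last point is what lets the whole argument go through over a noncommutative ring $R$ while still reproducing the right-hand sides verbatim.

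For part (1), I would expand $e_{ij}(r)e_{ij}(s) = (I_n + rE_{ij})(I_n + sE_{ij}) = I_n + (r+s)E_{ij} + rsE_{ij}^2$ and note that $E_{ij}^2 = \delta_{ji}E_{ij} = 0$ since $i \neq j$, which collapses the product to $e_{ij}(r+s)$. A consequence I would record immediately is that $e_{ij}(r)^{-1} = e_{ij}(-r)$, obtained by setting $s = -r$; this is exactly what is needed to assemble the commutators in the remaining two parts.

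For part (2), with $i, j, k$ distinct, I would expand the four factors of $[e_{ij}(r), e_{jk}(s)] = e_{ij}(r)\,e_{jk}(s)\,e_{ij}(-r)\,e_{jk}(-s)$ by first computing $e_{ij}(r)e_{jk}(s) = I_n + rE_{ij} + sE_{jk} + rsE_{ik}$ and $e_{ij}(-r)e_{jk}(-s) = I_n - rE_{ij} - sE_{jk} + rsE_{ik}$, then multiplying the two. Every product of two matrix units occurring among the cross terms vanishes except $E_{ij}E_{jk} = E_{ik}$, precisely because $i, j, k$ are distinct, and the surviving terms combine to $I_n + rsE_{ik} = e_{ik}(rs)$. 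Part (3) is the easiest case: when $i, j, k, l$ are all distinct, both $E_{ij}E_{kl}$ and $E_{kl}E_{ij}$ vanish (as $j \neq k$ and $l \neq i$), so $e_{ij}(r)$ and $e_{kl}(s)$ already commute and their commutator is $I_n$.

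There is no genuine obstacle here; the proof is a routine bookkeeping exercise. The only point that demands any care is tracking exactly which products of matrix units survive — this is entirely dictated by the distinctness hypotheses on the indices — and ensuring that the scalar $rs$ is never inadvertently swapped to $sr$. Both concerns are handled automatically once the entire computation is organized around the single identity $E_{ij}E_{kl} = \delta_{jk}E_{il}$.
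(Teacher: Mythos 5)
Your proof is correct. The paper does not actually prove this lemma itself---it simply cites Lemma 9.4 of Magurn's book \cite{mag}---and your direct expansion via matrix units, using $E_{ij}E_{kl}=\delta_{jk}E_{il}$, the distinctness of the indices to kill all unwanted cross terms, and the care to keep the scalar product $rs$ in left-to-right order (so the computation is valid over a noncommutative ring, with the commutator convention $[a,b]=aba^{-1}b^{-1}$ used in the paper), is precisely the standard verification that the cited reference supplies.
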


By Lemma \ref{ecom}, the group $E_{n}(R)$ is finitely generated when the
ring $R$ is finitely generated. Moreover, when $n\geq 3,$ the group $%
E_{n}(R) $ is normally generated by any elementary matrix $e_{ij}(1).$ We
will use such fact several times in Section 4.

The commutator formulas can be used to define Steinberg group as follows.
For $n\geq 3,$ the Steinberg group $\mathrm{St}_{n}(R)$ is the group
generated by the symbols $\{x_{ij}(r):1\leq i\neq j\leq n,r\in R\}$ subject
to the following relations:

\begin{enumerate}
\item[(St$1$)] $x_{ij}(r+s)=x_{ij}(r)x_{ij}(s);$

\item[(St$2$)] $[e_{ij}(r),e_{jk}(s)]=e_{ik}(rs)$ for $i\neq k;$

\item[(St$3)$] $[e_{ij}(r),e_{kl}(s)]=1$ for $i\neq l,j\neq k.$
\end{enumerate}

\noindent There is an obvious surjection $\mathrm{St}_{n}(R)\rightarrow
E_{n}(R)$ defined by $x_{ij}(r)\longmapsto e_{ij}(r).$

For any ideal $I\vartriangleleft R,$ let $p:R\rightarrow R/I$ be the
quotient map. Then the map $p$ induces a group homomorphism $p_{\ast }:%
\mathrm{St}_{n}(R)\rightarrow \mathrm{St}_{n}(R/I).$ Denote by $\mathrm{St}%
_{n}(R,I)$ (\textsl{resp., }$E_{n}(R,I)$) the subgroup of $\mathrm{St}%
_{n}(R) $ (\textsl{resp., }$E_{n}(R)$) normally generated by elements of the
form $x_{ij}(r)$ (\textsl{resp.,} $e_{ij}(r)$) for $r\in I$ and $1\leq i\neq
j\leq n.$ In fact, $\mathrm{St}_{n}(R,I)$ is the kernel of $p_{\ast }$ (cf.
Lemma 13.18 in Magurn \cite{mag} and its proof). However, $E_{n}(R,I)$ may
not be the kenel of $E_{n}(R)\rightarrow E_{n}(R/I)$ induced by $p.$

We define the groups $U_{2n}(R,\Lambda )$ and $EU_{2n}(R,\Lambda )$ as
follows. Let $R$ be a general ring and assume that an anti-automorphism $%
\ast :x\mapsto x^{\ast }$ is defined on $R$ such that $x^{\ast \ast
}=\varepsilon x\varepsilon ^{\ast }$ for some unit $\varepsilon =\varepsilon
^{-1}$ of $R$ and every $x$ in $R$. It determines an anti-automorphism of
the ring $M_{n}R$ of all $n\times $ $n$ matrices $(x_{ij})$ by $%
(x_{ij})^{\ast }=(x_{ji}^{\ast })$.

Set $R_{\varepsilon }=\{x-x^{\ast }\varepsilon |\,\,x\in R\}$ and $%
R^{\varepsilon }=\{x\in R\,|\,\,x=-x^{\ast }\varepsilon \}$. If some
additive subgroup $\Lambda $ of $(R,+)$ satisfies:{}{}

\begin{enumerate}
\item[(i)] $r^{\ast }\Lambda r\subset \Lambda $ for all $r\in R$;

\item[(ii)] $R_{\varepsilon }\subset \Lambda \subset R^{\varepsilon },$
\end{enumerate}

\noindent we will call $\Lambda $ a form and $(\Lambda ,\ast ,\varepsilon )$
a form parameter on $R$. Usually $(R,\Lambda )$ is called a form ring. Let $%
\Lambda _{n}=\{(a_{ij})\in M_{n}R|\,\,a_{ij}=-a_{ji}^{\ast }\varepsilon $
for $i\neq j$ and $a_{ii}\in \Lambda \}.$

As in \cite{bak}, for an integer $n\geq 1$ we define the unitary group%
\newline
\begin{equation*}
U_{2n}(R,\Lambda )=\left\{ \left(
\begin{array}{ll}
\alpha & \beta \\
\gamma & \delta%
\end{array}%
\right) \in GL_{2n}R\,|\,\,\alpha ^{\ast }\delta +\gamma ^{\ast }\varepsilon
\beta =I_{n},\,\,\,\alpha ^{\ast }\gamma ,\,\,\,\beta ^{\ast }\delta \in
\Lambda _{n}\right\} .
\end{equation*}%
Sometimes, the unitary group $U_{2n}(R,\Lambda )$ is also called the
quadratic group \cite{bak} or the pseudo-orthogonal group \cite{vh}.

It can be easily seen that the inverse of a unitary matrix has the form
\begin{equation*}
\left(
\begin{array}{ll}
\alpha & \beta \\
\gamma & \delta%
\end{array}%
\right) ^{-1}=\left(
\begin{array}{ll}
\varepsilon ^{\ast }\delta ^{\ast }\varepsilon & \varepsilon ^{\ast }\beta
^{\ast } \\
\gamma ^{\ast }\varepsilon & \alpha ^{\ast }%
\end{array}%
\right) .
\end{equation*}%
The unitary group $U_{2n}(R,\Lambda )$ has many important special cases, as
follows.

\begin{itemize}
\item When $\Lambda =R$, $U_{2n}(R,\Lambda )$ is the symplectic group. This
can only happen when $\varepsilon =-1$ and $\ast =\mathrm{id}_{R}$ ($R$ is
commutative) the trivial anti-automorphism.

\item When $\Lambda =0$, $U_{2n}(R,\Lambda )$ is the ordinary orthogonal
group. This can only happen when $\varepsilon =1$ and $\ast =\mathrm{id}%
_{R}\ $($R$ is commutative) as well.

\item When $\Lambda =R^{\varepsilon }$ and $\ast \neq \mathrm{id}_{R}$, $%
U_{2n}(R,\Lambda )$ is the classical unitary group
\begin{equation*}
U_{2n}=\{A\in GL_{2n}R|\,\ A^{\ast }\varphi _{n}A=\varphi _{n}\},
\end{equation*}%
where
\begin{equation*}
\varphi _{n}=\left(
\begin{array}{cc}
0 & I_{n} \\
\varepsilon I_{n} & 0%
\end{array}%
\right) .
\end{equation*}
\end{itemize}

Let $E_{ij}$ denote the $n\times n$ matrix with $1$ in the $(i,j)$-th
position and zeros elsewhere. Then $e_{ij}(a)=I_{n}+aE_{ij}$ is an
elementary matrix, where $I_{n}$ is the identity matrix of size $n$. With $n$
fixed, for any integer $1\leq k\leq 2n,$ set $\sigma k=k+n$ if $k\leq n$ and
$\sigma k=k-n$ if $k>n$. For $a\in R$ and $1\leq i\neq j\leq 2n,$ we define
the elementary unitary matrices $\rho _{i,\sigma i}(a)$ and $\rho _{ij}(a)$
with $j\neq \sigma i$ as follows:

\begin{itemize}
\item $\rho _{i,\sigma i}(a)=I_{2n}+aE_{i,\sigma i}$ with $a\in \Lambda $
when $n+1\leq i$ and $a^{\ast }\in \Lambda $ when $i\leq n$;

\item $\rho _{ij}(a)=\rho _{\sigma j,\sigma i}(-a^{\prime
})=I_{2n}+aE_{ij}-a^{\prime }E_{\sigma j,\sigma i}$ with $a^{\prime
}=a^{\ast }$ when $i,j\leq n$; $a^{\prime }={\varepsilon ^{\ast }a^{\ast }}$
when $i\leq n<j$; $a^{\prime }=a^{\ast }\varepsilon $ when $j\leq n<i$; and $%
a^{\prime }=\varepsilon ^{\ast }a^{\ast }\varepsilon $ when $n+1\leq i,j$.
\end{itemize}

The following lemma displays the commutator formulas for $EU_{n}(R,\Lambda )$
(cf. Lemma 2.1 in \cite{vh})

\begin{lemma}
\label{2.1} The following identities hold for elementary unitary matrices $%
(1\leq i\neq j\leq 2n):$

\emph{1).} $\rho_{ij}(a+b) = \rho_{ij}(a) \rho_{ij}(b)$;

\emph{2). }$[\rho_{ij}(a), \rho_{jk}(b)] = \rho_{ik}(ab)$ when $i,\,j, \,k,
\,\sigma i, \,\sigma j, \,\sigma k$ are distinct;

\emph{3). }$[\rho_{ij}(a), \rho_{j,\sigma i}(b)] = \rho_{i, \sigma i}(ab -
c) $ when $j \neq \sigma i,$ where $c = b^* a^* \epsilon$ when $n + 1 \leq i$
and $c = \epsilon^* b^* a^*$ when $i \leq n;$

\emph{4).} $[\rho _{ij}(a),\rho _{j,\sigma j}(b)]=\rho _{i,\sigma j}(ab)\rho
_{i,\sigma i}(c)$ when $j\neq \sigma i,$ where $b^{\ast }\in \Lambda $ and $%
c=aba^{\ast }$ when $i,j\leq n,$ $b^{\ast }\in \Lambda $ and $c=aba^{\ast
}\epsilon $ when $j\leq n<i,$ $b\in \Lambda $ and $c=-ab^{\ast }a^{\ast }$
when $i\leq n<j,$ $b\in \Lambda $ and $c=-ab^{\ast }a^{\ast }\epsilon $ when
$n+1\leq i,j.$
\end{lemma}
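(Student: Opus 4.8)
The plan is to treat every elementary unitary matrix as a unipotent matrix and to reduce all four identities to bookkeeping with the matrix units $E_{pq}$, using only the multiplication rule $E_{pq}E_{rs}=\delta_{qr}E_{ps}$ together with the form-parameter axioms. The one structural fact that makes the computation finite is that every generator $\rho$ occurring in the lemma can be written as $\rho=I_{2n}+N$ with $N^{2}=0$: for a short-root generator $\rho_{i,\sigma i}(a)=I_{2n}+aE_{i,\sigma i}$ this is immediate from $i\neq\sigma i$, and for a long-root generator $\rho_{ij}(a)=I_{2n}+aE_{ij}-a'E_{\sigma j,\sigma i}$ (with $j\neq\sigma i$) each of the four products $E_{ij}E_{ij}$, $E_{ij}E_{\sigma j,\sigma i}$, $E_{\sigma j,\sigma i}E_{ij}$, $E_{\sigma j,\sigma i}E_{\sigma j,\sigma i}$ vanishes because $\sigma$ fixes no index and $i\neq j$. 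Hence $\rho^{-1}=I_{2n}-N$, consistent with the inverse formula already recorded, and for two generators $u=I+X$, $v=I+Y$ the commutator collapses to the finite expression
\begin{equation*}
[u,v]=I+(XY-YX)-XYX+YXY+XYXY,
\end{equation*}
which I will evaluate term by term in each case.

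Identity (1) is then immediate, since every cross term in $\rho_{ij}(a)\rho_{ij}(b)$ contains one of the vanishing products above and $a\mapsto a'$ is additive. Identity (2), where $i,j,k,\sigma i,\sigma j,\sigma k$ are pairwise distinct, is the \emph{generic} case: the index chains in $XYX$, $YXY$ and $XYXY$ never close up, so only $XY-YX=abE_{ik}-b'a'E_{\sigma k,\sigma i}$ survives, and matching against $\rho_{ik}(ab)=I+abE_{ik}-(ab)'E_{\sigma k,\sigma i}$ reduces to the single anti-multiplicativity relation $(ab)'=b'a'$ for the auxiliary operation $a\mapsto a'$, which is a direct consequence of $(xy)^{*}=y^{*}x^{*}$ and $\varepsilon\varepsilon^{*}=1$. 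In effect this is Lemma~\ref{ecom}(2) applied simultaneously in the two disjoint index blocks.

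The substance of the proof lies in identities (3) and (4). In (3) only the quadratic part contributes: a direct expansion gives $XY=abE_{i,\sigma i}$ and $YX=b'a'E_{i,\sigma i}$ (every other matrix-unit product vanishing because $j\neq\sigma i$ and $\sigma$ fixes nothing), while all three cubic terms vanish, so $[u,v]=\rho_{i,\sigma i}(ab-b'a')$ and it remains only to identify $b'a'$ with the stated $c$. In (4) the partner is a short-root generator $\rho_{j,\sigma j}(b)=I+bE_{j,\sigma j}$, and here the cubic term genuinely survives: one computes $XY-YX=abE_{i,\sigma j}+ba'E_{j,\sigma i}$ and $-XYX=aba'E_{i,\sigma i}$ (with $YXY=XYXY=0$), so that
\begin{equation*}
[u,v]=I+abE_{i,\sigma j}+ba'E_{j,\sigma i}+aba'E_{i,\sigma i},
\end{equation*}
which must be matched against $\rho_{i,\sigma j}(ab)\,\rho_{i,\sigma i}(c)=I+abE_{i,\sigma j}-(ab)'E_{j,\sigma i}+cE_{i,\sigma i}$.

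I expect the main obstacle to be the clerical verification that these coefficients agree once the form-parameter axioms are applied, together with the attendant case analysis. Since the auxiliary coefficient $a'$ takes four values according to the positions of $i$ and $j$ relative to $n$, each of (3) and (4) splits into the stated sub-cases, and the simplifications repeatedly invoke $\varepsilon^{-1}=\varepsilon$, $\varepsilon^{*}=\varepsilon$ (which follows from $1^{**}=1$), $x^{**}=\varepsilon x\varepsilon^{*}$ and the anti-automorphism law. Two points deserve care: matching the $E_{j,\sigma i}$-coefficient in (4) forces the identity $ba'=-(ab)'$, which is precisely the relation $b=-\varepsilon^{*}b^{*}$ guaranteed by the membership hypothesis $b^{*}\in\Lambda\subset R^{\varepsilon}$ (so the hypotheses on $b$ are genuinely used); and confirming that the resulting $c$ lies in $\Lambda$, so that $\rho_{i,\sigma i}(c)$ is defined, is where axiom (i), $r^{*}\Lambda r\subset\Lambda$, is needed. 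The risk throughout is one of signs and $\varepsilon$-placement across the sub-cases rather than anything conceptual, so I would carry out each identity as a single generic matrix-unit calculation and only at the end specialize the coefficients to the four position regimes.
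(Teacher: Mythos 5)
The paper never proves this lemma: it is imported wholesale from Vaserstein--You (Lemma 2.1 of \cite{vh}), so there is no internal argument to compare against, and a direct verification such as yours is exactly what the cited source carries out. Your proposal is correct, and its substantive claims check out. Every generator is indeed $I+N$ with $N^{2}=0$ (for $\rho_{ij}(a)=I+aE_{ij}-a'E_{\sigma j,\sigma i}$ all four matrix-unit products vanish because $i\neq j$ and $\sigma$ is fixed-point free), so $[I+X,I+Y]=I+(XY-YX)-XYX+YXY+XYXY$ as you state. In (2) only $XY-YX=abE_{ik}-b'a'E_{\sigma k,\sigma i}$ survives and the matching reduces to $(ab)'=b'a'$, which holds in all position regimes since $\varepsilon^{*}=\varepsilon$ and $\varepsilon^{2}=1$. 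In (3) one gets $XY=abE_{i,\sigma i}$, $YX=b'a'E_{i,\sigma i}$, all higher terms vanish, and $b'a'$ equals the stated $c$ in each of the four $(i,j)$-regimes (the $j$-dependence cancels, which is why the lemma's $c$ depends only on the position of $i$). In (4) your expansion $I+abE_{i,\sigma j}+ba'E_{j,\sigma i}+aba'E_{i,\sigma i}$ is right, and the two matching conditions are $ba'=-(ab)'$ and $c=aba'$; the first follows from the skew-relation $b=-b^{*}\varepsilon$ (resp. $b=-\varepsilon b^{*}$) supplied by the hypothesis $b\in\Lambda\subset R^{\varepsilon}$ (resp. $b^{*}\in\Lambda\subset R^{\varepsilon}$), and the second then reproduces the four stated values of $c$. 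Two clerical points you deferred do work out but should be recorded to complete the proof. First, in (3) well-definedness of the right-hand side uses the lower bound of the form parameter, not axiom (i): $ab-c=ab-(ab)^{*}\varepsilon\in R_{\varepsilon}\subset\Lambda$ when $n+1\leq i$, and $(ab-c)^{*}=(ab)^{*}-\varepsilon(ab)\in R_{\varepsilon}$ when $i\leq n$. Second, in (4) the membership $c\in\Lambda$ (or $c^{*}\in\Lambda$) does follow from axiom (i), but the conjugating element must be chosen with the $\varepsilon$'s in place; e.g.\ for $n+1\leq i,j$ one rewrites $c=ab\varepsilon a^{*}\varepsilon$ and applies $r^{*}\Lambda r\subset\Lambda$ with $r=\varepsilon a^{*}\varepsilon$, for which $r^{*}=a$. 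With these two additions your outline is a complete and correct proof, and it has the merit (relevant to the paper's closing remark about Steinberg groups) of exhibiting the identities as purely formal consequences of the matrix-unit calculus and the form-ring axioms.
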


When the ring $R$ is finitely generated and $\Lambda /R_{\varepsilon }$ is a
finitely generated $R$-module by right multiplications, the above commutator
formulas show that $E_{n}(R,\Lambda )$ is finitely generated (cf. \cite{ho},
Section 9.2B). Our later discussions will base on the following lemma.

\begin{lemma}
\label{prop}Let $R$ be a ring and assume that the characteristic of $R$ is
not $2.$ For two integers $i,j$ such that $1\leq i\neq j\leq n,$ let $A_{ij}$
be the diagonal matrix whose $(i,i)$-th and $(j,j)$-th entries are $-1$ and
other diagonal entries are $1.$ Then the subgroup generated by the elements%
\begin{equation*}
A_{12},A_{23},\ldots ,A_{n-1,n}
\end{equation*}%
in $E_{n}(R)$ is isomorphic to the abelian group $\mathbb{Z}_{2}^{n-1}=%
\mathbb{Z}_{2}\times \mathbb{Z}_{2}\times \cdots \times \mathbb{Z}_{2}$,
i.e. $n-1$ copies of groups of two elements.
\end{lemma}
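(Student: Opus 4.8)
The plan is to split the argument into a membership claim and a structural claim: first that each $A_{i,i+1}$ genuinely lies in $E_n(R)$, and then that the generators are ``independent'' in the precise sense needed to obtain the full group $\mathbb{Z}_2^{n-1}$ rather than a proper quotient. For the first stage I would invoke the classical Whitehead-type identity. Setting $w_{i,i+1}=e_{i,i+1}(1)\,e_{i+1,i}(-1)\,e_{i,i+1}(1)$, a product of three elementary generators and hence an element of $E_n(R)$, a direct computation in the $(i,i+1)$-block shows that $w_{i,i+1}$ restricts there to $\left(\begin{smallmatrix}0&1\\-1&0\end{smallmatrix}\right)$ and is the identity on all other coordinates. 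Squaring then gives $w_{i,i+1}^2=A_{i,i+1}$, so each $A_{i,i+1}\in E_n(R)$.

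For the structural stage, note first that all the $A_{ij}$ are diagonal and therefore commute, so $H:=\langle A_{12},\dots,A_{n-1,n}\rangle$ is abelian with each generator an involution; thus $H$ is a quotient of $\mathbb{Z}_2^{n-1}$, and everything reduces to showing that no nontrivial product of distinct generators equals $I_n$. This is exactly where I would use $\mathrm{char}(R)\neq 2$: since then $-1\neq 1$ and $-1$ is central of order $2$ in $R$, the diagonal matrices with entries in $\{\pm 1\}$ form a subgroup $D\cong(\mathbb{Z}_2)^n$ of $\mathrm{GL}_n(R)$ containing $H$, and the assignment $\mathrm{diag}((-1)^{v_1},\dots,(-1)^{v_n})\mapsto(v_1,\dots,v_n)$ defines a group isomorphism $\phi\colon D\to\mathbb{F}_2^n$. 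Under $\phi$ the element $A_{i,i+1}$ is sent to $e_i+e_{i+1}$, so $\phi(H)$ is the $\mathbb{F}_2$-span of $\{e_i+e_{i+1}:1\leq i\leq n-1\}$.

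It then remains to check that these $n-1$ vectors are linearly independent over $\mathbb{F}_2$: in a vanishing combination $\sum_i c_i(e_i+e_{i+1})=0$ the coefficient of $e_1$ forces $c_1=0$, and reading off the coefficients of $e_2,e_3,\dots$ in turn forces $c_2=c_3=\cdots=0$ by an immediate induction. Hence $\phi(H)\cong\mathbb{F}_2^{n-1}$, and therefore $H\cong\mathbb{Z}_2^{n-1}$, as claimed. The only genuinely substantive points are the membership identity $A_{i,i+1}=w_{i,i+1}^2$, which is the standard elementary-matrix computation, and the correct placement of the hypothesis $\mathrm{char}(R)\neq 2$ — without it $-1=1$, the generators collapse to $I_n$, and the statement fails. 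Neither step is deep, and the linear-independence check is precisely what rules out $H$ being a proper quotient of $\mathbb{Z}_2^{n-1}$.
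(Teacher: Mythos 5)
Your proposal is correct and follows essentially the same route as the paper: the membership step uses exactly the paper's identity $A_{i,i+1}=\bigl(e_{i,i+1}(1)e_{i+1,i}(-1)e_{i,i+1}(1)\bigr)^2$, and the structural step merely fills in, via the $\mathbb{F}_2$-linear independence of the vectors $e_i+e_{i+1}$, what the paper dismisses as ``not hard to see.'' Your explicit placement of the hypothesis $\mathrm{char}(R)\neq 2$ (so that $-1\neq 1$ and the $\{\pm 1\}$-diagonal subgroup is honestly $(\mathbb{Z}_2)^n$) is a welcome precision, but it is not a different method.
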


\begin{proof}
First note that there is an equality
\begin{equation*}
A_{12}=e_{12}(1)e_{21}(-1)e_{12}(1)e_{12}(1)e_{21}(-1)e_{12}(1),
\end{equation*}%
which shows $A_{12}$ in an element of $E_{n}(R).$ Similar arguments show
that all other elements $A_{i,i+1}$ are also in $E_{n}(R).$ It is not hard
to see that the elements are pairwise commutative and the subgroup generated
is isomorphic to the $n-1$ copies of $\mathbb{Z}_{2}$.
\end{proof}

\begin{theorem}
\label{normal}Let $R$ be a ring with identity and $n\geq 3$ an integer.
Suppose that the characteristic of $R$ is not $2$ and $G$ is a normal
subgroup in $E_{n}(R)$ containing a noncentral element in the subgroup
generated $\mathbb{Z}_{2}^{n-1}$ by $A_{12},A_{23},\ldots ,A_{n-1,n}$ in
Lemma \ref{prop}. Then $G$ contains $E_{n}(R,2R)$ as a normal subgroup,%
\textsl{\ i.e.,}
\begin{equation*}
E_{n}(R,2R)\unlhd G.
\end{equation*}
\end{theorem}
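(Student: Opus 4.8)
The plan is to exploit the interaction between the sign matrices $A_{i,i+1}$ and the elementary generators $e_{ij}(r)$ under conjugation, together with the normality of $G$ and the commutator calculus of Lemma \ref{ecom}. The argument splits into two stages: first producing a single family $\{e_{ij}(s):s\in 2R\}$ inside $G$ from the given noncentral element, and then spreading this family across all index positions to recover every normal generator of $E_{n}(R,2R)$.

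First I would analyze the hypothesis. An element of the subgroup $\mathbb{Z}_{2}^{n-1}$ generated by $A_{12},\ldots,A_{n-1,n}$ is a diagonal matrix $D=\mathrm{diag}(\epsilon_{1},\ldots,\epsilon_{n})$ with each $\epsilon_{k}\in\{1,-1\}$. Since the scalar matrices $\pm I_{n}$ are central in $E_{n}(R)$, a noncentral such $D$ cannot be $\pm I_{n}$, so its diagonal must contain at least one $+1$ and at least one $-1$; I fix indices $i\neq j$ with $\epsilon_{i}=1$ and $\epsilon_{j}=-1$, so that $\epsilon_{i}\epsilon_{j}=-1$. The hypothesis $\mathrm{char}(R)\neq 2$ is what makes these sign matrices genuinely nontrivial and, as will be seen, is exactly what produces entries in $2R$.

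Next I would run the conjugation computation. Since each $\epsilon_{k}=\pm 1$ is central, conjugation gives $D\,e_{ij}(r)\,D^{-1}=e_{ij}(\epsilon_{i}\epsilon_{j}r)=e_{ij}(-r)$, whence, using part (1) of Lemma \ref{ecom},
\[
[D,e_{ij}(r)]=D\,e_{ij}(r)\,D^{-1}\,e_{ij}(r)^{-1}=e_{ij}(-r)\,e_{ij}(-r)=e_{ij}(-2r).
\]
Because $D\in G$ and $G$ is normal in $E_{n}(R)$, this commutator lies in $G$ for every $r\in R$; as $-2R=2R$, I conclude $e_{ij}(s)\in G$ for all $s\in 2R$, for this one fixed pair $(i,j)$.

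Finally I would propagate across all positions. Using $n\geq 3$ there is always a third index available, and the identities $[e_{ki}(1),e_{ij}(s)]=e_{kj}(s)$ and $[e_{ij}(s),e_{jk}(1)]=e_{ik}(s)$ from part (2) of Lemma \ref{ecom}, combined with the normality of $G$ (each such commutator of an element of $E_{n}(R)$ with an element of $G$ again lies in $G$), let me move the entry $s\in 2R$ into any row or column index. Iterating these moves reaches every pair $(p,q)$ with $p\neq q$, so that $e_{pq}(s)\in G$ for all $s\in 2R$ and all $p\neq q$. These are precisely the normal generators of $E_{n}(R,2R)$; since $G$ contains them all and is normal in $E_{n}(R)$, it contains their normal closure, giving $E_{n}(R,2R)\subseteq G$. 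The inclusions $E_{n}(R,2R)\unlhd E_{n}(R)$ and $E_{n}(R,2R)\subseteq G\subseteq E_{n}(R)$ then force $E_{n}(R,2R)\unlhd G$. I expect the propagation step to be the only place requiring care, since one must verify that the chain of commutators genuinely reaches all $n(n-1)$ ordered pairs (already for $n=3$ a short explicit chain through the missing slots is needed), whereas the conceptual heart of the proof is the single conjugation identity producing the factor $-2$.
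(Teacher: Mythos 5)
Your proposal is correct and follows essentially the same route as the paper: the paper also extracts $e_{12}(2r)=e_{12}(r)\cdot A\cdot e_{12}(-r)\cdot A^{-1}\in G$ from a noncentral sign matrix $A$ (your commutator $[D,e_{ij}(r)]=e_{ij}(-2r)$ is the same identity), then spreads $e_{ij}(2r)$ to all positions via the commutator formulas of Lemma \ref{ecom} and normality of $G$. Your write-up merely makes explicit the propagation step that the paper leaves as a one-line appeal to Lemma \ref{ecom}.
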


\begin{proof}
Let $A\in G$ be a noncentral element of $E_{n}(R)$ in $\mathbb{Z}_{2}^{n-1},$
the subgroup generated by $A_{12},A_{23},\ldots ,A_{n-1,n}.$ In other words,
$A\neq \mathrm{diag}(1,\ldots ,1),$ $\mathrm{diag}(-1,\ldots ,-1).$ Without
loss of generality, we assume that the first three diagonal entries of $A$
are $1,-1,-1$ in order. Then for any element $r\in R,$ we have that the
matrix $e_{12}(2r)$ is the product
\begin{equation*}
e_{12}(r)\cdot A\cdot e_{12}(-r)\cdot A^{-1},
\end{equation*}%
which is an element in $G.$ By the commutator formulas in Lemma \ref{ecom},
we see that for any two integers $i,j$ with $1\leq i\neq j\leq n,$ the
matrix $e_{ij}(2r)\in G.$ This shows that $E_{n}(R,2R)$ is a normal subgroup
of $G.$
\end{proof}

\subsection{$K$-theory and stable ranges\label{stab}}

In this subsection, we briefly recall the definitions of algebraic $K_{1}$
and unitary $KU_{1}$ groups. The standard references are also the textbook
of Magurn \cite{mag} (for $K_{1}$), the book of Hahn and O'Meara \cite{ho}
and the book of Bak \cite{bak} (for $KU_{1}$).

We define $K_{1}$ first. For a ring $R$, let $\mathrm{GL}_{n}(R)\rightarrow
\mathrm{GL}_{n+1}(R)$ be the inclusion defined by
\begin{equation*}
A\longmapsto
\begin{pmatrix}
A & 0 \\
0 & 1%
\end{pmatrix}%
.
\end{equation*}%
The group $\mathrm{GL}(R)$ is defined to be the direct limit of%
\begin{equation*}
\mathrm{GL}_{1}(R)\subset \mathrm{GL}_{2}(R)\subset \cdots \subset \mathrm{GL%
}_{n}(R)\subset \cdots .
\end{equation*}%
Similarly, the group $E(R)$ is the direct limit of%
\begin{equation*}
E_{2}(R)\subset E_{3}(R)\subset \cdots \subset E_{n}(R)\subset \cdots .
\end{equation*}%
According to the Whitehead lemma, the group $E(R)$ is normal in $\mathrm{GL}%
(R).$ The $K$-theory group $K_{1}(R)$ is defined as $\mathrm{GL}(R)/E(R).$

The stable range $\mathrm{sr}(R)$ is defined as follows. Let $n$ be a
positive integer and $R^{n}$ the free $R$-module of rank $n$ with standard
basis. A vector $(a_{1},\ldots ,a_{n})$ in $R^{n}$ is called \emph{right
unimodular} if there are elements $b_{1},\ldots ,b_{n}\in R$ such that $%
a_{1}b_{1}+\cdots +a_{n}b_{n}=1$. The \emph{stable range condition} $\mathrm{%
sr}_{m}$ says that if $(a_{1},\ldots ,a_{m+1})$ is a right unimodular vector
then there exist elements $b_{1},\ldots ,b_{m}\in R$ such that $%
(a_{1}+a_{m+1}b_{1},\ldots ,a_{m}+a_{m+1}b_{m})$ is right unimodular. It
follows easily that $\mathrm{sr}_{m}\Rightarrow \mathrm{sr}_{n}$ for any $%
n\geq m$. The \emph{stable range} $\mathrm{sr}(R)$ of $R$ is the smallest
number $m$ such that $\mathrm{sr}_{m}$ holds. If $R$ is commutative, the
Krull dimension $\mathrm{Kdim}(R)$ of $R$ is the number of steps $r$ in a
longest chain of prime ideals%
\begin{equation*}
A_{0}\varsubsetneq A_{1}\varsubsetneq \cdots \varsubsetneq A_{r}
\end{equation*}%
in $R$. It is well-known that $\mathrm{sr}(R)\leq \mathrm{Kdim}(R)+1$ (cf.
Section 4E of \cite{mag}). When $R$ is a Dedekind domain, then $\mathrm{sr}%
(R)\leq 2.$ When $G$ is finite and $R$ is a Dedekind domain, the stable
range $\mathrm{sr}(R[G])\leq 2$ (cf. 41.23 of page 98 in \cite{rep}). The
stable range is not bigger than most other famous ranges, e.g. absolute
stable range, $1+$ maximal spectrum dimension, $1+$ Bass-Serre dimension
(cf. \cite{pec}) and so on.

The following result on stabilization of $K_{1}$ is Theorem 10.15 in \cite%
{mag}.

\begin{lemma}
\label{2.2}Let $R$ be a ring of finite stable range $\mathrm{sr}(R).$ Then
for an integer $n\geq \mathrm{sr}(R)+1,$ the natural map
\begin{equation*}
\mathrm{GL}_{n}(R)/E_{n}(R)\rightarrow K_{1}(R)
\end{equation*}%
is an isomorphism.
\end{lemma}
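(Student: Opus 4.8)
The plan is to recognize this as the classical $K_1$-stability theorem of Bass and Vaserstein and to prove it in two halves, \emph{surjective stability} and \emph{injective stability}, each powered by the stable range condition $\mathrm{sr}_m$. First I would note the map is well defined: the stabilization inclusion $\mathrm{GL}_m(R)\hookrightarrow \mathrm{GL}_{m+1}(R)$ carries elementary matrices to elementary matrices, so it descends to $\mathrm{GL}_m(R)/E_m(R)\to \mathrm{GL}_{m+1}(R)/E_{m+1}(R)$ and, in the colimit, to the natural map $\mathrm{GL}_n(R)/E_n(R)\to \mathrm{GL}(R)/E(R)=K_1(R)$. It then suffices to show this map is bijective once $n\geq \mathrm{sr}(R)+1$.

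For surjectivity I would prove $\mathrm{GL}_{m+1}(R)=\mathrm{GL}_m(R)\cdot E_{m+1}(R)$ for every $m\geq \mathrm{sr}(R)$, which passes to the colimit. Given $\gamma\in \mathrm{GL}_{m+1}(R)$, its last row $(a_1,\dots,a_{m+1})$ is right unimodular because $\gamma$ is invertible. Since $\mathrm{sr}_m$ holds, there are $b_1,\dots,b_m$ making $(a_1+a_{m+1}b_1,\dots,a_m+a_{m+1}b_m)$ right unimodular; right-multiplying $\gamma$ by the elementary matrices $e_{m+1,i}(b_i)\in E_{m+1}(R)$ realizes exactly these modifications on the last row. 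Choosing $d_i$ with $\sum_i(a_i+a_{m+1}b_i)d_i=1$, a further right multiplication by suitable $e_{i,m+1}(\,\cdot\,)$ turns the $(m+1,m+1)$ entry into $1$, after which the remaining entries of the last row and last column are cleared by elementary matrices, leaving a matrix of the form $\gamma'\oplus 1$ with $\gamma'\in \mathrm{GL}_m(R)$. Thus every class in $\mathrm{GL}_{m+1}(R)/E_{m+1}(R)$, and hence in $K_1(R)$, is represented in $\mathrm{GL}_m(R)$.

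For injectivity, the kernel of $\mathrm{GL}_n(R)/E_n(R)\to K_1(R)$ is $(\mathrm{GL}_n(R)\cap E(R))/E_n(R)$, and since $E(R)=\bigcup_{m\geq n}E_m(R)$ it is enough to prove $\mathrm{GL}_n(R)\cap E_m(R)=E_n(R)$ for all $m\geq n$. I would reduce this by a telescoping induction to the single \emph{injective-stability} assertion that $\mathrm{GL}_m(R)\cap E_{m+1}(R)=E_m(R)$ whenever $m\geq \mathrm{sr}(R)+1$: if $\alpha\in \mathrm{GL}_n(R)\cap E_{m+1}(R)$ with $m\geq n\geq \mathrm{sr}(R)+1$, then $\alpha\in \mathrm{GL}_m(R)\cap E_{m+1}(R)=E_m(R)$, and descending one level at a time lands $\alpha$ in $\mathrm{GL}_n(R)\cap E_n(R)=E_n(R)$.

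I expect the injective-stability step to be the main obstacle, as it is the genuinely hard part of the theorem. To show that $\alpha\oplus 1\in E_{m+1}(R)$ forces $\alpha\in E_m(R)$, I would express $\alpha\oplus 1$ as a word in the generators $e_{ij}(r)$ and systematically comb the generators $e_{m+1,j}(r)$ and $e_{i,m+1}(r)$ involving the last index toward one end of the word, using the commutator identities of Lemma \ref{ecom} to rewrite their products. The stable range condition is then invoked to normalize the unimodular rows that arise, so that the accumulated contribution from the last row and column can be absorbed into $E_m(R)$, the compensating factor necessarily lying in $E_m(R)$ because $\alpha\oplus 1$ fixes the last basis vector. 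Executing this bookkeeping --- keeping track of the relative elementary subgroups and verifying that no nontrivial $K_1$-class survives --- is the delicate point, and it is precisely here that the sharper hypothesis $n\geq \mathrm{sr}(R)+1$ (rather than merely $n\geq \mathrm{sr}(R)$) is required.
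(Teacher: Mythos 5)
The paper itself offers no proof of Lemma \ref{2.2}: it is quoted as Theorem 10.15 of Magurn's book \cite{mag}, i.e.\ the classical Bass--Vaserstein stability theorem for $K_1$, so your attempt has to stand on its own. Its first half does. The well-definedness remark and the surjective stability argument are complete and correct: the last row of an invertible matrix is right unimodular, the condition $\mathrm{sr}_m$ lets you make its truncation unimodular by right multiplication by the $e_{m+1,i}(b_i)$, and further column operations reduce $\gamma$ to $\gamma'\oplus 1$, giving $\mathrm{GL}_{m+1}(R)=\mathrm{GL}_m(R)\cdot E_{m+1}(R)$ for all $m\geq \mathrm{sr}(R)$. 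Equally correct is your formal reduction of injectivity to the single statement
\begin{equation*}
\mathrm{GL}_m(R)\cap E_{m+1}(R)=E_m(R)\qquad \text{for all } m\geq \mathrm{sr}(R)+1.
\end{equation*}

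The genuine gap is that this statement --- injective stability, which you yourself flag as the hard half --- is never proved, and the plan you describe for it would not work as stated. Write $P=\langle e_{i,m+1}(r):i\leq m\rangle$ for the last-column generators and $Q=\langle e_{m+1,j}(r):j\leq m\rangle$ for the last-row generators. Combing against $E_m(R)$ is harmless, since $E_m(R)$ normalizes both $P$ and $Q$; but the whole difficulty sits in moving a $Q$-generator past a $P$-generator with matching off-diagonal index, and there Lemma \ref{ecom} is silent: for $e_{m+1,i}(s)e_{i,m+1}(r)$ the hypotheses $i\neq l$, $j\neq k$ of the commutator formulas fail, and rewriting this product reintroduces generators of all three kinds, so the naive word-length induction does not terminate. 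No bookkeeping with relative elementary subgroups repairs this by itself; an additional idea is required. The known proofs (Bass, Vaserstein; this is what lies behind Theorem 10.15 in \cite{mag}) establish, by induction on word length with the stable range condition applied to the last column of partial products, a normal-form theorem expressing every element of $E_{m+1}(R)$ as a product of boundedly many blocks from $E_m(R)$, $P$, $Q$ in a fixed order (roughly of the shape $E_{m+1}(R)=E_m(R)\,P\,Q\,P$), and then conclude by evaluating an identity $\alpha\oplus 1=\epsilon\,u_1qu_2$ on the last standard basis vector, which $E_m(R)$ and $Q$ fix while $P$ moves it visibly. That decomposition is precisely where the hypothesis $m\geq \mathrm{sr}(R)+1$ is consumed; formulating and proving it \emph{is} the theorem, and it is absent from your proposal.
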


We define the unitary $K$-group $KU_{1}$ as follows. There is an obvious
embedding
\begin{equation*}
U_{2n}(R,\Lambda )\rightarrow U_{2(n+1)}(R,\Lambda ),
\end{equation*}%
\begin{equation*}
\left(
\begin{array}{ll}
\alpha & \beta \\
\gamma & \delta%
\end{array}%
\right) \longmapsto \left(
\begin{array}{llll}
\alpha & 0 & \beta & 0 \\
0 & 1 & 0 & 0 \\
\gamma & 0 & \delta & 0 \\
0 & 0 & 0 & 1%
\end{array}%
\right) .\
\end{equation*}%
Using this map, we shall consider $U_{2n}(R,\Lambda )$ as a subgroup of $%
U_{2(n+1)}(R,\Lambda )$. Similarly, define $U(R,\Lambda )$ as the direct
limit of
\begin{equation*}
U_{2}(R,\Lambda )\subset U_{4}(R,\Lambda )\subset \cdots \subset
U_{2n}(R,\Lambda )\subset \cdots
\end{equation*}%
and $E(R,\Lambda )$ as the direct limit of%
\begin{equation*}
EU_{2}(R)\subset EU_{4}(R)\subset \cdots \subset EU_{2n}(R)\subset \cdots .
\end{equation*}%
The unitary $K$-theory group $KU_{1}(R,\Lambda )$ is defined as $U(R,\Lambda
)/E(R,\Lambda ).$

The $\Lambda $\emph{-stable range condition} $\Lambda \mathrm{sr}_{m}$ of
Bak and Tang \cite{bt} says that $R$ satisfies $\mathrm{sr}_{m}$ and given
any unimodular vector $(a_{1},\ldots ,a_{m+1},b_{1},\ldots ,b_{m+1})\in
R^{2(m+1)}$ there exists a matrix $\gamma \in \Lambda _{m+1}$ such that $%
(a_{1},\ldots ,a_{m+1})+(b_{1},\ldots ,b_{m+1})\gamma $ is unimodular. By
\cite{bt}, $\Lambda \mathrm{sr}_{m}\Rightarrow \Lambda \mathrm{sr}_{n}$ for
all $n\geq m$. The $\Lambda $\emph{-stable range} $\Lambda \mathrm{sr}(R)$
of $(R,\Lambda )$ is the smallest number $m$ such that $\Lambda \mathrm{sr}%
_{m}$ holds. In general, the $\Lambda $-stable range is also not bigger than
$1+$ Bass-Serre dimension (cf. \cite{pec}).

The following result on stabilization of $KU_{1}$ was proved by Bak and Tang
in \cite{bt}.

\begin{lemma}
\label{2.4} Let $(R,\Lambda )$ be a form ring with finite $\Lambda $-stable
range $\Lambda \mathrm{sr}(R)$. Then for an integer $n\geq \Lambda \mathrm{sr%
}(R)+1$ the natural map
\begin{equation*}
U_{2n}(R,\Lambda )/EU_{2n}(R,\Lambda )\rightarrow KU_{1}(R,\Lambda )
\end{equation*}%
is an isomorphism$.$
\end{lemma}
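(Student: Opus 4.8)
The plan is to establish the isomorphism as the conjunction of two stability statements, exactly parallel to Bass's classical stabilization theorem for $K_{1}$ (Lemma \ref{2.2}): \emph{surjective stability}, that the natural map $U_{2n}(R,\Lambda)\to KU_{1}(R,\Lambda)$ is onto for $n\geq\Lambda\mathrm{sr}(R)+1$, and \emph{injective stability}, that its kernel is exactly $EU_{2n}(R,\Lambda)$ in the same range. (That $U_{2n}(R,\Lambda)/EU_{2n}(R,\Lambda)$ is abelian and that the inclusion induces a well-defined homomorphism to $KU_{1}(R,\Lambda)$ is the unitary Whitehead lemma, which I take as part of the setup.) Both halves are driven by the $\Lambda$-stable range condition $\Lambda\mathrm{sr}_{m}$, which is precisely the hypothesis that lets one shorten a unimodular vector by a form-respecting elementary correction. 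Throughout, the allowed moves are the elementary unitary matrices $\rho_{ij}(a)$ and their commutator identities from Lemma \ref{2.1}; the essential constraint, absent in the linear case, is that each such move couples an operation in the index $i$ with a dual operation in the index $\sigma i$, so every reduction must be carried out on mirror pairs of rows and columns at once.

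For surjective stability I would argue by downward induction on the matrix size. A class in $KU_{1}(R,\Lambda)$ is represented by some $g\in U_{2m}(R,\Lambda)$ with $m>n$, and it suffices to show that $g$ is equivalent modulo $EU_{2m}(R,\Lambda)$ to a matrix lying in $U_{2(m-1)}(R,\Lambda)$. The unitarity relations $\alpha^{\ast}\delta+\gamma^{\ast}\varepsilon\beta=I_{n}$ force the last hyperbolic pair of columns of $g$ to be unimodular. Applying the condition $\Lambda\mathrm{sr}_{m-1}$ (valid since $m-1\geq\Lambda\mathrm{sr}(R)$ throughout the reduction down to level $n$), I can left-multiply $g$ by suitable elementary unitary matrices so that the $(m,\sigma m)$ coordinates are cleared and the product becomes block-diagonal with an identity block in those positions. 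This places the representative in $U_{2(m-1)}(R,\Lambda)$, and iterating down to level $n$ finishes surjectivity.

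For injective stability I would take $g\in U_{2n}(R,\Lambda)$ that becomes trivial in $KU_{1}(R,\Lambda)$, i.e. $g\in EU_{2m}(R,\Lambda)$ for some $m\geq n$, and show $g\in EU_{2n}(R,\Lambda)$. The mechanism is to write $g$ as a word in the generators $\rho_{ij}(a)$ at level $m$ and then, using the commutator formulas of Lemma \ref{2.1} together with a unitary analogue of the Vaserstein--Whitehead normalization, push every occurrence of a coordinate index exceeding $n$ down into the first $n$ hyperbolic pairs. Here the stable range condition supplies the correction terms needed to absorb the high coordinates, while membership in $\Lambda_{m}$ (the requirements $a_{ii}\in\Lambda$ and $a_{ij}=-a_{ji}^{\ast}\varepsilon$ for $i\neq j$) guarantees that the reducing matrices genuinely lie in $EU_{2m}(R,\Lambda)$ rather than merely in $U_{2m}(R,\Lambda)$. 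Bak and Tang's strengthening of the ordinary condition $\mathrm{sr}_{m}$ to the $\Lambda$-version $\Lambda\mathrm{sr}_{m}$ is exactly what makes this absorption compatible with the quadratic form.

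The main obstacle is injective stability, which is strictly harder than surjectivity and is where the $\Lambda$-parameter bookkeeping is most delicate: each elementary move on the rows must be mirrored on the dual columns, and the diagonal corrections must be kept inside $\Lambda_{m}$ so that the reducing matrices lie in $EU_{2m}(R,\Lambda)$. Controlling these paired operations simultaneously, so that shortening one coordinate does not reintroduce length in its mirror coordinate, is the technical heart of the argument, and it is precisely for this reason that the refined condition $\Lambda\mathrm{sr}_{m}$ rather than the plain $\mathrm{sr}_{m}$ is imposed. The surjective half, by contrast, is a routine column-clearing induction once the stable range estimate is in hand.
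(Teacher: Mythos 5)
The first thing to say is that the paper itself does not prove this lemma: it is imported as a black box from Bak and Tang \cite{bt} (``Stability for Hermitian $K_1$''), exactly as the linear analogue, Lemma \ref{2.2}, is imported from Magurn's textbook. So the comparison is really between your sketch and the proof in that reference. Your two-part architecture --- surjective stability plus injective stability, both powered by the condition $\Lambda\mathrm{sr}_m$ --- does match the broad structure of the cited literature. But as a proof, your proposal has a genuine gap, and it sits exactly where you yourself locate the heart of the matter: the injectivity half is not an argument, it is a description of what an argument would have to accomplish. Concretely, the plan ``write $g\in EU_{2m}(R,\Lambda)$ as a word in the generators $\rho_{ij}(a)$ and push every index exceeding $n$ down into the first $n$ hyperbolic pairs'' cannot be executed as stated: membership in $EU_{2m}(R,\Lambda)$ gives you no control whatsoever over the representing word (neither its length nor which indices occur in it), and the commutator identities of Lemma \ref{2.1} do not furnish a terminating rewriting procedure --- each application of such an identity can create new occurrences of precisely the high indices you are trying to eliminate. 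This is why no proof of injective stability in the literature (Bass for $\mathrm{GL}_n$, Vaserstein, Bak--Tang in the unitary/quadratic case) manipulates words in elementary generators. They work with the matrix itself: the stable range hypothesis is used to factor $g$, after multiplication by explicit elementary matrices, through subgroups of block-triangular shape, via decomposition lemmas roughly of the form $EU_{2m}(R,\Lambda)\subseteq EU_{2(m-1)}(R,\Lambda)\cdot L\cdot U$ with $L$ and $U$ unipotent of lower and upper block form, followed by a Whitehead-style absorption. None of that machinery appears in your sketch, so the kernel computation is unproven.

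A secondary, smaller point: even your surjectivity half quietly relies on a nontrivial transitivity statement --- that under $\Lambda\mathrm{sr}_{m-1}$ the group $EU_{2m}(R,\Lambda)$ carries a unimodular hyperbolic column pair to the standard one. In the unitary setting this is a genuine lemma (the row operations must respect the form, and the diagonal corrections must remain in $\Lambda$), not a formal consequence of unimodularity; it is part of what Bak and Tang actually prove. Given that the paper under review treats the whole statement as a citation, the honest assessment is that your proposal reconstructs the correct skeleton of the known proof but leaves both load-bearing steps --- transitivity for surjectivity, and the block-triangular decomposition for injectivity --- as named placeholders rather than arguments.
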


\section{Proof of Theorems}

In this section, we prove the results presented in Section \ref{section1}.

\subsection{Group actions on \textrm{CAT(0)} spaces}

In order to prove Theorem \ref{th3}, we need the following lemma. This is a
generalization of Proposition 2 in \cite{fu}, which is stated for Chevalley
groups over commutative rings. Recall that the permutation $\sigma $ is
defined in Section \ref{sec}.

\begin{lemma}
\label{fuu}

\begin{enumerate}
\item[(i)] Let $R$ be a general ring. Then for any integer $n\geq 3,$ $1\leq
i\neq j\leq n$, an element $r\in R$ and any elementary matrix $e_{ij}(r)\in
E_{n}(R),$ there exists a nilpotent subgroup $U\subset E_{n}(R)$ such that $%
e_{ij}(r)\in \lbrack U,U]$.

\item[(ii)] Let $(R,\Lambda )$ be a form ring over a general ring $R$. Then
for any integer $n\geq 3,$ $1\leq i\neq j\leq 2n$, an element $r\in R$ and
any elementary matrix $\rho _{ij}(r)\in EU_{2n}(R,\Lambda )$ (when $i=\sigma
j,$ we assume that $r\in \Lambda $ or $\Lambda ^{\ast }$) there exists a
nilpotent subgroup $U\subset EU_{n}(R)$ such that $\rho _{ij}(r)\in \lbrack
U,U]$.
\end{enumerate}
\end{lemma}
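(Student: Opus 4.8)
The plan is to realize each elementary (resp.\ elementary unitary) matrix as an honest commutator of two elements that sit inside a common nilpotent subgroup, using the multiplicativity and commutator identities of Lemma~\ref{ecom} (resp.\ Lemma~\ref{2.1}). Throughout, the nilpotent subgroups $U$ will be generated by root subgroups whose roots all lie in a single open half-space; such a configuration is automatically nilpotent, since iterated commutators strictly increase the ``height'' measured by any separating linear functional and therefore eventually vanish. The hypothesis $n\geq 3$ enters precisely because the constructions need three disjoint index pairs.

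For the ``easy'' cases I would argue as follows. In part~(i), since $n\geq 3$ there is an index $k\notin\{i,j\}$, and formula~(2) of Lemma~\ref{ecom} gives
\[
e_{ij}(r)=[e_{ik}(r),e_{kj}(1)].
\]
Setting $U=\langle e_{ik}(R),e_{kj}(R),e_{ij}(R)\rangle$, a direct check from Lemma~\ref{ecom} shows $e_{ij}(R)$ is central and the only nontrivial commutator is $[e_{ik}(a),e_{kj}(b)]=e_{ij}(ab)$, so $U$ is a Heisenberg-type group, nilpotent of class $\leq 2$, with $e_{ij}(r)\in[U,U]$. The short-root case of part~(ii), namely $j\neq\sigma i$, is identical in spirit: the pairs $\{i,\sigma i\}$ and $\{j,\sigma j\}$ are distinct, and as $n\geq 3$ a third pair $\{k,\sigma k\}$ is available, so the six indices $i,j,k,\sigma i,\sigma j,\sigma k$ are distinct and formula~(2) of Lemma~\ref{2.1} yields $\rho_{ij}(r)=[\rho_{ik}(r),\rho_{kj}(1)]$ inside the nilpotent group generated by $\rho_{ik},\rho_{kj},\rho_{ij}$.

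The substantive case is a long root $\rho_{i,\sigma i}(r)$, i.e.\ $j=\sigma i$, with $r\in\Lambda^{\ast}$ when $i\leq n$ and $r\in\Lambda$ when $i>n$; I treat $i\leq n$, the other case being analogous up to adjusting $r$ by the unit $\varepsilon$. Choose indices $l,k\leq n$ lying in two further pairs (possible since $n\geq 3$). Formula~(4) of Lemma~\ref{2.1} with $a=1$, $b=r$ reads
\[
[\rho_{il}(1),\rho_{l,\sigma l}(r)]=\rho_{i,\sigma l}(r)\,\rho_{i,\sigma i}(r),
\]
and is valid exactly when $r^{\ast}\in\Lambda$, i.e.\ $r\in\Lambda^{\ast}$, matching the admissible range of the target. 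Solving for the long root,
\[
\rho_{i,\sigma i}(r)=\rho_{i,\sigma l}(-r)\,[\rho_{il}(1),\rho_{l,\sigma l}(r)].
\]
The second factor is a commutator of elements of the ambient group $U$, hence lies in $[U,U]$; the first factor $\rho_{i,\sigma l}(-r)$ is a short root and, by the short-root construction with the auxiliary index $k$, equals $[\rho_{ik}(-r),\rho_{k,\sigma l}(1)]\in[U,U]$. Thus $\rho_{i,\sigma i}(r)\in[U,U]$, \emph{provided} $U$ is nilpotent.

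The main obstacle is exactly this last proviso: producing a single subgroup $U$ that simultaneously contains $\rho_{il},\rho_{l,\sigma l},\rho_{ik},\rho_{k,\sigma l},\rho_{i,\sigma l},\rho_{i,\sigma i}$ and is nilpotent. I would verify it by picking a functional $\phi$ on the weight lattice with $\phi(e_i)>\phi(e_l)>\phi(e_k)>0$; then each of the roots $e_i-e_l,\ 2e_l,\ e_i-e_k,\ e_k+e_l,\ e_i+e_l,\ 2e_i$ is $\phi$-positive, and every further root produced by the commutator identities of Lemma~\ref{2.1} remains in the open half-space $\{\phi>0\}$, so $U$ is nilpotent. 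I emphasize that using formula~(4) rather than formula~(3) is what makes $r$ range over the \emph{entire} $\Lambda^{\ast}$ (resp.\ $\Lambda$): formula~(3) only produces long-root arguments of the shape $ab-c\in R_{\varepsilon}$, which need not exhaust $\Lambda$, whereas in formula~(4) the full form-parameter element $r$ enters directly as the argument of the long-root generator $\rho_{l,\sigma l}(r)$.
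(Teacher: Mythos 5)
Your proposal is correct and takes essentially the same route as the paper's proof: the Heisenberg-type commutator trick $e_{ij}(r)=[e_{ik}(r),e_{kj}(1)]$ for part (i) and for short unitary roots, and for the long roots the identical decomposition $\rho_{i,\sigma i}(r)=\rho_{i,\sigma l}(-r)\,[\rho_{il}(1),\rho_{l,\sigma l}(r)]$ obtained from formula (4) of Lemma \ref{2.1}, with the short-root factor rewritten as a commutator via formula (2) (the paper does exactly this with $i=1$, $l=2$, $k=3$). The only cosmetic difference is your justification of nilpotency by a separating linear functional on the roots, where the paper observes that the generators are all upper unitriangular; these are the same argument.
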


\begin{proof}
These are easy consequences of commutator formulas. For example, we have $%
e_{12}(r)=[e_{13}(1),e_{32}(r)].$ We choose $U$ to be the subgroup generated
by all elementary matrices $e_{13}(x),e_{32}(y)$ with $x,y\in R.$ Since the commutator $[e_{13}(x), e_{32}(y)] = e_{12}(xy)$ is central in $U$, it is
clear that this is a nilpotent subgroup. Other cases are similar. For the
group $EU_{2n}(R,\Lambda )$ and $i\neq \sigma j,$ the statement for $\rho
_{ij}(r)$ is similar to that of $e_{12}(r)$ in $E_{n}(R).$ When $i=\sigma j,$
for example $\rho _{1,n+1}(r)$ with $r\in \Lambda ^{\ast },$ we have
identities
\begin{eqnarray*}
\rho _{1,n+1}(r) &=&\rho _{1,n+2}(-r)[\rho _{12}(1),\rho _{2,n+2}(r)] \\
&=&[\rho _{13}(1),\rho _{3,n+2}(-r)][\rho _{12}(1),\rho _{2,n+2}(r)]
\end{eqnarray*}%
by (4) and (2) of Lemma \ref{2.1}. Take $U$ to be the subgroup generated by
all unitary elementary matrices $\rho _{12}(x),\rho _{13}(y),\rho
_{3,n+2}(z),\rho _{2,n+2}(a)$ with $x,y,z\in R$ and $a\in \Lambda ^{\ast }.$
This is also a nilpotent group by the commutator formulas for unitary groups
in Lemma \ref{2.1}, since the commutators of these matrices are all upper triangular matrices.
\end{proof}

Our proof of Theorem \ref{th3} will be based on the following general
fixed-point theorem, which is Theorem 5.1 in Farb \cite{fa}.

\begin{lemma}
\label{general}Let $\Gamma $ be a finitely generated group, and let $%
C=\{\Gamma _{1},\Gamma _{2},\ldots ,\Gamma _{r+1}\}$ be a collection of
finitely generated nilpotent subgroups of $\Gamma .$ Suppose that:

\begin{enumerate}
\item[(1)] $C$ generates a finite index subgroup of $\Gamma .$

\item[(2)] Any proper subset of $C$ generates a nilpotent group.

\item[(3)] There exists $m>0$ so that for any element $g$ of any $\Gamma
_{i} $ there is a nilpotent subgroup $N<\Gamma $ with $g^{m}\in \lbrack
N,N]. $
\end{enumerate}

Then $\Gamma $ has the strong property $\mathrm{FA}_{r-1}.$
\end{lemma}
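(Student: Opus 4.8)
The plan is to derive the conclusion from a Helly-type theorem for convex subsets of $\mathrm{CAT(0)}$ spaces, after exploiting the nilpotency hypotheses to manufacture the relevant convex sets. So fix a semisimple action of $\Gamma$ on a complete $\mathrm{CAT(0)}$ space $X$ satisfying $\tilde H_{r-1}(Y;\mathbb{Z})=0$ for every open subset $Y\subseteq X$ (this is precisely the data defining strong $\mathrm{FA}_{r-1}$); the goal is to produce a global fixed point.

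The first step I would isolate as a subsidiary lemma: \emph{if $N$ acts semisimply on a complete $\mathrm{CAT(0)}$ space and $N$ is nilpotent, then every element of $[N,N]$ is elliptic.} I expect to prove this by induction on the nilpotency class, using that for commuting semisimple isometries the translation length is subadditive (via the common invariant flat on which both act by translation, a flat-torus type argument), together with $\tau(g^{k})=|k|\,\tau(g)$ and the conjugation-invariance of $\tau$. Granting this, condition (3) forces $\tau(g^{m})=0$, hence $\tau(g)=0$, for every $g$ in every $\Gamma_i$; since the action is semisimple, each such $g$ is elliptic. Thus each $\Gamma_i$, and more generally $\langle S\rangle$ for any proper subset $S\subsetneq C$ (which is nilpotent by (2)), consists entirely of elliptic elements.

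The second step records that a finitely generated nilpotent group all of whose elements are elliptic has a nonempty fixed-point set, which is automatically closed and convex. I would prove this by induction on nilpotency class: the center $Z(N)$ is finitely generated abelian with all elements elliptic, so by repeatedly intersecting the (convex, mutually invariant) fixed sets of its generators one gets $\mathrm{Fix}(Z(N))\neq\emptyset$; as $Z(N)\trianglelefteq N$ this set is complete, $\mathrm{CAT(0)}$ and $N$-invariant, and $N/Z(N)$ acts on it with all elements elliptic and smaller class, so the inductive hypothesis yields a point fixed by all of $N$. Applying this to each $\langle S\rangle$ shows that the closed convex sets $F_i=\mathrm{Fix}(\Gamma_i)$ have the property that any $r$ of them meet, since $\bigcap_{i\in S}F_i=\mathrm{Fix}(\langle S\rangle)\neq\emptyset$ for every $S\subsetneq C$ of size $r$.

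The third and decisive step is to feed the family $\{F_1,\dots,F_{r+1}\}$ into a Helly theorem for $\mathrm{CAT(0)}$ spaces: $r+1$ closed convex sets any $r$ of which intersect must have a common point provided $\tilde H_{r-1}$ vanishes on open subsets of $X$. This is exactly where the hypothesis defining strong $\mathrm{FA}_{r-1}$ is consumed, and I expect it to be the main obstacle. The argument should run through the nerve of the covering by the $F_i$ together with a Mayer--Vietoris / nerve-lemma computation: convexity makes all the partial intersections homologically trivial, so the only possible obstruction to a global intersection point lives in $\tilde H_{r-1}$ of an open neighborhood of $\bigcup_i F_i$, which the dimension hypothesis annihilates. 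The resulting common point of all the $F_i$ is a fixed point of $\langle C\rangle$. Finally, since $\langle C\rangle$ has finite index in $\Gamma$ by (1), this fixed point has finite, hence bounded, $\Gamma$-orbit, and the unique circumcenter of that orbit is a global fixed point of $\Gamma$, which completes the proof.
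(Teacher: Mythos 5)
First, a point of comparison: the paper does not actually prove this lemma --- it is quoted verbatim as Theorem 5.1 of Farb \cite{fa}, so the paper's ``proof'' is a citation. Your outline is essentially a reconstruction of Farb's own argument: vanishing translation lengths on the $\Gamma_i$ via nilpotency and condition (3), a fixed-point theorem for nilpotent groups of elliptic isometries, the \textrm{CAT(0)} Helly theorem (which is exactly where the homological $(r-1)$-dimensionality is consumed, with the correct numerology: $r$-wise intersections of $r+1$ convex sets), and the finite-index-plus-circumcenter step at the end. The architecture is the right one, and your steps 2 and 4 are correct as stated.

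There is, however, a genuine gap at the hinge between your steps 1 and 2. You assert that ``each $\Gamma_i$, and more generally $\langle S\rangle$ for any proper subset $S\subsetneq C$, consists entirely of elliptic elements.'' Condition (3) applies only to elements of the individual $\Gamma_i$; a general element of $\langle S\rangle$ is a product of elements from several $\Gamma_i$, and ellipticity is not preserved under products, so nothing you have established at that point makes such an element elliptic. This matters because your step 2 fixed-point theorem carries the hypothesis ``all elements elliptic,'' and it is precisely its application to $\langle S\rangle$ (not to a single $\Gamma_i$) that produces the $r$-wise intersections $\bigcap_{i\in S}\mathrm{Fix}(\Gamma_i)=\mathrm{Fix}(\langle S\rangle)\neq\emptyset$ fed into Helly. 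The repair stays within your toolkit: strengthen step 2 to ``if a finitely generated nilpotent group $M$ acts on a complete \textrm{CAT(0)} space so that some generating set consists of elliptic elements and every element of $[M,M]$ is elliptic, then $M$ has a global fixed point'' (run the same induction on the last nontrivial term $\gamma_c(M)$ of the lower central series, which lies in $Z(M)\cap[M,M]$), and then verify the hypothesis for $M=\langle S\rangle$ by applying your subsidiary lemma to $\langle S\rangle$ itself, which is nilpotent by (2) and acts semisimply because the ambient $\Gamma$-action does.

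Two further caveats. In the subsidiary lemma, the proposed induction on nilpotency class has a trap: the restriction of a semisimple action to an invariant closed convex subset need not be semisimple, so the naive inductive step breaks. Either run the centralizer argument only on central commutators $[a,b]$ (both $a$ and $b$ preserve $\mathrm{Min}([a,b])$ and act as translations on its $\mathbb{R}$-factor, so the translation-amount homomorphism kills $[a,b]$, forcing $\tau([a,b])=0$), or avoid the issue entirely via distortion: in a finitely generated nilpotent group every $g\in[N,N]$ satisfies $|g^n|=O(n^{1/2})$ in the word metric, hence $\tau(g)=0$ under any isometric action whatsoever. Finally, your step 3 --- Helly for closed convex sets in a space with $\tilde H_{r-1}(Y;\mathbb{Z})=0$ for all open $Y$ --- is not a routine nerve/Mayer--Vietoris computation; making it rigorous (non-compact closed convex sets, passing between closed sets and open neighborhoods) is the main theorem of \cite{fa} and the bulk of that paper, so as written your proof outsources its hardest ingredient.
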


\begin{proof}[Proof of Theorem \protect\ref{th3}]
We first prove that the elementary group $E_{n}(R)$ has the property $%
\mathrm{FA}_{n-2}$. If a group has strong property $\mathrm{FA}_{r},$ then
so do all its quotient groups (cf. (2) of Lemma \ref{lemfarb}). Therefore,
we may assume that the ring $R$ is the free noncommutative ring $\mathbb{%
Z\langle }x_{1},x_{2},\ldots ,x_{k}\mathbb{\rangle }$ generated by elements $%
x_{1},x_{2,}\ldots ,x_{k}.$ For $1\leq i\leq n-1,$ let $\Gamma _{i}$ be the
subgroup generated by all matrices $e_{i,i+1}(x)$ with $x\in R$. Denote by $%
\Gamma _{n}$ the subgroup generated by all matrices $e_{n1}(x)$ with $x\in R$%
. Then the set $C:=\{\Gamma _{1},\Gamma _{2},\ldots ,\Gamma _{n}\}$
generates the whole group $E_{n}(R),$ as follows. Denote by $\langle
C\rangle $ the subgroup generated by $C$ in $E_{n}(R).$ By the commutator
formulas in Lemma \ref{ecom}, when $r\in R$ and $1\leq i<j\leq n,$ we have
that
\begin{eqnarray*}
e_{ij}(r)
&=&[e_{i,i+1}(r),e_{i+1,j}(1)]=[e_{i,i+1}(r),[e_{i+1,i+2}(1),e_{i+2,j}(1)]]
\\
&=&\cdots =[e_{i,i+1}(r),[\cdots ,e_{j-1,j}(1)]\cdots ]\in \langle C\rangle
\end{eqnarray*}%
and
\begin{equation*}
e_{ji}(r)=[e_{jn}(r),e_{ni}(1)]=[e_{jn}(r),[e_{n1}(1),e_{1i}(1)]]\in \langle
C\rangle .
\end{equation*}%
This checks $(1)$ of Lemma \ref{general}. It is obvious that $(2)$ also
holds. By Lemma \ref{fuu}, the condition $(3)$ holds as well for $m=1$.
Therefore, Lemma \ref{general} implies that $E_{n}(R)$ has the strong
property $\mathrm{FA}_{n-2}.$

We prove the property $\mathrm{FA}_{n-1}$ of the elementary unitary group $%
EU_{2n}(R,\Lambda )$ as follows. The idea is the same as the proof for $%
E_{n}(R).$ For $1\leq i\leq n-1,$ let $\Gamma _{i}$ be the subgroup
generated by all $\rho _{i,i+1}(x)$ with $x\in R$. Denote by $\Gamma _{n}$
the subgroup generated by all $\rho _{n,2n-1}(r)\rho _{n,2n}(x)$ with $r\in
R $, $x\in \Lambda ^{\ast }$ and by $\Gamma _{n+1}$ the subgroup generated
by all $\rho _{n+1,2}(r)\rho _{n+1,1}(x)$ with $r\in R,x\in \Lambda $. Let $%
C^{\prime }$ be the set of subgroups $\{\Gamma _{1},\Gamma _{2},\ldots
,\Gamma _{n+1}\}.$ It is sufficient to check that all the conditions in
Lemma \ref{general} are satisfied. By Lemma \ref{fuu}, for any integer $%
1\leq i\leq n-1,$ the group $\Gamma _{i}$ satisfies the condition (3). Note
that for any $r\in R$, $x\in \Lambda ^{\ast },$ by Lemma \ref{2.1} we have
that
\begin{eqnarray*}
\rho _{n,2n-1}(r)\rho _{n,2n}(x) &=&\rho _{n,2n-1}(r-x)[\rho
_{n,n-1}(1),\rho _{n-1,2n-1}(x)] \\
&=&[\rho _{n1}(r-x),\rho _{1,2n-1}(1)][\rho _{n,n-1}(1),\rho _{n-1,2n-1}(x)].
\end{eqnarray*}%
Therefore, any element of the group $\Gamma _{n}$ lies in the commutator
subgroup of the nilpotent subgroup generated by all matrices $\rho
_{n1}(r_{1}),$ $\rho _{n,n-1}(r_{2}),\rho _{1,2n-1}(r_{3})$ and $\rho
_{n-1,2n-1}(x)$ with $r_{1},r_{2},r_{3}\in R$ and $x\in \Lambda ^{\ast }.$ A
similar argument shows that $\Gamma _{n+1}$ satisfies the condition (3) as
well. We now check the condition (1). Denote by $\langle C^{\prime }\rangle $
the subgroup generated by $C^{\prime }$ in $EU_{2n}(R,\Lambda ).$ According
to the commutator formulas in Lemma \ref{2.1}, for any $r\in R$ and $1\leq
i<j\leq n$ we have that $\rho _{ij}(r)\in \langle C^{\prime }\rangle $ and%
\begin{equation*}
\rho _{i,2n}(r)=[\rho _{i,n-1}(1),\rho _{n-1,2n}(r)]\in \langle C^{\prime
}\rangle .
\end{equation*}%
Note that $\rho _{i,2n}(r)=\rho _{n,n+i}(-\varepsilon ^{\ast }r^{\ast }).$
When $1\leq i<n<j\leq 2n$ with $i\neq n-j$ and $r\in R,$ we have that
\begin{equation*}
\rho _{ij}(r)=[\rho _{in}(1),\rho _{nj}(r)]\in \langle C^{\prime }\rangle .
\end{equation*}%
Since all the matrices $\rho _{i,\sigma i}(x)$ can be generated by $\rho
_{ij}(1)$ with $i\neq \sigma j$ and $\rho _{n,2n}(x)$ (cf. (4) in Lemma \ref%
{2.1}), we get that all the upper triangular elementary unitary matrices
belong to $\langle C^{\prime }\rangle $. For any $r\in R,1<i\leq 2n$ with $%
i\neq n+1,n+2,$ we have that
\begin{equation*}
\rho _{n+1,i}(r)=[\rho _{n+1,2}(r),\rho _{2,i}(1)]\in \langle C^{\prime
}\rangle .
\end{equation*}%
Note that for any $r\in R$ and $i\neq 1,n+1,$ the matrix $\rho
_{i,1}(r)=\rho _{n+1,\sigma i}(x)$ for some $x\in R.$ Therefore for any $%
r\in R$ and all $1<i,j\leq 2n$ with $i,j,\sigma i,\sigma j$ distinct and $%
i,j\neq n+1$, we have
\begin{equation*}
\rho _{ij}(r)=[\rho _{i1}(1),\rho _{1j}(r)].
\end{equation*}%
This proves that the subgroup generated by $C^{\prime }$ is $%
EU_{2n}(R,\Lambda )$ and the condition (1) in Lemma \ref{general} is
satisfied. It can be directly checked that condition (2) holds. Therefore,
the group $EU_{2n}(R,\Lambda )$ has property $\mathrm{FA}_{n-1}$ by Lemma %
\ref{general}.
\end{proof}

\bigskip

\begin{proof}[Proof of Theorem \protect\ref{th4}]
Recall the stabilization of $K_{1}$ from Lemma \ref{2.2}. When $n\geq
\mathrm{sr}(R)+1,$ the group $E_{n}(R)$ is normal in $\mathrm{GL}_{n}(R)$
and there is an isomorphism $\mathrm{GL}_{n}(R)/E_{n}(R)\rightarrow
K_{1}(R). $ When $n\geq \max \{3,\mathrm{sr}(R)+1\},$ the group $E_{n}(R)$
has property $\mathrm{FA}_{n-2}$ by Theorem \ref{th3}. By assumption, the
quotient group $\mathrm{GL}_{n}(R)/E_{n}(R)\cong K_{1}(R)$ has property $%
\mathrm{FA}_{n-2}.$ Therefore, the group $\mathrm{GL}_{n}(R)$ has property $%
\mathrm{FA}_{n-2}$ according to (3) of Lemma \ref{lemfarb}. The second part
for $U_{2n}(R,\Lambda )$ can be proved similarly using Lemma \ref{2.4} and
Theorem \ref{th3}.
\end{proof}

\bigskip

\begin{proof}[Proof of Corollary \protect\ref{cor5}]
When $G$ is finite and $R$ is a Dedekind domain, the stable range $\mathrm{sr%
}(R[G])\leq 2$ (41.23 of \cite{rep}, page 98). When $R=\mathbb{Z}$, the
abelian group $K_{1}(\mathbb{Z[}G])$ is finitely generated of rank equal to
the number of irreducible real representations of $G$ minus the number of
irreducible rational representations (Theorem 7.5 of \cite{Bass}, page 625).
By assumption, we have that the group $K_{1}(\mathbb{Z}[G])$ is finite. By
Lemma 2.1 in \cite{fa}, any finite group action on a \textrm{CAT(0)} space
has a global fixed point and thus has property $\mathrm{FA}_{n-2}.$ This
finishes the proof by Theorem \ref{th4}.
\end{proof}

\bigskip

In order to prove Proposition \ref{propcat}, we need the following lemma,
which was pointed out to the author by A.J. Berrick. This is a
generalization of Lemma 4.2 in \cite{ck} which is stated for $R=\mathbb{Z}$.

\begin{lemma}
\label{lem4.2}Let $n\geq 3$ and $R$ a general ring. Then any action of $%
E_{n}(R)$ on a finite set with less than $n$ points is trivial.
\end{lemma}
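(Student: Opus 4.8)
The plan is to regard an action of $E_n(R)$ on a finite set $X$ with $|X|=k\le n-1$ as a group homomorphism $\phi\colon E_n(R)\to\operatorname{Sym}(X)$ into the symmetric group on at most $n-1$ letters, and to prove it is trivial by showing $\phi(e_{ij}(r))=I_n$ for every generator. The first thing I would record is that $E_n(R)$ is perfect for $n\ge 3$: by part (2) of Lemma \ref{ecom}, every generator $e_{ik}(r)=[e_{ij}(r),e_{jk}(1)]$ is a commutator once a third index $j$ is available, so $E_n(R)=[E_n(R),E_n(R)]$ and hence $\phi(E_n(R))$ is a perfect subgroup of $\operatorname{Sym}(X)$. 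This already disposes of the cases $n=3$ and $n=4$, where $k\le 3$ and $\operatorname{Sym}(X)$ embeds in the solvable group $S_3$, whose only perfect subgroup is trivial.

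For the main range $n\ge 5$ the key idea is to exploit the copy of the alternating group sitting inside $E_n(R)$. The even permutation matrices have determinant $1$, hence lie in $E_n(\mathbb{Z})=\mathrm{SL}_n(\mathbb{Z})$, and their images under the canonical map $\mathbb{Z}\to R$ form a subgroup of $E_n(R)$ isomorphic to $A_n$. Since $A_n$ is simple for $n\ge 5$, any nontrivial homomorphism out of it is injective; but $A_n$ has no faithful permutation representation of degree $<n$, so its restriction under $\phi$ to $\operatorname{Sym}(X)$ with $|X|=k<n$ must be trivial. Thus every even permutation matrix $P_\pi$ lies in $\ker\phi$.

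I would then feed this back into the generators. Conjugation by a permutation matrix acts on elementary matrices by $P_\pi\, e_{ij}(r)\, P_\pi^{-1}=e_{\pi(i)\pi(j)}(r)$, with no sign, since $P_\pi$ is an honest $0/1$ matrix. Because $\phi(P_\pi)=I$ and $A_n$ is $2$-transitive on $\{1,\dots,n\}$, the value $\phi(e_{ij}(r))=:a_r$ is independent of the ordered pair $(i,j)$ of distinct indices. Applying $\phi$ to the relation $e_{13}(r)=[e_{12}(r),e_{23}(1)]$ coming from Lemma \ref{ecom}(2) then yields $a_r=[a_r,a_1]$ in $\operatorname{Sym}(X)$; taking $r=1$ gives $a_1=[a_1,a_1]=I$, and hence $a_r=[a_r,a_1]=[a_r,I]=I$ for every $r\in R$. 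Since the $e_{ij}(r)$ generate $E_n(R)$, this forces $\phi$ to be trivial.

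The part needing the most care is the low-dimensional bookkeeping rather than any hard estimate. The alternating-group leverage is unavailable when $n\in\{3,4\}$ precisely because $A_3$ and $A_4$ fail to be simple, so those cases must be separated off and handled by perfectness; and one must check that the $A_n$ inside $E_n(R)$ is genuine (for this $1\ne 0$ in $R$ suffices) and that $2$-transitivity really makes $a_r$ well defined. I expect no difficulty with uniformity over $R$, since the whole argument uses only the commutator identities of Lemma \ref{ecom} together with the integral permutation matrices, and so applies verbatim to an arbitrary associative ring with identity.
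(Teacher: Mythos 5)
Your proposal is correct, and its overall skeleton coincides with the paper's proof: both convert the action into a homomorphism $\phi\colon E_n(R)\to\mathrm{Sym}(k)$ with $k\leq n-1$, both dispose of $n=3,4$ by observing that $E_n(R)$ is perfect while $\mathrm{Sym}(k)\subseteq S_3$ is soluble, and both use simplicity of $A_n$ for $n\geq 5$ to force the alternating subgroup of permutation matrices into $\ker\phi$. The one genuine difference is the final step. The paper quotes an external result (Berrick, \emph{An approach to algebraic K-Theory}, 9.4) that $A_n$ normally generates $E_n(R)$, so that $A_n\subseteq\ker\phi$ immediately gives $\ker\phi=E_n(R)$. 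You instead prove the needed consequence by hand: since $\phi$ kills all even permutation matrices and $P_\pi e_{ij}(r)P_\pi^{-1}=e_{\pi(i)\pi(j)}(r)$, two-transitivity of $A_n$ makes $a_r:=\phi(e_{ij}(r))$ independent of $(i,j)$, and then Lemma \ref{ecom}(2) gives $a_r=[a_r,a_1]$, whence $a_1=[a_1,a_1]=1$ and $a_r=1$. This costs you a little bookkeeping (checking that the copy of $A_n$ over $R$ is genuine when $1\neq 0$, and the no-sign conjugation formula, both of which you handle correctly), but it buys a self-contained proof that uses only the commutator identities and integral permutation matrices --- which is in the spirit of the paper's closing remark that its arguments should transfer to Steinberg groups because they rely only on commutator formulas. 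Your argument is in effect an inline proof of the normal-generation fact that the paper cites.
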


\begin{proof}
Let \textrm{Sym}$(k)$\textrm{\ }be the permutation group of $k$ elements.
Any group action of $E_{n}(R)$ on a finite set of $k$ elements corresponds a
group homomorphism%
\begin{equation*}
\varphi :E_{n}(R)\rightarrow \mathrm{Sym}(k)\mathrm{.}
\end{equation*}%
When $k\leq n-1$ and $n\geq 5,$ the alternating group $A_{n}$ is simple and
there is no nontrivial map from $A_{n}$ to $\mathrm{Sym}(k)$ by considering
the cardinalities. Since $A_{n}$ normally generates $E_{n}(R)$ (cf. Berrick
\cite{ber}, 9.4), any map $\varphi $ is trivial. For $n=3$ and $n=4,$ the
triviality of $\varphi $ follows from the fact that $E_{n}(R)$ is perfect
and $\mathrm{Sym}(k)$ is soluble.
\end{proof}

\bigskip

\begin{proof}[Proof of Proposition \protect\ref{propcat}]
Let $X$ be a uniformly finite \textrm{CAT(0)} cell complex. Assume that the
degree of each vertex is less $N$ for some positive integer $N.$ For an
integer $n\geq \max \{\dim (X)-2,3\},$ let $G$ be a copy of $E_{n}(R)$
sitting inside of $E(R)$ (or inside of $EU(R,\Lambda )$ by the hyperbolic
embedding defined by $A\longmapsto \mathrm{diag}(A,A^{\ast -1})$). We may
assume that $n>N.$ By Theorem \ref{th3}, there is a fixed point $x_{0}\in X$
under the $G$-action. Denote by $\mathrm{Fix}(G)$ the set of fixed points of
$G$-action in $X.$ Then $G$ acts on the link of $x_{0},$ which is a finite
set with less than $N$ elements. By Lemma \ref{lem4.2}, the group $G$ action
is trivial, which shows that any neighbor of $x_{0}$ is also in $\mathrm{Fix}%
(G).$ Therefore, the group $G$ acts trivially on all vertices of $X.$
According to the commutator formulas in Lemma \ref{ecom} and Lemma \ref{2.4}%
, the group $E(R)$ and $EU(R,\Lambda )$ are normally generated by $G$ and
hence act trivially on the whole space $X.$
\end{proof}

\bigskip

\begin{proof}[Proof of Corollary \protect\ref{correp}]
It is proved by Farb in Theorem 1.7 and Theorem 1.8 of \cite{fa} that any
group $\Gamma $ with property $\mathrm{FA}_{n-1}$ is of integral $n$%
-representation type. Then the corollary is a direct consequence of Theorem %
\ref{th3}.
\end{proof}

\subsection{Group actions on spheres and acyclic manifolds}

Recall that a group $G$ action on a space $X$ is effective if the subgroup
that fixes all elements of $X$ is trivial. In order to prove Theorem \ref%
{th1}, we need two lemmas from Bridson and Vogtmann \cite{BV}.

\begin{lemma}[ \protect\cite{BV}, Theorem 4.7]
\label{lemm1}Let $m$ and $n$ be two integers with $m<n-1.$ Then the group $%
\mathbb{Z}_{2}^{n}$, $n$ copies of groups of two elements, cannot act
effectively by homeomorphisms on a generalized $m$-sphere over $\mathbb{Z}%
_{2}$ or a $\mathbb{Z}_{2}$-acyclic $(m+1)$-hm$_{\mathbb{Z}_{2}}.$

If $m<2n-1$ and $p$ is an odd prime, then $\mathbb{Z}_{p}^{n}$ cannot act
effectively by homeomorphisms on a generalized $m$-sphere over $\mathbb{Z}%
_{p}$ or a $\mathbb{Z}_{p}$-acyclic $(m+1)$-hm$_{\mathbb{Z}_{p}}.$
\end{lemma}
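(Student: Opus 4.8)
The plan is to deduce both assertions from a single inductive inequality established via the global Smith theorem (Lemma \ref{locsmith}). Write $c_p=1$ for $p=2$ and $c_p=2$ for odd $p$; the value $c_p=2$ in the odd case will come precisely from the parity clause ``$r-m$ is even'' in Lemma \ref{locsmith}(i). I would first reformulate the sphere statement as the following refined claim, proved by induction on the rank $n$: \emph{if $G\cong\mathbb{Z}_p^{n}$ acts effectively by homeomorphisms on a generalized $m$-sphere $X$ over $\mathbb{Z}_p$ and $X^{G}$ is a generalized $r$-sphere (with $r=-1$ when $X^{G}=\emptyset$), then $m-r\ge c_p\,n$.} Since $r\ge-1$ this yields $m\ge c_p n-1$, whose contrapositive is exactly the sphere half of the lemma: for $p=2$ and $m<n-1$, or for odd $p$ and $m<2n-1$, no effective action exists.

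For the inductive step I would pick a nontrivial $g\in G$ and set $H=\langle g\rangle\cong\mathbb{Z}_p$ and $F=X^{H}$. Since $G$ is abelian, $F$ is $G$-invariant, and by Lemma \ref{locsmith}(i) $F$ is a generalized $s$-sphere; effectiveness forces $g$ to act nontrivially, so $F\neq X$ and $s<m$, with $m-s\ge c_p$ (the extra drop for odd $p$ being supplied by the parity clause). The organizing device is the subgroup $K\le G$ that fixes $F$ pointwise: then $X^{K}=F$, the quotient $G/K$ acts effectively on $F$ with fixed set $X^{G}$ (because $X^{G}\subseteq F$), and the rank splits as $n=k+(n-k)$ with $k=\operatorname{rank}(K)$. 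Applying the inductive hypothesis to $K\cong\mathbb{Z}_p^{k}$ acting on $X$ (fixed set $F$) and to $G/K\cong\mathbb{Z}_p^{n-k}$ acting on $F$ (fixed set $X^{G}$) would give $m-s\ge c_p k$ and $s-r\ge c_p(n-k)$, and adding these yields $m-r\ge c_p n$.

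The main obstacle is keeping the rank strictly decreasing at each stage, i.e. ensuring $1\le k\le n-1$ so that both inductive calls have smaller rank. Here $k\ge1$ is automatic since $H\le K$, and $k\le n-1$ holds provided $X^{H}\neq X^{G}$ for some choice of $H$. The genuinely degenerate case is when \emph{every} nontrivial element has the same fixed set $F_{0}=X^{G}$; then $G$ acts freely on the complement $X\setminus F_{0}$, which has the mod-$p$ homology of a sphere. I would close this case with the classical fact that an elementary abelian $p$-group of rank $\ge2$ cannot act freely on a mod-$p$ homology sphere (its cohomology is not periodic), forcing $n\le1$, where the claim reduces to the single-generator estimate $m-r\ge c_p$. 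I expect verifying this free-action reduction — that the complement is indeed a mod-$p$ homology sphere on which the action is free — to be the most delicate point.

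Finally, the acyclic statement would be handled by the same scheme. Running the identical induction with Lemma \ref{locsmith}(ii) in place of (i) — so that the fixed set of each $\mathbb{Z}_p$ is a $\mathbb{Z}_p$-acyclic homology manifold whose homological dimension drops by at least $c_p$ — yields, for an effective action on a $\mathbb{Z}_p$-acyclic $(m+1)$-hm$_{\mathbb{Z}_p}$, the bound $m+1\ge c_p n$, matching the stated ranges. Alternatively, since Lemma \ref{locsmith}(ii) guarantees $X^{G}\neq\emptyset$, I would pick a global fixed point and pass to its local homology sphere (a generalized $m$-sphere over $\mathbb{Z}_p$), on which the induced action is again effective, thereby reducing the acyclic case in dimension $m+1$ to the already-treated sphere case in dimension $m$.
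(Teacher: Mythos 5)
The central difficulty is that your induction is forced to run \emph{on the fixed sets}, and the tools quoted in the paper do not put the fixed sets back into the class of spaces your refined claim is about. In the inductive step you write ``by Lemma \ref{locsmith}(i) $F$ is a generalized $s$-sphere,'' but Lemma \ref{locsmith}(i) gives only $H_*^c(F;\mathbb{Z}_p)\cong H_*^c(S^s;\mathbb{Z}_p)$; it does \emph{not} say that $F=X^H$ is an $s$-hm$_{\mathbb{Z}_p}$. Your call of the inductive hypothesis for $G/K$ acting on $F$ needs exactly that, and the point is not cosmetic: the refined claim is \emph{false} for spaces that merely have the mod-$p$ homology of a sphere. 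For example, take $Y$ compact with an effective $\mathbb{Z}_p^n$-action (a faithful linear action on a sphere), form the cone $C(Y)$ with the conical action, and wedge an $S^m$ with trivial action onto the cone point: the result is a compact, finite-dimensional space with the mod-$p$ homology of $S^m$ carrying an effective $\mathbb{Z}_p^n$-action, for every $n$ and $m$, whose fixed set contains $S^m$ --- no dimension drop at all. Effectiveness is converted into a codimension bound only through the homology-manifold structure, so your argument secretly requires the theorem (Borel, \emph{Seminar on Transformation Groups}, Ch.~V) that fixed-point sets of $\mathbb{Z}_p$-actions on homology manifolds over $\mathbb{Z}_p$ are again homology manifolds, of even codimension when $p$ is odd. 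That is a substantive missing ingredient, not a rereading of Lemma \ref{locsmith}. The omission is worse in your acyclic argument: Lemma \ref{locsmith}(ii) says only ``acyclic goes to acyclic'' and carries no dimension information, so the assertion that ``the homological dimension drops by at least $c_p$'' has no source; and the fallback reduction to ``the local homology sphere at a fixed point'' is not available for topological actions, since a homology manifold has no invariant link on which the induced action would live.

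By contrast, the part you flagged as most delicate is in fact repairable, though not with the fact as you cite it: in the degenerate case $X\setminus F_0$ is an open $m$-hm$_{\mathbb{Z}_p}$ whose compactly supported homology is that of $S^{m-r-1}$ (Alexander--Lefschetz duality in the compact homology manifold $X$), so it is a mod-$p$ homology sphere but \emph{not} a generalized sphere in the paper's sense; one then needs the Smith--Swan theorem that $\mathbb{Z}_p\times\mathbb{Z}_p$ has no free action on a finite-dimensional space with the mod-$p$ homology of a sphere, which is true and classical. For comparison: the paper does not prove this lemma at all but quotes it from Bridson--Vogtmann, where (following Borel) it is deduced from Borel's formula $m-r=\sum_H(m_H-r)$, summed over the index-$p$ subgroups $H$. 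That route is designed precisely to avoid your gap: the induction on rank is always applied to subgroups acting on the \emph{original} generalized sphere $X$, the fixed sets enter only through their homological dimensions, and the manifold structure is used once, in the rank-one base case; it also kills your degenerate case instantly (all $m_H=r$ forces $m=r$, hence a trivial action). If you add Borel's fixed-set theorem as an explicit ingredient, your decomposition $n=k+(n-k)$ via the pointwise stabilizer $K$ does give a correct and genuinely different proof of the sphere case, and the acyclic case then follows more safely by one-point compactification (with $\infty$ a global fixed point forcing $r\geq 0$) than by local links; without that ingredient, the argument does not close.
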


\begin{lemma}[\protect\cite{BV}, Lemma 4.12]
\label{lemm2}Let $X$ be a generalized $m$-sphere over $\mathbb{Z}_{2}$ or a $%
\mathbb{Z}_{2}$-acyclic $(m+1)$-hm$_{\mathbb{Z}_{2}}$ and $G$ be a group
acting by homeomorphisms on $X$. Suppose $G$ contains a subgroup $P=\mathbb{Z%
}_{2}\times \mathbb{Z}_{2}$ all of whose nontrivial elements are conjugate
in $G$. If $P$ acts nontrivially, then the fixed-point sets of its
nontrivial elements have codimension $m\geq 2.$
\end{lemma}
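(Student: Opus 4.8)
Write the three nontrivial elements of $P$ as the commuting involutions $\alpha$, $\beta$ and $\gamma=\alpha\beta$, so that each of them preserves the fixed-point set of the others. The plan is to combine Smith theory (Lemma \ref{locsmith}) with the conjugacy hypothesis to equate the dimensions of the three fixed-point sets, and then to extract a parity constraint forcing their common codimension to be even; nontriviality of $P$ will finally rule out codimension $0$.

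First I would apply Lemma \ref{locsmith}(i) with $p=2$ to each involution acting on the generalized $m$-sphere $X$: it shows that $\mathrm{Fix}(\alpha)$, $\mathrm{Fix}(\beta)$ and $\mathrm{Fix}(\gamma)$ are generalized spheres over $\mathbb{Z}_2$ of some dimensions $d_\alpha,d_\beta,d_\gamma\le m$. Next I would invoke the conjugacy hypothesis: if $g\in G$ satisfies $g\alpha g^{-1}=\beta$, then $\mathrm{Fix}(\beta)=g\cdot\mathrm{Fix}(\alpha)$, so the three fixed-point sets are homeomorphic and in particular
\[
d_\alpha=d_\beta=d_\gamma=:s .
\]

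The crux is the parity of $m-s$. For a single involution Lemma \ref{locsmith}(i) imposes no constraint on $m-s$ when $p=2$ (indeed a reflection of $S^m$ has a codimension-one fixed set), so evenness is a genuinely $\mathbb{Z}_2\times\mathbb{Z}_2$ phenomenon. Here I would apply Borel's formula for the $P$-action (cf.\ Bredon \cite{br}), which relates $m$, the dimension $t$ of $\mathrm{Fix}(P)$, and the dimensions of the fixed sets of the three index-two subgroups $\langle\alpha\rangle,\langle\beta\rangle,\langle\gamma\rangle$:
\[
m-t=(d_\alpha-t)+(d_\beta-t)+(d_\gamma-t).
\]
Substituting $d_\alpha=d_\beta=d_\gamma=s$ gives $m-t=3(s-t)$, hence $m-s=2(s-t)$ is even. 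For the $\mathbb{Z}_2$-acyclic $(m+1)$-hm$_{\mathbb{Z}_2}$ case I would run the identical argument using Lemma \ref{locsmith}(ii) together with the acyclic analogue of Borel's formula, the local codimension being read off from the same relation.

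Finally I would use that $P$ acts nontrivially. Since the three involutions are conjugate in $G$, if one of them acted as the identity then so would all three, contradicting nontriviality; hence none of them is the identity, and each of $\mathrm{Fix}(\alpha)$, $\mathrm{Fix}(\beta)$, $\mathrm{Fix}(\gamma)$ is a proper subset of $X$. By Smith theory a generalized subsphere of full dimension $m$ must be all of $X$, so $s<m$; together with the evenness of $m-s$ this yields codimension $m-s\ge 2$ for each of the three fixed-point sets, as claimed. I expect the parity step to be the main obstacle: it is precisely where the rank-two structure of $P$ and the conjugacy hypothesis must be fed into a Borel-type relation, and where the passage from the sphere case to the acyclic homology-manifold case requires the most care.
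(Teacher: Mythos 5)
Your proposal is correct, and it is essentially the argument of the source: the paper does not prove this lemma itself but quotes it from Bridson--Vogtmann (\cite{BV}, Lemma 4.12), whose proof is exactly your route --- conjugacy makes the three fixed-point sets homeomorphic and hence equidimensional, Borel's formula for the $\mathbb{Z}_2\times\mathbb{Z}_2$-action forces the common codimension to be even, and nontriviality (via Smith theory together with invariance of domain for homology manifolds over $\mathbb{Z}_2$) rules out codimension $0$, giving codimension at least $2$.
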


\begin{proof}[Proof of Theorem \protect\ref{th1}]
We only give the proof of group actions on generalized $d$-spheres, while
that of group actions on acyclic homology manifolds is similar. Suppose that
$E_{n}(R)$ acts by homeomorphisms on some generalized $d$-sphere $X$. This
means that there is a group homomorphism $f:E_{n}(R)\rightarrow \mathrm{Homeo%
}(X).$ We prove (a)(i) in two cases.

\begin{enumerate}
\item[(1)] \noindent The characteristic of $R$ is $2.$
\end{enumerate}

When $n=3,$ the elements $e_{12}(1),e_{13}(1)$ generate a subgroup which is
isomorphic to $G:=\mathbb{Z}_{2}^{2}$ in $E_{n}(R).$ Note that $e_{12}(1)$
and $e_{13}(1)$ are conjugate by a permutation matrix and that
\begin{equation*}
e_{12}(1)e_{13}(1)=e_{23}(1)e_{12}(1)e_{23}(1).
\end{equation*}%
We conclude from Lemma \ref{lemm2} that if the action of $G$ is not trivial
then the fixed-point set of any nontrivial element is at least of
codimension $2$. Since $d\leq 1,$ this shows that the action of $G$ is free.
However, a classical result of Smith says that $\mathbb{Z}_{p}\times \mathbb{%
Z}_{p}$ cannot act freely on any generalized sphere over $\mathbb{Z}_{p}$
for any prime number $p$ (cf. \cite{s}). This implies that the action of $G$
is trivial. By the commutator formulas in Lemma \ref{ecom}, the group $%
E_{n}(R)$ is normally generated by $G$. This shows that the action of $%
E_{n}(R)$ is trivial. When $n\geq 4,$ the matrices $e_{ij}(1)$ $(1\leq i\leq
n/2,$ $n/2<j\leq n)$ generate an abelian group $\mathbb{Z}_{2}^{k},$ where
in general $k\geq n.$ By Lemma \ref{lemm1}, the action of $\mathbb{Z}%
_{2}^{k} $ is not effective on the generalized $d$-sphere $X$ over $\mathbb{Z%
}_{2}$. Choose a nontrivial element $M\in \mathbb{Z}_{2}^{k}$ acting
trivially on $X$. Without loss of generality, we may assume that $%
M=e_{1n}(1).$ By the commutator formulas in Lemma \ref{ecom} again, the
group $E_{n}(R)$ is normally generated by such $M$. This shows that the
action of $E_{n}(R)$ is trivial. The same argument with $x_{ij}(1)$ instead
of $e_{ij}(1)$ show that any action of $\mathrm{St}_{n}(R)$ on $X$ is also
trivial.

\begin{enumerate}
\item[(2)] \noindent The characteristic of $R$ is not $2.$
\end{enumerate}

Let $A_{12},A_{23},\ldots ,A_{n-1,n}$ be the elements in $E_{n}(R)$ defined
in Lemma \ref{prop}. By Lemma \ref{prop}, they generate a subgroup which is
isomorphic to $\mathbb{Z}_{2}^{n-1}.$ Suppose that we can find a noncentral
element $A$ of $E_{n}(R)$ in $\mathbb{Z}_{2}^{n-1}$ such that the action of $%
A$ is trivial. According to Theorem \ref{normal}, the normal subgroup
generated by $A$ contains the subgroup $E_{n}(R,2R).$ Note that the action
of any element in $E_{n}(R,2R)$ is trivial. When $2$ is invertible in $R,$
we have that $E_{n}(R,2R)=E_{n}(R).$ This implies that any element in $%
E_{n}(R)$ acts trivially on $X$. When $2$ is not invertible, the action of $%
E_{n}(R)$ factors through that of $E_{n}(R)/E_{n}(R,2R).$ Note that there is
a commutative diagram%
\begin{equation*}
\begin{array}{ccccc}
1\rightarrow \mathrm{St}_{n}(R,2R) & \rightarrow & \mathrm{St}_{n}(R) &
\rightarrow & \mathrm{St}_{n}(R/2R)\rightarrow 1 \\
\downarrow &  & \downarrow &  & \downarrow \\
1\rightarrow E_{n}(R,2R) & \rightarrow & E_{n}(R) & \rightarrow &
E_{n}(R)/E_{n}(R,2R)\rightarrow 1,%
\end{array}%
\end{equation*}%
where the two horizontal sequences are exact (for the exactness of the first
one, see Lemma 13.18 and its proof in Magurn \cite{mag}). Then the action of
$E_{n}(R)/E_{n}(R,2R)$ on $X$ can be lifted as an action of $\mathrm{St}%
_{n}(R/2R).$ Since the quotient ring $R/2R$ is of characteristic $2,$ this
case is already proved in case $(1).$ Therefore, it is enough to find such
element $A$ in $\mathbb{Z}_{2}^{n-1}$ such that the action of $A$ is
trivial. It is not hard to see that for each integer $1\leq i\leq n-2,$ the
elements $A_{i,i+1},$ $A_{i+1,i+2}$ and $A_{i,i+1}A_{i+1,i+2}$ are conjugate
by some permutation matrices. We will finish the proof by induction on $n$
(cf. the proof of Theorem 1.1 in Bridson and Vogtmann \cite{BV}).

When $n=3,$ using a similar argument as that of case (1), we see that the
group generated by $A_{12}$ and $A_{23}$ cannot act effectively on the
generalized $d$-sphere $X.$ Therefore such element $A$ exists.

When $n=4,$ if the action of $A_{12}$ is trivial, we are done. Otherwise,
Lemma \ref{lemm2} and Lemma \ref{locsmith} show that the fixed-point set $%
\mathrm{Fix}(A_{12})$ of $A_{12}$ is a generalized sphere over $\mathbb{Z}%
_{2}$ of dimension $0$ (note: the fixed-point set is not empty). Then the
abelian group $\mathbb{Z}_{2}^{2}$ generated by $A_{23}$ and $A_{34}$ acts
on $\mathrm{Fix}(A_{12}).$ By Lemma \ref{lemm1}, there exists a nontrivial
element $\gamma $ with trivial action on $\mathrm{Fix}(A_{12}).$ Since $%
\gamma $ and $A_{12}$ are conjugate, we have that $\mathrm{Fix}(A_{12})=%
\mathrm{Fix}(\gamma ).$ By Theorem 4.8 in \cite{BV}, $A_{12}$ and $\gamma $
have the same image in $\mathrm{Homeo}(X).$ If $\gamma \neq A_{34},$ $%
A_{12}\gamma ^{-1}$ is noncentral in $E_{n}(R)$ and we can take $%
A=A_{12}\gamma ^{-1}.$ If $\gamma =A_{34},$ the group homomorphism $f$
factors through%
\begin{equation*}
\bar{f}:E_{n}(R)/\langle \pm I_{n}\rangle \rightarrow \mathrm{Homeo}(X).
\end{equation*}%
In $E_{n}(R)/\langle \pm I_{n}\rangle ,$ the images of $A_{12},A_{23},$ $%
e_{12}(1)e_{21}(-1)e_{12}(1)e_{34}(1)e_{43}(-1)e_{34}(1)$ and $%
e_{13}(1)e_{31}(-1)e_{13}(1)e_{24}(1)e_{42}(-1)e_{24}(1)$ generate an
abelian group $\mathbb{Z}_{2}^{4}.$ By Lemma \ref{lemm1}, there exists a
nontrivial element having trivial action on $X.$ The preimage of such an
element normally generates $E_{n}(R,2R)$. By case (1), we are done.

We now consider the general case when $n\geq 5.$ If the action of $A_{n-1,n}$
is trivial, we are done. Otherwise, Lemma \ref{lemm2} and Lemma \ref%
{locsmith} show that the fixed-point set $\mathrm{Fix}(A_{n-1,n})$ is a
generalized sphere over $\mathbb{Z}_{2}$ of codimension at least $2.$ The
elements in the subgroup $E_{n-2}(R)$ in the upper left corner of $E_{n}(R)$
are centralizers of $A_{n-1,n}.$ By induction assumption, the action of $%
E_{n-2}(R)$ on $\mathrm{Fix}(A_{n-1,n})$ is trivial. This shows that $%
\mathrm{Fix}(A_{n-1,n})\subset \mathrm{Fix}(A_{12}).$ Similarly, the
converse holds. This implies that $f(A_{12})=f(A_{n-1,n})$ (cf. Theorem 4.8
in \cite{BV}). Take $A=A_{12}^{-1}A_{n-1,n}.$ This finishes the proof of (a).

We prove (b)(i) as follows. Since $E_{n-1}(R)$ normally generates $E_{n}(R)$
when $n>2,$ it is enough to prove (ii) when $n=2k$ for some $k\geq 2$.
Construct an abelian subgroup $\mathbb{Z}_{3}^{k}$ in $E_{n}(R)$, as
follows. For each integer $i$ $(i=1,2,\ldots ,k),$ denote by $B_{i}$ the
matrix
\begin{equation*}
e_{2i-1,2i}(1)e_{2i,2i-1}(-1)e_{2i-1,2i}(1)e_{2i,2i-1}(-1)\in E_{n}(R).
\end{equation*}%
For example, $B_{1}$ looks like the matrix
\begin{equation*}
\begin{pmatrix}
-1 & 1 &  \\
-1 & 0 &  \\
&  & I_{n-2}%
\end{pmatrix}%
.
\end{equation*}%
It is obvious that each matrix $B_{i}$ has order $3$ and together they
generate an abelian subgroup $\mathbb{Z}_{3}^{k}$ in $E_{n}(R).$ By Lemma %
\ref{lemm1}, for an integer $d\leq 2k-2$ the group $\mathbb{Z}_{3}^{k}$
cannot act effectively by homeomorphisms on a generalized $d$-sphere over $%
\mathbb{Z}_{3}.$ Without loss of generality, we may assume that the action
of $B_{1}$ is trivial. Note that
\begin{equation*}
\lbrack e_{32}(1),B_{1}]=e_{31}(-1)e_{32}(2)
\end{equation*}%
and
\begin{equation*}
\lbrack e_{31}(-1)e_{32}(2),e_{12}(-1)]=e_{32}(1).
\end{equation*}%
The matrix $e_{32}(1)$ normally generates the whole group $E_{n}(R).$ This
shows that the group action of $E_{n}(R)$ is trivial.

Now we prove (c). Suppose that the group $EU_{2n}(R,\Lambda )$ acts by
homeomorphisms on a generalized $d$-homology sphere over $\mathbb{Z}_{2}$ or
$\mathbb{Z}_{3}$. There is a group homomorphism $E_{n}(R)\rightarrow
EU_{2n}(R,\Lambda )$ defined by the hyperbolic embedding
\begin{equation*}
A\longmapsto \mathrm{diag}(A,A^{\ast -1})
\end{equation*}%
for any element $A\in E_{n}(R).$ By the commutator formulas in Lemma \ref%
{2.1}, we see that $EU_{2n}(R,\Lambda )$ is normally generated by the image
of $E_{n}(R).$ Since the action of $E_{n}(R)$ is trivial, the action of $%
EU_{2n}(R,\Lambda )$ is trivial as well.
\end{proof}

\begin{remark}
\emph{If the generalized spheres in Theorem \ref{th1} are smooth manifolds
and the actions are smooth, the proof is much easier by noting the fact that
}$\mathbb{Z}^{k}$\emph{\ cannot act effectively by orientation-preserving
diffeomorphisms on a }$d$\emph{-sphere for }$d\leq k-1$\emph{\ (cf. the
proof of Theorem 2.1 in \cite{BV}). When we know that Theorem \ref{th1} is
true for }$R=\mathbb{Z}$\emph{, the general-ring case can also be proved by
using the normal generation of }$E_{n}(R)$\emph{\ by the image of }$E_{n}(%
\mathbb{Z}).$\emph{\ Our intent here is to avoid the Margulis finiteness
theorem. Moreover, the proof given here works for Steinberg groups as well.}
\end{remark}

\begin{proof}[Proof of Theorem \protect\ref{th2}]
The strategy of the proof is similar to that of Theorem \ref{th1}. We
construct an abelian subgroup $\mathbb{Z}_{3}^{n}$ of $EU_{2n}(R,\Lambda )$
as follows. For $i=1,2,\ldots ,n,$ let
\begin{equation*}
C_{i}=\rho _{i,n+i}(1)\rho _{n+i,i}(-1)\rho _{i,n+i}(1)\rho _{n+i,i}(-1)\in
EU_{2n}(R).
\end{equation*}%
It is obvious that the order of $C_{i}$ is $3$ and the subgroup generated by
$C_{i}$ $(i=1,2,\ldots ,n)$ is $\mathbb{Z}_{3}^{n}.$ The remainder of the
proof of (i) is the same as that of (b)(i) in Theorem \ref{th1}.
\end{proof}

\bigskip

\begin{proof}[Proof of Corollary \protect\ref{repcor}]
Let $E_{n-1}(A)$ act on the space $\mathbb{R}^{n-1}$ by matrix
multiplications. According to Theorem \ref{th1} a(ii)\emph{, }the image of $%
E_{n}(R)$ in $E_{n-1}(A)$ acts trivially on $\mathbb{R}^{n-1}$. This implies
that the image in (i) is the identity matrix. The second part can be proved
similarly by using Theorem \ref{th2} and considering the group $\mathrm{Sp}%
_{2(n-1)}(A)$ action on the space $\mathbb{R}^{2(n-1)}$.
\end{proof}

\bigskip

\begin{proof}[Proof of Theorem \protect\ref{thcor}]
For the group $E(R),$ the proof is similar to that of Lemma 1 in \cite{sw}.
The idea is as follows. For sufficiently large $k,$ the abelian group $%
\mathbb{Z}_{2}^{k}$ cannot act effectively on the manifolds in Theorem \ref%
{thcor}. When the characteristic of $R$ is $2,$ we take such $\mathbb{Z}%
_{2}^{k}$ as the subgroup in $E(R)$ generated by $e_{1j}(1)$ for $2\leq
j\leq k+1.$ By commutator formulas (cf. Lemma \ref{ecom}), any nontrivial
element in $\mathbb{Z}_{2}^{k}$ normally generates $E(R).$ This shows that
the action of $E(R)$ is trivial. When the characteristic of $R$ is not $2,$
we take such $\mathbb{Z}_{2}^{k}$ as the subgroup generated by $A_{i,i+1}$
defined in Lemma \ref{prop} for $1\leq i\leq k.$ Any nontrivial element in
such $\mathbb{Z}_{2}^{k}$ is noncentral in $E(R).$ By Lemma \ref{prop}, any
noncentral element in such $\mathbb{Z}_{2}^{k}$ generates a normal subgroup
containing $E(R,2R).$ Therefore the action of $E(R)$ factors through that of
$E(R/2R),$ which is already proved since the characteristic of $R/2R$ is $2$.

For the group $EU(R,\Lambda ),$ note that there is a hyperbolic embedding $%
E(R)\rightarrow EU(R,\Lambda )$ defined by $A$ $\longmapsto \mathrm{diag}%
(A,A^{\ast -1}).$ The action of $EU(R,\Lambda )$ is trivial since $E(R)$
normally generates $EU(R,\Lambda ).$
\end{proof}

\bigskip

\noindent \textbf{Acknowledgements}

The author wants to thank Professor Wolfgang L\"{u}ck for supporting from
his Leibniz-Preis a visit to Hausdorff Center of Mathematics in University
of Bonn from April 2011 to July 2011, when parts of this paper were written.
He is also grateful to his advisor Professor A. J. Berrick for many helpful
discussions and to Professor Martin Kassabov who carefully read the
manuscript and made many useful comments.

\bigskip

Department of Mathematics, National University of Singapore, Kent Ridge
119076, Singapore.

E-mail: yeshengkui@nus.edu.sg
\end{document}